\documentclass[10pt,a4paper]{article}
\usepackage[utf8]{inputenc}
\usepackage[T1]{fontenc}
\usepackage{amstext,amsfonts,a4}
\usepackage{amsmath,amssymb,amsthm}
\usepackage{hyperref}
\usepackage{graphicx}
\usepackage[ruled,vlined,algosection]{algorithm2e}
\usepackage{color}
\usepackage{fancybox}
\usepackage{pgf,tikz}
\usepackage{chngcntr}
\counterwithin{figure}{section}

\textheight 24truecm
\textwidth 16truecm
\parskip 0.0truecm
%\parskip 0.5truecm
%\parskip 1.0truecm
\abovedisplayskip=-0.8truecm
\belowdisplayskip=-0.8truecm
\hoffset=-2.0truecm
\voffset=-2.5truecm
\tolerance 1000
\parindent=0.8truecm

\newcommand*{\inalbe}{\int}
\newcommand*{\fplus}{\genfrac{}{}{0pt}{}{}{+}}
\newcommand*{\fdots}{\genfrac{}{}{0pt}{}{}{\cdots}}

\newtheorem{Thm}{Theorem}[section]
\newtheorem{Lemme}[Thm]{Lemma}
\newtheorem{Cor}[Thm]{Corollary}
\newtheorem{Prop}[Thm]{Proposition}

\newtheorem{Rem}[Thm]{Remark}
\newtheorem{Ex}[Thm]{Example}
%\newenvironment{definition}{\begin{Definition}\rm}{\end{Definition}}

%%%%%%%%%%%%%%%%
%%% Continued fractions (due to Annie Cuyt and Brigitte Verdonk):
\def\downbar#1{
\setbox10=\hbox{$#1$}
   \dimen10=\ht10 \advance\dimen10 by 2.5pt
   \ifdim \dimen10<15pt %equals approximately 0.5cm
      \advance\dimen10 by -0.5pt
      \dimen11=\dimen10
      \advance\dimen10 by 2.5pt
      \lower \dimen11
   \else \lower \ht10 \fi
   \hbox {\hskip 1.5pt \vrule height \dimen10 depth \dp10}\relax}
 \def\upbar#1{
 \setbox10=\hbox{$#1$}
    \dimen10=\ht10 \advance\dimen10 by \dp10 \advance\dimen10 by 2.5pt
    \ifdim \dimen10<15pt %equals approximately 0.5cm
       \advance\dimen10 by 2pt \fi
    \raise 2.5pt \hbox {\hskip -1.5pt \vrule height \dimen10}\relax}
\def\cfr#1#2{
 \downbar{#2} \hskip -1.5pt {\; #1 \; \over \thinspace \  #2}\upbar{#1}}
%%Anwendung: ein einzelner Term erscheint als  \cfr{a_k}{b_k}  (zum Beispiel).
%%%%%%%%%%%%%%%%%%%%%
\DeclareMathOperator{\supp}{supp}

\DeclareMathOperator{\sign}{sign}
\DeclareMathOperator{\capa}{cap}
\DeclareMathOperator{\rank}{rank}

\numberwithin{equation}{section}
%\title{Computing Markov functions of Toeplitz matrices}
%\date{}
\begin{document}
\renewcommand{\thefootnote}{\fnsymbol{footnote}}
\begin{center}
    {\LARGE\bf     On the rational approximation of Markov functions, with applications to the computation of Markov functions of Toeplitz 
    %matricesComputing Markov functions of Toeplitz 
    matrices\footnote[3]{AMS subject classifications : 15A16, 30E10, 41A20, 65D15, 65F55, 65F60. Key  words: matrix  function,  Toeplitz matrices, Markov function, rational interpolation, positive Thiele continued fractions.}}
    \\[10pt]
    by Bernhard Beckermann\footnotemark, Joanna Bisch\footnotemark[\value{footnote}],
    \footnotetext{Laboratoire Paul Painlev\'e UMR 8524, D\'epartement de Math\'ematiques, Universit\'e de Lille, F-59655 Villeneuve d'Ascq, France.  The work has been supported in part by the Labex CEMPI (ANR-11-LABX-0007-01). Corresponding author: Bernhard.Beckermann@univ-lille.fr}
    and Robert Luce\footnote[4]{Gurobi Optimization, LLC., 9450 SW Gemini Dr.
    \#90729 Beaverton, Oregon, USA.  luce@gurobi.com}
\end{center}
%	\maketitle
%	\tableofcontents
\begin{abstract}
    We investigate the problem of approximating the matrix function $f(A)$ by $r(A)$, with $f$ a Markov function, $r$ a rational interpolant of $f$, and $A$ a symmetric Toeplitz matrix. In a first step, we obtain a new upper bound for the relative interpolation error $1-r/f$ on the spectral interval of $A$.
    By minimizing this upper bound over all interpolation points, we obtain a new, simple and sharp a priori bound for the relative interpolation error. We then consider three different approaches of representing and computing the rational interpolant $r$. Theoretical and numerical evidence is given that any of these methods for a scalar argument allows to achieve high precision, even in the presence of finite precision arithmetic. We finally investigate the problem of efficiently evaluating $r(A)$, where it turns out that the relative error for a matrix argument is only small if we use a partial fraction decomposition for $r$ following Antoulas and Mayo. An important role is played by a new stopping criterion which ensures to automatically find the degree of $r$ leading to a small error, even in presence of finite precision arithmetic.
\end{abstract}	

\renewcommand{\thefootnote}{\arabic{footnote}}

	\section{Introduction and statement of the results}

The need for computing matrix functions $f(A)$ for some square matrix $A\in \mathbb R^{n\times n}$ and some function being analytic on some neighborhood of the spectrum of $A$ arises in a variety of applications, including network analysis \cite{Benzi20,Estrada10}, signal processing \cite{Shuman13}, machine learning \cite{Stoll20}, and differential equations \cite{Hochbruck10}.
%\footnote{M. Benzi and P. Boito, Matrix functions in network analysis, GAMM Mitteilungen, (2020).To appear}, \footnote{E. Estrada and D. J. Higham, Network properties revealed through matrix functions, SIAM Rev., 52 (2010), pp. 696–714}, signal processing \footnote{D.  I.  Shuman,  S.  K.  Narang,  P.  Frossard,  A.  Ortega,  and  P.  Vandergheynst, The emerging field of signal processing on graphs:  Extending high-dimensional data analysis to networks and other irregular domains, IEEE Signal Process. Mag., 30 (2013), pp. 83–98.}, machine learning \footnote{M.  Stoll, A   literature   survey   of   matrix   methods   for   data   science,   tech.   rep.,   2019. arXiv:1912.07896}, and differential equations\footnote{M.  Hochbruck  and  A.  Ostermann, Exponential  integrators,  Acta  Numerica,  19  (2010), pp. 209–286.}.
We refer the reader to \cite{higham2008} and the references therein for a detailed account on computing matrix functions for various functions $f$. In the present paper we are interested in the particular case of Markov functions, that is, the Cauchy transform of a positive measure $\mu$ with support $\supp(\mu)\subset \mathbb R$, and more precisely
\begin{equation} \label{eq_markov}
 f^{[\mu]}(z) = \inalbe \frac{d\mu(x)}{z-x}, \quad \mbox{with infinite $\supp(\mu)\subset [\alpha,\beta]$ for suitable $-\infty\leq \alpha<\beta<+\infty$.}
\end{equation}
This includes the functions $f^{[\mu]}(z)=\frac{\log(z)}{z-1}$ or $f^{[\mu]}(z)=z^\gamma$ for $\gamma \in (-1,0)$ and $\supp(\mu)=(-\infty,0]$, but also many other elementary functions, see for instance \cite{Henr77}. %\footnote{P. Henrici, Applied  and  Computational  Complex  Analysis,  Vol.  2, John Wiley \& Sons, New York, 1977.}.
In particular, elementary computations show that\footnote{This includes the limiting case $\frac{d\nu}{dx}(x) = \frac{1}{\pi \sqrt{z-\beta}}$ and $f^{[\nu]}(z)=1/\sqrt{z-\beta}$ for $\alpha \to -\infty$.}
\begin{equation} \label{sqrt}
%    f(z)=\frac{\log(z)}{z-1} , \quad
%    f(z)=z^\gamma, \quad
    f^{[\nu]}(z)=\frac{\sqrt{|\alpha|}}{\sqrt{(z-\alpha)(z-\beta)}}, \quad \mbox{with density}
     \quad \frac{d\nu}{dx}(x) =
    \frac{\sqrt{|\alpha|}}{\pi \sqrt{(x-\alpha)(\beta - x)}} \quad \mbox{on $\supp(\nu)=[\alpha,\beta]$.}
\end{equation}
The main reason for restricting ourselves to Markov functions is that many results about best rational approximants and rational interpolants are known, see for instance the first paragraph in \S\ref{sec2} and the references therein. In addition, for evaluating $r(A)$ for a rational function $r$ we can fully exploit the structure of $A$:
if $A$ is a Toeplitz matrix
\begin{equation}\label{toeplitz}
A=\begin{pmatrix}
t_{0} & t_{-1} &\ldots &\ldots & t_{-n+1}\\
t_{1} & t_{0} & t_{-1} & \ldots & t_{-n+2} \\
\vdots & t_{1} & \ddots & \ddots & \vdots \\
\vdots & & \ddots & \ddots & t_{-1} \\
t_{n-1} & \ldots & \ldots & t_{1} & t_{0}
\end{pmatrix}
\end{equation}
then using the concept of displacement rank we just need $\mathcal O(n \log^2(n))$ operations and $\mathcal O(n)$ memory requirements (the hidden constant depending on the degree), and a similar property seems to be true for matrices with hierarchical rank structure, see \S\ref{sec4} and the references therein.

To be more precise, denote by $\mathcal R_{m,n}$ the set of rational functions, with numerator degree $\leq m$, and denominator degree $\leq n$. Given $z_1,...,z_{2m}$ called interpolation points in $\mathbb C\setminus [\alpha,\beta]$, a {\itshape rational interpolant} (also sometimes called multi-point Padé approximant) $r_m^{[\mu]}$ of $f^{[\mu]}$ of type $[m-1|m]$ is a rational function in $\mathcal R_{m-1,m}$ which interpolates $f^{[\mu]}$ at $z_1,...,z_{2m}$ (in the sense of Hermite if some of the interpolation points occur with multiplicity $>1$). For Markov functions it is known that, provided that the non-real interpolation points occur in conjugate pairs, there is one and only one rational interpolant of type $[m-1|m]$, and this interpolant has $m$ simple poles in $(\alpha,\beta)$, and positive residuals. For instance, Padé approximants matching Taylor expansions at $z_1$ are a special case of rational interpolants, with $z_1=...=z_{2m}$. Also, from equi-oscillation we know that a best rational approximant in $\mathcal R_{m-1,m}$ with respect to maximum norm on some interval $[c,d]\subset (\beta,+\infty)$ is a rational interpolant of $f^{[\mu]}$. In contrast, approximating $f(A)b$ by projection on rational Krylov spaces \cite{beckermann09} gives raise to an expression $r(A)b$ where $r\in \mathcal R_{m-1,m}$ has prescribed poles, and satisfies only $m$ interpolation conditions, a so-called Padé type rational approximant.

Inspired by several authors \cite{higham2008}, in the present paper we will approximate  $f^{[\mu]}(A)$ by $r^{[\mu]}_m(A)$ for suitable interpolation points. For instance, the authors in \cite{Higham13} use Padé approximants (represented as convergents of a Stieltjes continued fraction) for fractional powers \eqref{sqrt}, combined with scaling and squaring techniques. The computation of rational interpolants $r_m^{[\mu]}$ of Markov functions $f^{[\mu]}$ is known to be delicate on a computer with finite precision arithmetic: we get instead a rational function $\widetilde r_m$ which might be far from $r^{[\mu]}_m$, depending on how to represent and to compute the interpolant. In addition, on a computer we will obtain a matrix $R$ instead of $\widetilde r_m(A)$, again due to finite precision arithmetic. We are still far from a full understanding how these different errors accumulate, and thus will be interested in the present paper mainly in the case of symmetric matrices $A$. Denote by $\mathbb E \subset (\beta,+\infty)$ a closed set containing all eigenvalues of $A$, for instance the spectral interval spanned by the smallest and largest eigenvalue of $A$. We thus are interested in the three relative errors
\begin{equation} \label{goal}
     \left\| 1 - {r_m^{[\mu]}} / {f^{[\mu]}} \right\|_{L^\infty(\mathbb E)} , \quad
     \left\| 1 - {\widetilde r_m} / {f^{[\mu]}} \right\|_{L^\infty(\mathbb E)} , \quad
     \left\| I - R \, f^{[\mu]}(A)^{-1} \right\| ,
\end{equation}
the first two being upper bounds of $\| I - {r_m^{[\mu]}}(A) {f^{[\mu]}}(A)^{-1} \|$, and $\| I - {\widetilde r_m(A)} {f^{[\mu]}}(A)^{-1} \|$, respectively. These three quantities will be discussed in \S\ref{sec2}, \S\ref{sec3}, and \S\ref{sec_experiences}, respectively.

    Let us highlight the main theoretical contributions of this paper. In Theorem~\ref{thmprincR} we suggest new error bounds in terms of Blaschke products of order $2m$ for the relative\footnote{In most papers in the literature, the authors estimate the absolute interpolation error. However, by considering the relative error we may monitor also the error of rational approximants of functions $f$ being a product of a Markov function and a rational function such as the logarithm or the square root, see Examples~\ref{Ex5.1} and~\ref{Ex5.3}.} interpolation error $\| 1 - {r_m^{[\mu]}} / {f^{[\mu]}} \|_{L^\infty(\mathbb E)}$ on subsets $\mathbb E$ of the real line for quite arbitrary $m,\alpha,\beta,z_1,...,z_{2m}$. It follows that, up to some modest constant, there is a worst case measure for the relative error given by the measure $\nu$ of \eqref{sqrt}. As a consequence of Theorem~\ref{thmprincR}, we derive in Corollary~\ref{cor_posteriori} new residual and a posteriori upper bounds for $\| I - {r_m^{[\mu]}}(A) {f^{[\mu]}}(A)^{-1} \|$ which do not require to know ${f^{[\mu]}}(A)$. Restricting ourselves to intervals $\mathbb E=[c,d]$, this allows us in Remark~\ref{rem_eta} for the Padé case $z_1=...=z_{2m}$ to find an optimal $z_1$, and in Corollary~\ref{cor_a_priori} the quasi optimal interpolation points which minimize our upper bound of Theorem~\ref{thmprincR}. In the latter case, we deduce a very simple a priori error bound of asymptotic form $8\rho^{2m}$ in terms of the logarithmic capacity of the underlying condenser, which is sharp and again seems to be new. Based on these results, we suggest in Remark~\ref{rem_stopping} a stopping criterion allowing to find automatically the $m$ leading to a small interpolation error, even in the presence of rounding errors.

    In Theorem~\ref{thmprinc2} we estimate the absolute interpolation error on the closed unit disk, which is combined with the Faber operator techniques of \cite{beckermann09} in order to construct rational functions $r\in \mathcal R_{m-1,m}$ with an explicit bound  for the error $\| f - r \|_{L^\infty(\mathbb E)}$ for compact and convex sets $\mathbb E$, such as the field of values of $A$ in case where $A$ is not symmetric. Again, optimizing the interpolation points, our bounds improve results of Knizhnerman  on Faber-Padé approximants \cite{Kniz08}.
    Finally, this paper also contains two new results on interpolating Thiele continued fractions: in Theorem~\ref{Thm_Markov} we show that (reciprocal) Markov functions give raise to an interpolating continued fraction with positive parameters, which allows us to show in Theorem~\ref{Thm_finite_precision} the backward stability of positive Thiele fractions, improving \cite[Theorem 4.1]{GM80} of Graves-Morris.

    We conclude this introduction by summarizing the structure of the paper. In the first paragraph of \S\ref{sec2} we recall several results scattered in the literature on upper bounds for rational interpolants and best rational approximants of Markov functions. We then state and prove our new bounds for the interpolation error on subsets of the real line in \S\ref{sec2.1}, and on the unit disk in \S\ref{sec2.2}. In order to monitor the second term in \eqref{goal}, we will discuss in \S\ref{sec3} three different ways of representing and computing $r_m^{[\mu]}$, namely in \S\ref{sec3.1} a partial fraction decomposition, %the poles being eigenvalues of some Loewner matrix pencil, and the residuals from solving the interpolation conditions in the least-square sense.
    in \S\ref{sec3.2} %we consider
    an interpolating barycentric representation of $r_m^{[\mu]}$, and in \S\ref{sec3.3} a Thiele interpolating continued fraction, which generalizes the above-mentioned Stieltjes continued fraction to arbitrary distinct and real interpolation points. %Our main new contributions in this section are Theorem~\ref{Thm_Markov} where we show that (reciprocal) Markov functions give raise to an interpolating continued fraction with positive parameters, which allows us to show in Theorem~\ref{Thm_finite_precision} the backward stability of positive Thiele fractions, improving \cite[Theorem 4.1]{GM80} of Graves-Morris.
    We %finally
    give in Figures~\ref{fig_representation1} and \ref{fig6}--\ref{fig8} numerical evidence that we may reach nearly machine precision for the error $\| 1 - {\widetilde r_m} / {f^{[\mu]}}\|_{L^\infty(\mathbb E)}$ in \eqref{goal} for any of the three representations $\widetilde r_m$ of $r_m^{[\mu]}$, if we use the stopping criterion of Remark~\ref{rem_stopping}.

    In \S\ref{sec4} we provide more information how to evaluate $\widetilde r_m(A)$ for a (symmetric) Toeplitz matrix $A$, using the concept of small displacement rank. In particular, we show in Theorem~\ref{Thm_computation} the above claimed complexity $\mathcal O(n \log^2(n))$ and memory requirements $\mathcal O(n)$. Finally, in \S\ref{sec_experiences} we give numerical experiments, and investigate also possible improvements through a combination with scaling and squaring for particular functions. We conclude that only a representation of $r_m^{[\mu]}$ as a partial fraction decomposition allows to attain small errors, if we want to exploit the Toeplitz structure of $A$.

\section{The error of rational interpolants of Markov functions}\label{sec2}

The aim of this section is to estimate the error of rational interpolants $r_m^{[\mu]}$ of type $[m-1|m]$ of a Markov function $f^{[\mu]}$ of a measure $\mu$ with support in
$[\alpha,\beta]$, both on a real set $\mathbb E \subset \mathbb R\setminus [\alpha,\beta]$ as for instance a real interval in our Theorem~\ref{thmprincR}, and on the unit disk in our Theorem~\ref{thmprinc2}. We are less interested in asymptotic results on the error, and refer the interested reader to the work of Gonchar \cite{Gonch78}  and the book of Stahl and Totik \cite{Tot92} for $m$th root asymptotics, the work of L\'opez Lagomasino \cite{Lago86,Lago87}
%\footnote{G. L\'opez Lagomasino, On the asymptotics of the ratio of orthonormal polynomials and the convergence of multipoint Padé approximants, Math. USSR Sbornik, 56 (1987), 216–229. and G. L\'opez Lagomasino, Szego's Theorem for polynomials orthogonal with respect to varying measures, Orthogonal Polynomials and their Applications, Lect. Notes in Math., Springer-Verlag, 1329 (1986), 255–260.}
on ratio asymptotics, and the work of Stahl \cite{Stahl00}
%\footnote{H. Stahl, Strong asymptotics for orthonormal polynomials with varying weights. Acta Sci. Math. (Szeged) 66 (2000), no. 1-2, 147–192.}
on strong asymptotics, though some of the tools in these papers are also of help for deriving upper bounds. In this paper we want to derive upper bounds of the form $C\rho^{2m}$, where the constants $C,\rho$ only depend on $[\alpha,\beta]$ and the interpolation points, but not on the regularity of $\mu$. Previous work on this subject include error estimates for Padé approximants of Markov functions, see, e.g., the book of Baker and Graves-Morris \cite[Thm 5.2.6 and Thm 5.4.4]{BGM96}. We are only aware of work of Ganelius \cite[Chap. 4]{Gan82} and Braess \cite[Thm~2.1]{Braess87} on upper bounds of the form $C\rho^{2m}$. These authors look for particularly well-chosen interpolation points which allow to make the link with best rational approximants. Also we should mention the more recent work of Knizhnerman \cite[Part.2, Section 3.1]{Kniz08} on Faber Padé approximants, who does not give an explicit value of $C$. Our aim is to improve the constant $C$ in all these findings. Also, we want to prove the claim in \cite{beckermann09} that for rational interpolants with free poles we should get the square of the bounds of \cite{beckermann09} and \cite{Massei21} %\footnote{\textcolor{red}{Stefano Massei \& Leonardo Robol, Rational Krylov for Stieltjes matrix functions: convergence and pole selection, BIT Numerical Mathematics volume 61, pages 237–273 (2021).}}
in terms of (minimal) Blaschke products obtained for rational interpolants with prescribed poles. %Here our Theorem~\ref{thmprincR} treats with the interval case, and Theorem~\ref{thmprincR} discusses the unit disk case.

\subsection{Estimates on the real line}\label{sec2.1}
We start with the interval case, where we allow for more general real or complex conjugate %distinct
interpolation points $z_1,...,z_{2m}\in \mathbb C\setminus [\alpha,\beta]$ and estimate the relative interpolation error. The comparison principle \cite[Lemma V.3.8 and Thm V.3.9]{bra86} allows to relate absolute errors for rational interpolants of Markov functions for two measures $\mu\leq \nu$. The situation is different for relative errors since, as we show in the next theorem, there is (up to a factor 2 or 3) a worst case measure given by the scaled equilibrium measure of the interval $[\alpha,\beta]$ (and thus neither depending on the choice of $\mathbb E$ nor on the interpolation points). We also give upper bounds in terms of Blaschke products.

\begin{Thm}\label{thmprincR}
    Let $-\infty\leq \alpha<\beta<\infty$, and let the Markov functions $f^{[\mu]}$ and $f^{[\nu]}$ be as in \eqref{eq_markov} and \eqref{sqrt}.
    Furthermore, let  $\mathbb E \subset \mathbb R\setminus [\alpha,\beta]$, and consider interpolation points $z_1,...,z_{2m}\in\mathbb C\setminus [\alpha,\beta]$ where we suppose that non-real points only occur in conjugate pairs. We refer to the positive case if the real interpolation points have even multiplicity.\footnote{If $\mathbb E$ is a finite union of closed intervals, it is sufficient to suppose that interpolation points in $Int(\mathbb E)$ only have even multiplicity, and there is an even number of interpolation points in any subinterval of $\mathbb R \setminus Int(\mathbb E)$.} Then for the interpolant $r_m^{[\mu]}$ of type $[m-1|m]$ of $f^{[\mu]}$ we may bound the relative error as follows
	\begin{equation}\label{equaprinc}
	\|\frac{f^{[\mu]}-r_{m}^{[\mu]}}{f^{[\mu]}}\|_{L^{\infty}(\mathbb E)}
	\leqslant
    \left\{\begin{array}{ll}
      2 \|\frac{f^{[\nu]}-r_{m}^{[\nu]}}{f^{[\nu]}}\|_{L^{\infty}(\mathbb E)}
      \leq 4 \eta_{2m} & \mbox{in the positive case,}
      \\
      \| 1- \Bigl(\frac{r_{m}^{[\nu]}}{f^{[\nu]}}\Bigr)^2 \|_{L^{\infty}(\mathbb E)}
      \leq
      4 \frac{\eta_{2m}}{(1-\eta_{2m})^2} & \mbox{in the general case,}
    \end{array}\right.
	\end{equation}
    where
%    $\frac{d\nu}{dx}$ denotes the normalized equilibrium measure on the interval $[\alpha,\beta]$ and $f^{[\nu]}$ the corresponding Markov function given in \eqref{sqrt}, %with Markov function $f^{[\nu]}(z)=\frac{\sqrt{|\alpha|}}{\sqrt{(z-\alpha)(z-\beta)}}$,   and
    $$
         \eta_{2m}=\max_{z\in\mathbb E}|G_{2m}(z)|, \quad G_{2m}(z)=\prod_{j=1}^{2m} \frac{\varphi(z)-\varphi(z_{j})}{1-\varphi(z)\varphi(z_{j})}
    $$
    with $\varphi$ mapping conformally $\overline{\mathbb{C}}\backslash [\alpha,\beta]$ onto the complement of the closed unit disk.
    %These bounds are attaint up to a factor $2$ (??) for the particular Markov function $f_0$ of \eqref{}.
\end{Thm}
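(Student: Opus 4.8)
The plan is to reduce everything to a single scalar identity for the relative error of a rational interpolant of a Markov function, and then to compare an arbitrary measure $\mu$ with the extremal measure $\nu$ of \eqref{sqrt}. First I would recall the standard error formula: if $r_m^{[\mu]}$ interpolates $f^{[\mu]}$ at $z_1,\dots,z_{2m}$, then for $z\in\mathbb C\setminus[\alpha,\beta]$ one has
\[
  f^{[\mu]}(z)-r_m^{[\mu]}(z)
  = \int \frac{w_{2m}(z)}{w_{2m}(x)}\,\frac{1}{q_m(z)}\,\frac{q_m(x)}{z-x}\,d\mu(x),
\]
where $w_{2m}(z)=\prod_{j}(z-z_j)$ and $q_m$ is the (monic) denominator of $r_m^{[\mu]}$, which for a Markov function is orthogonal with respect to $d\mu(x)/w_{2m}(x)$ on $[\alpha,\beta]$ (this is exactly the ``one and only one interpolant with $m$ simple poles in $(\alpha,\beta)$ and positive residuals'' fact quoted in the introduction). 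Dividing by $f^{[\mu]}(z)=\int d\mu(x)/(z-x)$ gives
\[
  1-\frac{r_m^{[\mu]}(z)}{f^{[\mu]}(z)}
  = \frac{\displaystyle\int \frac{w_{2m}(z)}{w_{2m}(x)}\,\frac{q_m(x)^2}{(z-x)\,q_m(z)^2}\,d\mu(x)}
         {\displaystyle\int \frac{1}{z-x}\,d\mu(x)},
\]
after using orthogonality to symmetrize $q_m(x)q_m(z)$ into $q_m(x)^2$ in the numerator (the cross terms integrate to zero). The key point is that the ``kernel'' $K(z,x)=\dfrac{w_{2m}(z)}{w_{2m}(x)}\dfrac{q_m(x)^2}{q_m(z)^2}$ has, after passing through $\varphi$, modulus exactly $|G_{2m}(z)|$ times a function of $x$ that is nonnegative for $z\in\mathbb E$ in the positive case; this is where the Blaschke product $G_{2m}$ and the condition that real interpolation points have even multiplicity enter.

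The heart of the argument is the comparison with $\nu$. I would first prove the Blaschke bound for $\nu$ itself: for the extremal measure, $q_m^{[\nu]}$ and $w_{2m}$ combine with $\varphi$ so cleanly that $1-r_m^{[\nu]}/f^{[\nu]}$ can be written, via the substitution $x\mapsto\varphi(x)$ on the two sides of the cut, as an explicit Cauchy-type integral over the unit circle of a positive kernel against $|G_{2m}|$; evaluating it yields $|1-r_m^{[\nu]}(z)/f^{[\nu]}(z)|\le \dfrac{\eta_{2m}}{?}$ — more precisely, a Cauchy–Schwarz / positivity estimate on the circle gives $\big|1-(r_m^{[\nu]}/f^{[\nu]})^2\big|\le \dfrac{4\eta_{2m}}{(1-\eta_{2m})^2}$ and, in the positive case where $G_{2m}$ does not change sign, the sharper $\big|1-r_m^{[\nu]}/f^{[\nu]}\big|\le 2\eta_{2m}$. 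Then, for general $\mu$, I would bound the $\mu$-integral in the numerator above against the $\nu$-integral: the ratio of the $\mu$-kernel to the $\nu$-kernel is controlled because, in the positive case, the kernel $K(z,x)$ as a function of $x$ on $[\alpha,\beta]$ is, up to the constant $\eta_{2m}$-type factor, bounded by the corresponding quantity for $\nu$, with the loss being only the factor $2$; this uses that $d\nu/dx = \sqrt{|\alpha|}/(\pi\sqrt{(x-\alpha)(\beta-x)})$ is (a constant multiple of) the equilibrium density, which dominates the behaviour of the kernel near the endpoints. In the general case the same comparison costs an extra factor coming from $(1-\eta_{2m})^{-2}$ because $G_{2m}$ may vanish or change sign in $\mathbb E$, forcing us to square.

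Concretely, the steps in order are: (1) write down the interpolation error formula and its relative version with the symmetrized kernel $q_m(x)^2$; (2) change variables via $\varphi$ to expose $G_{2m}$ and reduce the $z$-dependence to $|G_{2m}(z)|\le\eta_{2m}$; (3) handle $\nu$ directly by an explicit computation on the unit circle, obtaining the bounds $2\eta_{2m}$ resp.\ $\eta_{2m}/(1-\eta_{2m})^2$ with the correct constants; (4) prove the pointwise comparison $|1-r_m^{[\mu]}/f^{[\mu]}|\le 2\,|1-r_m^{[\nu]}/f^{[\nu]}|$ in the positive case, and the squared analogue $\le |1-(r_m^{[\nu]}/f^{[\nu]})^2|$ in the general case, by bounding the $\mu$-kernel against the $\nu$-kernel; (5) take the supremum over $\mathbb E$ and assemble \eqref{equaprinc}. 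I expect step (4), the measure-independence of the worst case, to be the main obstacle: one must show that the extremal density $d\nu/dx$ really dominates the relevant kernel uniformly in $x\in[\alpha,\beta]$ and in the interpolation points, and that this costs only the constant $2$ (or $3$). The natural tool is to bound $\big|\int h(x)\,d\mu(x)\big| \le \big(\sup |h|/(d\nu/dx)\big)\cdot \nu([\alpha,\beta])$ type estimates is too lossy; instead one should compare the two integrals after the $\varphi$-substitution, where the $\nu$-integral becomes a plain arc-length integral on the circle and the $\mu$-integral becomes an integral against a positive measure of total mass controlled by $f^{[\mu]}$, so that the ratio telescopes. The footnoted refinement for $\mathbb E$ a union of intervals would follow by the same argument applied componentwise, tracking the sign of $G_{2m}$ on each piece.
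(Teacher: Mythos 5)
Your steps (1)--(3) match the paper's setup (Gonchar's integral formula with the orthogonal denominator, the change of variables via $\varphi$, and the explicit treatment of $\nu$ through the Blaschke product). The proof founders, however, at the point you yourself flag as the main obstacle, step (4), and the gap is real: you propose to prove $|1-r_m^{[\mu]}/f^{[\mu]}|\le 2\,|1-r_m^{[\nu]}/f^{[\nu]}|$ by a pointwise comparison of the two kernels $\tfrac{\omega(z)}{\omega(x)}\tfrac{q_m(x)^2}{q_m(z)^2}$ built from the respective orthogonal denominators, but $q_m^{[\mu]}$ and $q_m^{[\nu]}$ are orthogonal with respect to \emph{different} measures and there is no uniform domination of one kernel by the other (indeed the paper explicitly notes that the comparison principle of Braess works for absolute errors with $\mu\le\nu$ but fails for relative errors, which is why this worst-case statement is new). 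Your assertion in step (2) that the kernel involving $q_m^{[\mu]}$ has ``modulus exactly $|G_{2m}(z)|$ times a nonnegative function of $x$'' is also false for a general $\mu$; that factorization is a property of the weighted Chebyshev polynomial attached to $[\alpha,\beta]$ and $\omega$, not of the $\mu$-orthogonal polynomial.

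The idea you are missing is the variational characterization of the error: by Szeg\H{o}'s extremal property of orthogonal polynomials, for real $z\notin[\alpha,\beta]$ one has
\begin{equation*}
|f^{[\mu]}(z)-r_m^{[\mu]}(z)|=\min_{\deg Q\le m}\frac{|\omega(z)|}{Q^2(z)}\int\frac{Q^2(x)}{\omega(x)}\frac{d\mu(x)}{|z-x|},
\end{equation*}
i.e.\ the error is a \emph{minimum over all} admissible $Q$, not merely the value at $Q=q_m^{[\mu]}$. Once this is in hand, the $\mu$-dependence collapses trivially: bounding $Q^2/\omega$ by its sup-norm on $[\alpha,\beta]$ leaves exactly $\int d\mu(x)/|z-x|=|f^{[\mu]}(z)|$, so the relative error is bounded by the measure-free quantity $\min_Q |\omega(z)/Q^2(z)|\,\|Q^2/\omega\|_{L^\infty([\alpha,\beta])}$. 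One then chooses $Q$ to be the weighted Chebyshev polynomial (your Blaschke construction), whose extremal value is $4|G_{2m}(z)|/(1+G_{2m}(z))^2$, and finally identifies $(1-G_{2m})/(1+G_{2m})$ with $r_m^{[\nu]}/f^{[\nu]}$ to obtain the stated bounds, with the factor $2$ (positive case) or the squaring (general case) coming from elementary manipulations of $G_{2m}\in[0,1)$ versus $G_{2m}\in(-1,1)$. Without the minimization step your route requires a kernel/density comparison between two distinct orthogonal polynomial systems that you do not supply and that does not hold in the pointwise form you need.
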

\begin{proof}
    Define
    \begin{equation}\label{equaomega}
             \omega(z)=\pm \prod_{j=1}^{2m}(z-z_{j}) .
    \end{equation}
    where by assumption on $z_1,...,z_{2m}$ the function $\omega$ is real-valued on the real axis and different from zero in $[\alpha,\beta]$, we hence may fix the sign such that $\omega(z)>0$ for $z\in[\alpha,\beta]$. It was probably Gonchar in \cite{Gonch78} who observed first that the denominator $Q_m$ of the interpolant $r_m^{[\mu]}$ of our Markov function $f^{[\mu]}$ is necessarily a scalar multiple of an $m$th orthonormal polynomial with respect to the measure $d\mu/\omega$, in particular the rational interpolant $r_m^{[\mu]}$ exists, is unique, and has $m$ simple poles in $(\alpha,\beta)$, with positive residuals, see also \cite[Lemma~6.1.2]{Tot92}. Gonchar also gave the integral formula %\eqref{equaerrorthog}
    \begin{equation} \label{equaerrorthog}
       \forall z \in \mathbb C\setminus [\alpha,\beta]: \quad f^{[\mu]}(z)-r_m^{[\mu]}(z)
       =
         \frac{\omega(z)}{Q_{m}^{2}(z)} \inalbe \frac{Q_{m}^{2}(x)}{\omega(x)}\frac{d\mu(x)}{z-x}        .
     \end{equation}
     In our approach we use two polynomial extremal problems: we claim that
     \footnote{In other approaches like in \cite[Section~6.1]{Tot92} the authors eliminate the term $1/(z-x)$ in the integral leading to bounds for the absolute error.}
     \begin{eqnarray}
       \forall z \in \mathbb R\setminus [\alpha,\beta]:  \quad
       |f^{[\mu]}(z)-r^{[\mu]}_m(z)| &=& \label{equaborn}
        \min_{\deg Q \leq m} \frac{|\omega(z)|}{Q_{}^{2}(z)} \inalbe \frac{Q_{}^{2}(x)}{\omega(x)}\frac{d\mu(x)}{|z-x|}
       \\&\leq& \label{minpoly}
       |f^{[\mu]}(z)| \, \min_{\deg Q \leq m} \frac{|\omega(z)|}{Q_{}^{2}(z)} \, \| \frac{Q_{}^{2}}{\omega} \|_{L^\infty([\alpha,\beta])}.
    \end{eqnarray}
    Indeed, for any $z\in \mathbb R\setminus \supp(\mu)$ it follows from \cite[Thm 3.1.3 and 3.1.4]{Sze75}
     that the denominator $Q_m$ is extremal for the extremal problem on the right-hand side of \eqref{equaborn}, and hence the claimed equality \eqref{equaborn} follows from \eqref{equaerrorthog}, whereas inequality \eqref{minpoly} is a trivial consequence of \eqref{equaborn} and of the fact that $\supp(\mu)\subset [\alpha,\beta]$ by \eqref{eq_markov}.
     It remains to solve the $L^\infty$ extremal problem on the right-hand side of \eqref{minpoly}, and again we will see that we get the same extremal polynomial for all $z\in \mathbb R \setminus [\alpha,\beta]$, namely the weighted Chebyshev polynomial. By the theory of best approximation and the Chebyshev theorem \cite[Section 4.1 and 4.4]{Mein67}, all we have to do is to find a polynomial $P$ of degree $\leq m$ such that $P(x)/\sqrt{\omega(x)}$ is of modulus $\leq 1$ on $[\alpha,\beta]$, and takes $m+1$ times in $[\alpha,\beta]$ alternately the values $1$ and $-1$.

    We first show that it is sufficient to consider the interval $[-1,1]$. Let $T\in \mathcal R_{1,1}$ be a Moebius transform with $T([-1,1])=[\alpha,\beta]$ and $T(\mathbb R)=\mathbb R$, and define $w=\varphi(z)$ by the formula $z=T(\tfrac{1}{2}(w+\tfrac{1}{w}))$, then $\varphi$ is a conformal bijection of the exterior of the interval $[\alpha,\beta]$  onto the exterior of the unit disk.\footnote{The interested reader may observe that we do not impose a normalization condition, and hence neither $T$ nor $\varphi$ are unique, though the function $G_{2m}$ can be shown to be unique, see also Remark~\ref{rem_eta}.} For any polynomial $p$ of degree $\leq m$ we find a polynomial $P$ of degree $\leq m$ such that
    \begin{equation} \label{eq_change_variables}
            \frac{P(T(y))}{\sqrt{\omega(T(y))}} = \frac{p(y)}{\sqrt{\rho(y)}} , \quad
            \mbox{where} \quad \rho(y) = \pm \prod_{j=1}^{2m}(y-T^{-1}(z_{j}))
    \end{equation}
    and, as in \eqref{equaomega}, the sign is chosen such that $\rho>0$ in $[-1,1]$.
    Especially, with $p$, also  $P$ has the desired oscillatory behavior. It remains to construct $p$, which is explained in \cite[Section 4.4]{Mein67} if $\omega$ is a square of a polynomial (the case of points of even multiplicity) but easily extends to our more general setting.
    % \footnote{Il faudra peut-etre mieux chercher si cette formule n'existe pas quelque part dans la litterature}.
    We may factorize
    $$
          \rho(\frac{1}{2}(w+\frac{1}{w})) = H(w)H(\frac{1}{w}) ,
          \quad H(w)=\sum_{k=0}^{2m} H_k w^k = H_{2m} \prod_{j=1}^{2m} (w-w_j) , \quad |w_j|>1
    $$
    where $\frac{1}{2}(w_j+\frac{1}{w_j}) = T^{-1}(z_{j})$, in other words, $w_j=\varphi(z_j)$ for $j=1,...,2m$. Since $w^\ell + w^{-\ell}$ is a polynomial of degree $\ell$ of $w+w^{-1}$ for $\ell=0,...,m$, we conclude that $p$ defined by
    $$
         p(\frac{1}{2}(w+\frac{1}{w}) = \frac{1}{2} ( w^{-m}H(w)+w^m H(\frac{1}{w}) )
    $$
    is a polynomial of degree $\leq m$. Introduce the Blaschke product
    $$
         B(w) = \frac{w^{2m} H(\frac{1}{w})}{H(w)} = \prod_{j=1}^{2m} \frac{1-w_j w}{w-w_j} = \frac{1}{G_{2m}(\varphi^{-1}(w))}
    $$
    having all its zeros in $\mathbb D$, non-real zeros occurring in conjugate pairs.
    Then for $x=\cos(t)$ and $w=e^{it}$ we have that
    $$
           \frac{w^{-m}H(w)}{|w^{-m}H(w)|} = e^{-is} , \quad
           \frac{w^{m}H(1/w)}{|w^{m}H(1/w)|} = e^{is} , \quad B(w)= e^{2is},
    $$
    and hence $p(\cos(t))/\sqrt{\rho(\cos(t))} = \cos(s)$.
    However, for a Blaschke product as above, we know that with $t\in [0,\pi]$, $2s$ runs trough the interval $[0,2m\pi]$, leading to the desired oscillatory behavior. To summarize, we have shown that
    \begin{equation} \label{Q_extremal}
        \min_{\deg Q \leq m} \| \frac{|\omega|}{Q_{}^{2}} \|_{L^\infty(\mathbb E)} \, \| \frac{Q_{}^{2}}{\omega} \|_{L^\infty([\alpha,\beta])}
        = \max_{x\in \mathbb E} \min_{\deg Q \leq m} | \frac{\omega(x)}{Q_{}^{2}(x)} | \, \| \frac{Q_{}^{2}}{\omega} \|_{L^\infty([\alpha,\beta])}
        = \max_{x\in \mathbb E} \frac{4|G_{2m}(x)|}{(1+G_{2m}(x))^2},
    \end{equation}
    with $G_{2m}(x)\in (-1,1)$ for $x\in \mathbb E$ in the general case. In the positive case we know in addition that $G_{2m}(\beta)=1>0$, $G_{2m}$ has an even number of sign changes in any subinterval of $\mathbb R\setminus Int(\mathbb E)$, and only zeros with even multiplicities in $Int(\mathbb E)$. Hence $G_{2m}(x)\in [0,1)$ for $x\in \mathbb E$ in the positive case. We still need to show that we may express rational interpolants of the Markov function $f^{[\nu]}$ in terms of $G_{2m}$.  With the same change of variables as above, $w=\varphi(z)$, $z=T(y)$, $y=(w+1/w)/2$, we claim that there exist rational functions $r,R\in \mathcal R_{m-1,m}$ such that, for $z\not\in [\alpha,\beta]$,
    \begin{equation} \label{G_value}
        \frac{1-G_{2m}(z)}{1+G_{2m}(z)} = \sqrt{y^2-1} r(y) = \frac{R(z)}{f^{[\nu]}(z)}.
    \end{equation}
    Here the first identity is obtained by taking the above $p(y)$ as denominator, and the second is left to the reader. Observing that the left-hand side of \eqref{G_value} equals one iff $z\in \{ z_1,...,z_{2m}\}$, we conclude that $r(y)$ is the rational interpolant of type $[m-1|m]$ of $1/\sqrt{y^2-1}$ at the nodes $y_j=T^{-1}(z_j)$,
    and $R(z)=r_{m}^{[\nu]}(z)$ is the rational interpolant of type $[m-1|m]$ of $f^{[\nu]}(z)$ at the nodes $z_j$. In particular, for $z\in \mathbb E$,
    \begin{eqnarray*} &&
         \frac{4 | G_{2m}(z)|}{(1+G_{2m}(z))^2}
         = | 1- \Bigl(\frac{r_{m}^{[\nu]}(z)}{f^{[\nu]}(z)}\Bigr)^2 |
         \leq \frac{4 \eta_{2m}}{(1-\eta_{2m})^2} \quad \mbox{in the general case},
         \\
         &&
         \frac{4 | G_{2m}(z)|}{(1+G_{2m}(z))^2} \leq
         \frac{4 \, | G_{2m}(z)|}{1+G_{2m}(z)} \leq
         2 \, | 1- \frac{r_{m}^{[\nu]}(z)}{f^{[\nu]}(z)} |
         \leq 4 \eta_{2m}
         \quad \mbox{in the positive case}.
    \end{eqnarray*}
    Combining with \eqref{minpoly} and \eqref{Q_extremal}, we arrive at the conclusion \eqref{equaprinc} of Theorem~\ref{thmprincR}.
\end{proof}

In a later section, it will be necessary to estimate the relative error $1-r_m^{[\mu]}/f^{[\mu]}$ at a matrix argument $A$, without computing explicitly $f^{[\mu]}(A)$. We suggest two bounds, the first one
following directly from Theorem~\ref{thmprincR} and the observation that $(f^{[\nu]}(A))^{-2}$ is easy to evaluate, leading to a kind of residual for the inverse square root. The second approach, based on a generalization of Theorem~\ref{thmprincR}, states that the relative error does not change much if one replaces $f^{[\mu]}$ by $r_{m+m'}^{[\mu]}$ for a modest value of $m'$. Such a trick was also applied as a heuristic in error estimates for matrix functions times a vector, especially for the case $f(z)=1/z$ of solving systems on linear equations, see \cite{Golub10} and the references therein. %\footnote{\textcolor{red}{G. H. Golub and G. Meurant, Matrices, Moments and Quadrature with Applications, Princeton University Press, 2010}}
\begin{Cor}\label{cor_posteriori}
    With the notations of $Theorem~\ref{thmprincR}$, let $A$ be a symmetric matrix with spectrum $\mathbb E=\sigma(A)$ out of $[\alpha,\beta]$. Then we have the {\itshape residual bound}
    $$
         \| I - r_{m}^{[\mu]}(A) \Bigl(f^{[\mu]}(A)\Bigr)^{-1} \|
         \leq \| I - r_{m}^{[\nu]}(A)^2 \frac{1}{|\alpha|}(A-\alpha I)(A-\beta I) \| .
    $$
    If in addition $\eta_{2m}\leq (\sqrt{2}-1)^2$, and
    $$
            \delta := \frac{4\widetilde \eta}{(1-\widetilde \eta)^2} \in (0,1), \quad
            \widetilde \eta := \max_{z\in \mathbb E} \Bigl| \prod_{j=2m+1}^{2m+2m'}
            \frac{\varphi(z)-\varphi(z_j)}{1-\varphi(z)\varphi(z_j)} \Bigr|,
    $$
    then we have the {\itshape a posteriori bound}
    $$
         \| I- r_{m}^{[\mu]}(A) \Bigl(f^{[\mu]}(A)\Bigr)^{-1} \| \leq
         \frac{1+\delta}{1-\delta}
         \| I- r_{m}^{[\mu]}(A) \Bigl(r_{m+m'}^{[\mu]}(A)\Bigr)^{-1} \| .
    $$
\end{Cor}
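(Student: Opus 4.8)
\emph{Proof plan.} Throughout I would use that $A$ symmetric with $\sigma(A)=\mathbb E\subset(\beta,+\infty)\subset\mathbb R\setminus[\alpha,\beta]$ makes $f^{[\mu]},r_m^{[\mu]},r_{m+m'}^{[\mu]}$ analytic near $\mathbb E$ with $f^{[\mu]}>0$ there, so that $f^{[\mu]}(A)$ is symmetric and invertible, every matrix below is a function of $A$ (hence they commute), and $\|g(A)\|=\|g\|_{L^\infty(\mathbb E)}$ for each such $g$. For the residual bound I would merely make the pointwise steps of the proof of Theorem~\ref{thmprincR} explicit, stopping just before the passage to $\eta_{2m}$: combining \eqref{minpoly}, the last identity of \eqref{Q_extremal} taken at the single point $x=z$, and \eqref{G_value} gives, for all $z\in\mathbb R\setminus[\alpha,\beta]$,
\begin{equation*}
 \Bigl|1-\frac{r_m^{[\mu]}(z)}{f^{[\mu]}(z)}\Bigr|\;\le\;\frac{4|G_{2m}(z)|}{(1+G_{2m}(z))^2}\;=\;\Bigl|1-\Bigl(\frac{r_m^{[\nu]}(z)}{f^{[\nu]}(z)}\Bigr)^{2}\Bigr|.
\end{equation*}
By \eqref{sqrt} one has $(f^{[\nu]}(z))^{2}=|\alpha|/((z-\alpha)(z-\beta))$, so the right-hand side equals $\bigl|1-r_m^{[\nu]}(z)^2(z-\alpha)(z-\beta)/|\alpha|\bigr|$, a rational function with poles only in $(\alpha,\beta)$; evaluating at the eigenvalues of $A$ and maximising yields $\|I-r_m^{[\nu]}(A)^2\tfrac{1}{|\alpha|}(A-\alpha I)(A-\beta I)\|$, the claimed residual bound.

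For the a posteriori bound I would first identify $r_{m+m'}^{[\mu]}$ with a Markov function: by the Gonchar characterisation recalled just after \eqref{equaomega}, $r_{m+m'}^{[\mu]}$ has $m+m'$ simple poles in $(\alpha,\beta)$ with positive residues, hence $r_{m+m'}^{[\mu]}=f^{[\mu']}$ for an atomic positive measure $\mu'$ on $(\alpha,\beta)$ with $m+m'\ge m+1$ atoms. As $r_m^{[\mu]}$ and $r_{m+m'}^{[\mu]}=f^{[\mu']}$ both interpolate $f^{[\mu]}$ at $z_1,\dots,z_{2m}$, uniqueness of the type $[m-1|m]$ interpolant of $f^{[\mu']}$ (valid since $\mu'$ has $\ge m+1$ atoms, so the $m$-th $d\mu'/\omega$-orthonormal polynomial exists) forces $r_m^{[\mu]}=r_m^{[\mu']}$, whence $1-r_m^{[\mu]}/r_{m+m'}^{[\mu]}=1-r_m^{[\mu']}/f^{[\mu']}$. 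The bound $4\eta_{2m}/(1-\eta_{2m})^2$ of Theorem~\ref{thmprincR} only uses existence of the relevant orthonormal polynomial, so it applies to $\mu'$ as well; and Theorem~\ref{thmprincR} applied to $\mu$ with all $2m+2m'$ nodes gives error $4\eta_{2m+2m'}/(1-\eta_{2m+2m'})^2$ with $\eta_{2m+2m'}\le\eta_{2m}\,\widetilde\eta\le\widetilde\eta$ (since $G_{2m+2m'}=G_{2m}\prod_{j=2m+1}^{2m+2m'}\tfrac{\varphi(z)-\varphi(z_j)}{1-\varphi(z)\varphi(z_j)}$ and $|G_{2m}|\le\eta_{2m}\le1$ on $\mathbb E$), so by monotonicity of $t\mapsto4t/(1-t)^2$ on $[0,1)$,
\begin{equation*}
 \Bigl\|1-\frac{r_m^{[\mu]}}{r_{m+m'}^{[\mu]}}\Bigr\|_{L^\infty(\mathbb E)}\le\frac{4\eta_{2m}}{(1-\eta_{2m})^2}\le1,\qquad\Bigl\|1-\frac{r_{m+m'}^{[\mu]}}{f^{[\mu]}}\Bigr\|_{L^\infty(\mathbb E)}\le\frac{4\widetilde\eta}{(1-\widetilde\eta)^2}=\delta,
\end{equation*}
the first chain because $\eta_{2m}\le(\sqrt2-1)^2$ is exactly equivalent to $4\eta_{2m}/(1-\eta_{2m})^2\le1$ (note $1-(\sqrt2-1)^2=2(\sqrt2-1)$). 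Writing $D:=I-r_m^{[\mu]}(A)(r_{m+m'}^{[\mu]}(A))^{-1}$ and $E:=I-r_{m+m'}^{[\mu]}(A)(f^{[\mu]}(A))^{-1}$ (so $\|D\|\le1$, $\|E\|\le\delta$, and they commute), the factorisation $r_m^{[\mu]}(A)(f^{[\mu]}(A))^{-1}=(I-D)(I-E)$ gives the exact identity $I-r_m^{[\mu]}(A)(f^{[\mu]}(A))^{-1}=D+(I-D)E$.

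The main obstacle is that the crude estimate $\|D+(I-D)E\|\le(1+\delta)\|D\|+\delta$ is too weak: it beats $\tfrac{1+\delta}{1-\delta}\|D\|$ only when $\|D\|\ge\tfrac{1-\delta}{1+\delta}$, whereas $\|D\|$ may be arbitrarily small. What is needed is a genuinely \emph{relative} estimate, namely a pointwise inequality $|1-r_{m+m'}^{[\mu]}(z)/f^{[\mu]}(z)|\lesssim\delta\,|1-r_m^{[\mu]}(z)/r_{m+m'}^{[\mu]}(z)|$ on $\mathbb E$ — the generalisation of Theorem~\ref{thmprincR} alluded to in the text. I expect it to follow from applying Gonchar's formula \eqref{equaerrorthog} to both $f^{[\mu]}-r_{m+m'}^{[\mu]}$ (weight $\prod_{j=1}^{2m+2m'}(z-z_j)$) and $r_{m+m'}^{[\mu]}-r_m^{[\mu]}=f^{[\mu']}-r_m^{[\mu']}$ (weight $\prod_{j=1}^{2m}(z-z_j)$): the surplus nodes $z_{2m+1},\dots,z_{2m+2m'}$ factor out as a common Blaschke product of sup-norm $\le\widetilde\eta$ on $\mathbb E$, and the remaining ratio is controlled by the same weighted-Chebyshev extremal problem as in \eqref{Q_extremal}, producing the factor $\delta$. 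Granting this and using $|D(z)|\le1$, one obtains pointwise $|D(z)+(1-D(z))E(z)|\le|D(z)|(1+\delta+\delta|D(z)|)\le(1+2\delta)|D(z)|\le\tfrac{1+\delta}{1-\delta}|D(z)|$, and taking the maximum over $z\in\sigma(A)$ finishes the proof. Everything apart from this relative estimate is routine bookkeeping on top of Theorem~\ref{thmprincR}.
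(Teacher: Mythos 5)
Your treatment of the residual bound is correct and is exactly the paper's ``immediate application of Theorem~\ref{thmprincR}''. For the a posteriori bound, however, there is a genuine gap: the entire substance of the paper's proof is precisely the relative estimate that you defer with ``I expect it to follow from\dots'' and ``Granting this\dots''. The paper obtains it by revisiting the extremal characterisation \eqref{equaborn} at level $m+m'$ and restricting the minimisation to polynomials $Q=PQ_m$ with $\deg P\leq m'$: the factors involving $Q_m$ and $\omega=\prod_{j\le 2m}(\cdot-z_j)$ reproduce $|f^{[\mu]}(z)-r_m^{[\mu]}(z)|$ exactly via \eqref{equaerrorthog}, and what remains is the weighted Chebyshev problem for $\widetilde\omega=\prod_{j=2m+1}^{2m+2m'}(\cdot-z_j)$ alone, already solved in the proof of Theorem~\ref{thmprincR}, yielding the pointwise bound $|f^{[\mu]}-r_{m+m'}^{[\mu]}|\leq\delta\,|f^{[\mu]}-r_m^{[\mu]}|$ on $\mathbb E$. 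Your sketch (two applications of Gonchar's formula with different weights, a ``common Blaschke product factoring out'') gestures at this but is not carried out; without the specific choice $Q=PQ_m$ and the extremality of $Q_m$ the surplus-node extremal problem does not isolate itself.

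Second, even granting a relative estimate, the form you require --- $|E|\leq\delta|D|$ with $E=1-r_{m+m'}^{[\mu]}/f^{[\mu]}$ and $D=1-r_m^{[\mu]}/r_{m+m'}^{[\mu]}$ --- is stronger than what the technique delivers. Writing $a=1-r_m^{[\mu]}/f^{[\mu]}$ and $b=1-r_{m+m'}^{[\mu]}/f^{[\mu]}$, one gets $|b|\leq\delta|a|$, and since $D=(a-b)/(1-b)$ this only converts into $|E|\leq\tfrac{\delta(1+\delta)}{1-\delta}|D|$; feeding that constant into your chain $|a|\leq|D|(1+2\delta')$ produces $\tfrac{1+\delta+2\delta^2}{1-\delta}$, which exceeds the claimed $\tfrac{1+\delta}{1-\delta}$. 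The paper avoids this loss by estimating the quotient directly: $|a/D|=|a(1-b)/(a-b)|\leq|1-b|\cdot|a|/(|a|-|b|)\leq(1+\delta)/(1-\delta)$, using only $|b|\leq\delta|a|$ and $|a|\leq 1$ (the latter being exactly the role of the hypothesis $\eta_{2m}\leq(\sqrt2-1)^2$). With that route your detour through the atomic measure $\mu'$ and the bound $\|D\|\leq1$ becomes unnecessary.
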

\begin{proof}
    The first claim is an immediate application of Theorem~\ref{thmprincR}.
    For the second one we vary slightly the argument in \eqref{minpoly} (with $m$ replaced by $m+m'$): instead of taking a general polynomial $Q$ of degree $\leq m+m'$, we take a polynomial $Q=PQ_m$, with $P$ of degree $\leq m'$, and obtain for $z\in \mathbb E$ that
    $$
      \Bigl| \frac{f^{[\mu]}(z)-r_{m+m'}^{[\mu]}(z)}{f^{[\mu]}(z)-r_{m}^{[\mu]}(z)} \Bigr|
    \leq \min_{\deg P \leq m'} \frac{|\widetilde\omega(z)|}{P_{}^{2}(z)} \, \| \frac{P_{}^{2}}{\widetilde\omega} \|_{L^\infty([\alpha,\beta])} , \quad
    \widetilde \omega(z)=\prod_{j=2m+1}^{2m+2m'} (z-z_j) .
    $$
    Proceeding as in the proof of Theorem~\ref{thmprincR} we conclude that
    $$
              \Bigl| \frac{1-r_{m+m'}^{[\mu]}(z)/f^{[\mu]}(z)}{1-r_{m}^{[\mu]}(z)/f^{[\mu]}(z)} \Bigr| \leq \frac{4 \widetilde \eta}{(1-\widetilde \eta)^2} = \delta \in (0,1) ,
              \quad
              | 1-r_{m}^{[\mu]}(z)/f^{[\mu]}(z) | \leq \frac{4 \eta_{2m}}{(1-\eta_{2m})^2} \leq 1,
    $$
    the last inequality following from assumption on $\eta_{2m}$. As a consequence,
    \begin{eqnarray*} &&
       \Bigl| \frac{1- r_{m}^{[\mu]}(z)/f^{[\mu]}(z)}{1- r_{m}^{[\mu]}(z)/r_{m+m'}^{[\mu]}(z)} \Bigr|
       \leq  \Bigl| \frac{r_{m+m'}^{[\mu]}(z)}{f^{[\mu]}(z)} \Bigr| \, \frac{|1- r_{m}^{[\mu]}(z)/f^{[\mu]}(z)|}{
       |1- r_{m}^{[\mu]}(z)/f^{[\mu]}(z)| - |1- r_{m+m'}^{[\mu]}(z)/f^{[\mu]}(z)|} \leq \frac{1+\delta}{1-\delta},
    \end{eqnarray*}
    as required to conclude.
\end{proof}
%We will refer to the first bound as , and the second one without the factor $\frac{1+\delta}{1-\delta}>1$ as an a posteriori bound. In our numerical experiments  we will draw  this bound for fixed $m'$ and varying $m$... \footnote{to be discussed after having drawn, to to choose the $2m'$ extra interpolation points, if possible disjoint from the first $2m$ one, $\delta$ small?}

\begin{Rem}\label{rem_eta}
    In order to make the rate of convergence in Theorem~\ref{thmprincR} more explicit (which may guide us in the choice of "good" interpolation points), we need a more precise knowledge on the quantity $\eta_{2m}=\eta_{2m}(\mathbb E)$, what is possible for intervals.
    Let $[c,d]\subset \mathbb R \setminus [\alpha,\beta]$ a closed interval containing $\mathbb E$ (and possibly containing $\infty$), such that
    $\eta_{2m}(\mathbb E)\leq \eta_{2m}([c,d])$. Since the composition of two Blaschke factors is a Blaschke factor, it turns out that the rate $\eta_{2m}$ in Theorem~\ref{thmprincR} does not depend on the particular choice of the Moebius map $T$, that is, $\varphi$ is not necessarily normalized at infinity, all we need is that $T(\mathbb R)=\mathbb R$, $T(-1)=\alpha$, $T(1)=\beta$, and $T$ is increasing in $[-1,1]$. There exists however a unique such $T$ which satisfies in addition $T(1/\kappa)=c$ and $T(-1/\kappa)=d$, where the value of $\kappa\in (0,1)$ is uniquely obtained by observing that the cross ratio of four co-linear reals is invariant under linear transformations, that is,
    \begin{equation} \label{eq_choice_T}
        T(-1)=\alpha, \, \,
        T(1)=\beta, \, \,
        T(\frac{1}{\kappa})=c, \, \,
        T(-\frac{1}{\kappa})=d, \quad
         \frac{(c-\alpha)(d-\beta)}{(c-\beta)(d-\alpha)} = \Bigl( \frac{1+\kappa}{1-\kappa} \Bigr)^2
         =:\frac{1}{k^2},
    \end{equation}
    and in addition $T([-1,1])=[\alpha,\beta]$, $T([\frac{1}{\kappa},-\frac{1}{\kappa}])=[c,d]$, where
    $[\frac{1}{\kappa},-\frac{1}{\kappa}]=\overline{\mathbb R}\setminus (-\frac{1}{\kappa},\frac{1}{\kappa})$. Using this Moebius map, we get the simplified expression
    \begin{equation} \label{eq_new_eta}
       \eta_{2m}([c,d]) = \max_{w\in [\frac{1}{\lambda},-\frac{1}{\lambda}]} | \prod_{j=1}^{2m} \frac{w-w_j}{1-w_j w}| , \quad w_j = \varphi(z_j) , \quad \lambda=\frac{1}{\varphi(c)}=-\frac{1}{\varphi(d)} = \frac{1-\sqrt{k}}{1+\sqrt{k}}.
    \end{equation}
    This also allows to find configurations of interpolation points which lead to small $\eta_{2m}([c,d])$: for instance for a Padé approximant at $z_1$ we have that $z_1=...=z_{2m}$, with the optimal choice $z_1=\varphi^{-1}(\infty)\in (c,d)$, leading to the rate $\eta_{2m}([c,d]) = \lambda^{2m}$. The same rate is obtained for the two-point Padé approximant with $z_1,...,z_{m}=c, z_{m+1},...,z_{2m}=d$, we refer to \cite{these_Bisch} for further details.
\end{Rem}

Finally, minimizing \eqref{eq_new_eta} over all choices of $w_j$ of modulus $>1$ leads to the problem of minimal Blaschke products (after the substitution $u=1/w$) on the interval $[-\lambda,\lambda]$, which has been recently reviewed in \cite{TC15}. We summarize these findings in the following corollary.
\begin{Cor}\label{cor_a_priori}
    Let $\alpha,\beta,c,d,k,\lambda,\varphi$ with $\mathbb E\subset [c,d]$ be as in Remark~\ref{rem_eta}. Then the optimal nodes minimizing  $\eta_{2m}([c,d])$ are given in terms of Jacobi elliptic functions $\mbox{sn}(\cdot,\cdot)$ \cite{Abra64} and the complete elliptic integral $K(\cdot)$ \cite{Abra64} by
    \begin{equation} \label{optimal_nodes}
       \frac{1}{\varphi(z_j)} = \frac{1}{w_j} =
        \lambda \, \mbox{sn}\Bigl(K(\lambda^2)\bigl(-1+\frac{2j-1}{2m}\bigr),\lambda^2\Bigr) \in (-\lambda,\lambda)
    \end{equation}
    for $j=1,2,...,2m$, leading to the a priori bound
	\begin{equation}\label{optimal_nodes2}
	   \|\frac{f^{[\mu]}-r_{m}^{[\mu]}}{f^{[\mu]}}\|_{L^{\infty}([c,d])} \leq 8 \rho^{2m}/(1-2\rho^{2m})^2 ,
       \quad \rho:= \exp( \frac{-1}{\capa([\alpha,\beta],[c,d])} ),
	\end{equation}
    provided that $2\rho^{2m}<1$.
\end{Cor}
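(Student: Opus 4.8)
The plan is to read off from Remark~\ref{rem_eta} that minimizing $\eta_{2m}([c,d])$ over the interpolation points is a classical rational extremal problem with a known elliptic-function solution, and then to feed the resulting bound on $\eta_{2m}$ into the general-case estimate of Theorem~\ref{thmprincR}. Concretely, in \eqref{eq_new_eta} I substitute $u=1/w$ and $u_j=1/w_j=1/\varphi(z_j)$; using the identity $\frac{w-w_j}{1-w_jw}=\frac{u-u_j}{1-u_ju}$ and the fact that $w\in\overline{\mathbb R}\setminus(-\frac{1}{\lambda},\frac{1}{\lambda})$ corresponds to $u\in[-\lambda,\lambda]$, the quantity $\eta_{2m}([c,d])$ becomes the supremum norm on the subinterval $[-\lambda,\lambda]\subset(-1,1)$ of the finite Blaschke product with zeros $u_1,\dots,u_{2m}$, and the admissibility condition $|w_j|>1$ turns into $|u_j|<1$. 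Hence minimizing $\eta_{2m}([c,d])$ is exactly the problem of the minimal-norm degree-$2m$ Blaschke product on $[-\lambda,\lambda]$, together with locating its zeros.

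This is the classical Zolotarev-type extremal problem for minimal Blaschke products reviewed in \cite{TC15}. I would quote from there that the extremal Blaschke product is, up to a unimodular factor, real, with simple real zeros $u_j=\lambda\,\mbox{sn}\bigl(K(\lambda^2)(-1+\frac{2j-1}{2m}),\lambda^2\bigr)\in(-\lambda,\lambda)$; in particular a conjugate pair of zeros can always be replaced by a real double zero without increasing the norm, which is why it suffices to argue in the positive case. Undoing the substitution gives $1/\varphi(z_j)=u_j$, which is \eqref{optimal_nodes}, the $z_j$ being recovered as $z_j=\varphi^{-1}(1/u_j)$. It remains to control the size of the extremal norm. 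Here I would invoke the conformal invariance of condenser capacity: the map $z\mapsto 1/\varphi(z)$ carries the condenser $([\alpha,\beta],[c,d])$ onto the condenser with plates $\partial\mathbb D$ and $[-\lambda,\lambda]$, that is, onto the ring domain $\mathbb D\setminus[-\lambda,\lambda]$, so $\capa([\alpha,\beta],[c,d])=1/\log(1/\rho)$ with $\rho\in(0,1)$ the inner radius of the circular annulus conformally equivalent to $\mathbb D\setminus[-\lambda,\lambda]$; this is precisely $\rho=\exp(-1/\capa([\alpha,\beta],[c,d]))$. Tracking the constant in the closed-form expression for the extremal norm from \cite{TC15} — the factor $2$ coming from the endpoints of $[-\lambda,\lambda]$, exactly as for monic Chebyshev polynomials on an interval — then yields $\eta_{2m}([c,d])\le 2\rho^{2m}$ at the nodes \eqref{optimal_nodes}, and a fortiori $\eta_{2m}(\mathbb E)\le 2\rho^{2m}$ since $\mathbb E\subset[c,d]$.

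Finally, since $x\mapsto 4x/(1-x)^2$ is increasing on $[0,1)$, plugging $\eta_{2m}\le 2\rho^{2m}<1$ into the general-case bound of Theorem~\ref{thmprincR} gives $\|\frac{f^{[\mu]}-r_m^{[\mu]}}{f^{[\mu]}}\|_{L^\infty([c,d])}\le \frac{4\cdot 2\rho^{2m}}{(1-2\rho^{2m})^2}=8\rho^{2m}/(1-2\rho^{2m})^2$, which is \eqref{optimal_nodes2}. The substitution in the first step and the monotonicity step here are routine; the genuine work — and the step I expect to be the main obstacle — is the middle one: importing the Zolotarev solution in exactly the normalization of \eqref{optimal_nodes} (checking that the elliptic parameter is $\lambda^2$ and the argument shift is $-1+\frac{2j-1}{2m}$), and above all establishing the sharp constant, i.e. both the bound $\eta_{2m}\le 2\rho^{2m}$ and the conformal-invariance identification of $\rho$ with $\exp(-1/\capa([\alpha,\beta],[c,d]))$.
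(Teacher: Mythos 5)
Your proposal is correct and follows essentially the same route as the paper: substitute $u=1/w$ to reduce $\eta_{2m}([c,d])$ to the minimal Blaschke product problem on $[-\lambda,\lambda]$, quote \cite{TC15} for the elliptic-function zeros, bound the extremal value by $2\rho^{2m}$ via conformal invariance of condenser capacity, and feed this into the general-case estimate of Theorem~\ref{thmprincR} using the monotonicity of $x\mapsto 4x/(1-x)^2$. The only difference is in sourcing the step you yourself flag as the obstacle: where you propose to "track the constant" in a closed form from \cite{TC15}, the paper instead passes through the identity $\eta_{2m}=\sqrt{Z_{2m}}$ with the third Zolotarev problem and imports the sharp bound $Z_{2m}\leq 4\rho^{4m}$ from \cite[Corollary 3.2]{BBTS19}, which is the cleaner way to nail down the factor $2$.
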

\begin{proof}
    In \cite[Problem D, p. 112]{TC15}, the authors recall the following link with the third Zolotarev problem
    $$
         \eta_{2m}([c,d]) = \min_{\mbox{\footnotesize $B$ Blaschke of order $2m$}} \, \| B \|_{L^\infty([-\lambda,\lambda])}
         = \sqrt{\min_{R\in \mathcal R_{2m,2m}} \| R \|_{L^\infty([-\lambda,\lambda])} \, \| \frac{1}{R} \|_{L^\infty([1/\lambda,-1/\lambda]) }}
          ,
    $$
    and give explicitly in \cite[Section 3.2, p.109]{TC15} the roots \eqref{optimal_nodes} of an optimal Blaschke product.
    %\footnote{Joanna, je compte sur vous pour completer, je pense que l'on passe a un autre intervalle, non?...}
    Here \cite[Corollary 3.2]{BBTS19} gives us asymptotically sharp upper bound for this Zolotarev number and thus $\eta_{2m}([c,d])$ for optimal $z_j$ as in \eqref{optimal_nodes}, namely
    \begin{equation} \label{optimal_rate}
       \eta_{2m}([c,d]) \leq 2 \, \exp( - \frac{m}{\capa([-\lambda,\lambda],[1/\lambda,-1/\lambda])} )
       = 2 \, \rho^{2m}, %\quad \rho:= \exp( \frac{-1}{\capa([\alpha,\beta],[c,d])} ),
    \end{equation}
    the last equality following by symmetry and by the fact that the logarithmic capacity is invariant under conformal mappings of the underlying doubly connected domain. Combining with Theorem~\ref{thmprincR} and using $\eta_{2m}(\mathbb E) \leq \eta_{2m}([c,d]))$, we get the claimed inequality \eqref{optimal_nodes2}.
\end{proof}

\begin{Rem}\label{rem_stopping}
    With the notations of Theorem~\ref{thmprincR}, we may even slightly improve the statement of Corollary~\ref{cor_a_priori}: combining Theorem~\ref{thmprincR} and \eqref{optimal_rate} we find that the relative error for $r_m^{[\mu]}$ is bounded above by the residual error for $r_m^{[\nu]}$, which itself is bounded above by the a priori bound given in \eqref{optimal_nodes2}.
    Given a symmetric matrix $A$ with spectrum $\sigma(A)\subset [c,d]$, we will report in Figure~\ref{fig_representation1} and \S\ref{sec_experiences} about numerical experiments showing that, due to finite precision, the computed rational interpolants, evaluated on a computer at a matrix argument $A$, do no longer respect these inequalities, More precisely, the relative error for $r^{[\mu]}_m(A)$ has a quite erratic behavior once the error is no longer smaller than the a priori bound. In order to find the index $m$ corresponding to smallest error, we suggest to compute $r_m^{[\mu]}(A)$ and $r_m^{[\nu]}(A)$ for $m=1,2,..$ and stop one index before the first $m$ where the residual error is larger than five times the a priori bound, that is, where
    \begin{equation} \label{eq_stopping}
         \| I - r_{m}^{[\nu]}(A) \frac{1}{|\alpha|}(A-\alpha I)(A-\beta I) r_{m}^{[\nu]}(A) \|  \geq
         40 \rho^{2m}/(1-2\rho^{2m})^2.
    \end{equation}
    For the relative error curves presented in Figure~\ref{fig_representation1} we have displayed all indices $m$ verifying \eqref{eq_stopping} by markers. It turns out that this heuristic stopping criterion, which implicitly assumes that the floating point errors for $r_m^{[\mu]}(A)$ and $r_m^{[\nu]}(A)$ are about the same, does not require the a priori knowledge of $f^{[\mu]}(A)$, and seems to work very well in practice.
\end{Rem}

    Notice that the interpolation points \eqref{optimal_nodes} are just optimal for our upper bound, but not necessarily for the relative interpolation error. Indeed, one expects sharper bounds to hold if the support of $\supp(\mu)$  consisting for instance of two intervals is a proper subset of $[\alpha,\beta]$, or $\mathbb E$ is a proper subset of $[c,d]$. However, if $\supp(\mu)=[\alpha,\beta]$ and $\mu$ is regular in the sense of \cite{Tot92}, then Gonchar
    \cite[Theorem 1]{Gonch78bis} (see also \cite[Theorem~6.2.2]{Tot92}) showed that the $2m$th root of the error of best approximation of $f$ on $[c,d]$ in
    $\mathbb \mathcal R_{m-1,m}$ tends to $\rho$ for $m\to \infty$. More precisely, for the particular Markov function $f^{[\mu]}(z)=1/\sqrt{z}$ with $[\alpha,\beta]=[-\infty,0]$, inequalities for the relative error of best approximation of $f^{[\mu]}$ in
    $\mathbb \mathcal R_{m-1,m}$ are known since the work of Zolotarev\cite{Zol77}, who expressed several extremal problems in terms of Zolotarev numbers, see also \cite[p. 147]{Ach90} and the Appendix of \cite{BBTS19}, from which it follows that the relative error is $\leq 4 \rho^{2m}$, and behaves like $4\rho^{2m} (1+o(\rho^{2m})_{m\to \infty})$, see also, e.g., \cite[Theorem~V.5.5]{bra86}.
    Hence, if we want interpolation points which work for any Markov function, the interpolation points \eqref{optimal_nodes} are optimal up to a factor at most $2$.

%Include 2 graphics ??? for comparing best Padé, two-point Padé and "best" interpolant, e.g. for \eqref{sqrt}, one in the case of fast convergence, once in the case where $\beta\approx c$

\subsection{The disk case}\label{sec2.2}

%\footnote{Il y avait un probleme avec ce chapitre, vous utilisez les mêmes $c$ et $d$ dans vos applications conformes, non? Vous n'avez pas demontre le corollaire 2.3, que me semble demander que $z_j\in [1/\alpha,0]$. De plus, on peut eliminer le lemme de Freud en obtenant des estimations plus fines, ...}

With interpolation points $z_1,...,z_{2m}$ as before, we still keep the integral representation \eqref{equaerrorthog} for the error for complex $z$, but \eqref{equaborn}
and \eqref{minpoly} are no longer true, and need to be adapted following the technique of the Freud Lemma \cite[section III.7]{Freud71}. This result has also been exploited in the work of Ganelius \cite{Gan82} and of Braess \cite[Theorem~2.1]{Braess87}. The work of these authors on interpolation points with even multiplicity did inspire us in the proof of the following theorem, but we had to correct an erroneous application of the Freud Lemma in \cite[Eqn. (2.7)]{Braess87} where an additional $\beta-\alpha$ factor should occur, and could improve \cite[Eqn. (2.7)]{Braess87} by a factor $2$. We also give an estimate for interpolation points of arbitrary multiplicity which to our knowledge is new.

\begin{Thm}\label{thmprinc2}
    Let $-\infty\leq \alpha<\beta<-1$, and let the Markov function $f^{[\mu]}$ be as in \eqref{eq_markov}.
    Consider interpolation points $z_1,...,z_{2m}\in\mathbb C\setminus [\alpha,\beta]$ where we suppose that non-real points only occur in conjugate pairs.
    If all interpolation points have even multiplicity, then\footnote{Notice that for the critical case $\beta$ close to $-1$ and $\mu$ a probability measure, the above constant $C$ behaves at worst as $1/\mbox{dist}(\overline{\mathbb D},[\alpha,\beta])=1/(-1-\beta)$, a term which also occurs in the works of Ganelius \cite{Gan82} and Braess \cite{Braess87}.}
    $$
          \| f^{[\mu]} - r_m^{[\mu]} \|_{L^\infty(\overline{\mathbb D})}
          \leq C \, \max_{z\in[\alpha,\beta]} \Bigl| \prod_{j=1}^{2m} \frac{1-zz_j}{z-z_j} \Bigr| ,
          \quad C = \frac{1-\beta}{-1-\beta}\, f(-1).
    $$
    In the general case, $$
          \| f^{[\mu]} - r_m^{[\mu]} \|_{L^\infty(\overline{\mathbb D})}
          \leq C \, \frac{4 \eta'_{2m}}{(1-\eta'_{2m})^2} , \quad \eta'_{2m} = \max_{z\in \overline{\mathbb D}}
          G_{2m}(z)
	$$
    where $C$ is as before, and $G_{2m}$ as in Theorem~\ref{thmprincR}.
\end{Thm}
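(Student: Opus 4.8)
The plan is to recycle the proof of Theorem~\ref{thmprincR} with $\mathbb{E}=\overline{\mathbb{D}}$. The exact error representation \eqref{equaerrorthog} still holds for complex $z$, so one starts from
$$
  |f^{[\mu]}(z)-r_m^{[\mu]}(z)|\ \le\ \frac{|\omega(z)|}{|Q_m(z)|^2}\,\inalbe\frac{Q_m^2(x)}{\omega(x)}\,\frac{d\mu(x)}{|z-x|},\qquad z\in\overline{\mathbb{D}},
$$
with $\omega$ as in \eqref{equaomega} and $Q_m$ the orthonormal denominator of $r_m^{[\mu]}$. The elementary geometric input is that for $z\in\overline{\mathbb{D}}$ and $x\le\beta<-1$ one has $\mathrm{Re}(x-z)=x-\mathrm{Re}(z)\le x+1\le\beta+1<0$ (using $\mathrm{Re}(z)\ge-|z|\ge-1$), hence $|z-x|\ge\mathrm{Re}(z)-x\ge-1-x\ (\ge-1-\beta=\mathrm{dist}(\overline{\mathbb{D}},[\alpha,\beta]))$. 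Since $x\mapsto\frac{1-x}{-1-x}$ is increasing on $(-\infty,\beta]$ this gives $\frac{1-x}{|z-x|}\le\frac{1-\beta}{-1-\beta}$, and together with $\inalbe\frac{d\mu(x)}{1-x}=f^{[\mu]}(1)\le f^{[\mu]}(-1)=f(-1)$ it produces exactly the constant $C=\frac{1-\beta}{-1-\beta}f(-1)$. The real substance, however, is that \eqref{equaborn}--\eqref{minpoly}, which on the real line let one replace the fixed orthonormal $Q_m$ by the weighted Chebyshev polynomial (by Szegő's extremal property of orthonormal polynomials for the positive measure $d\mu/(\omega|z-x|)$), fail for non-real $z$; the remedy is the Freud Lemma \cite[Section~III.7]{Freud71}, which still allows one to pass from $Q_m$ to a near-extremal polynomial at the complex point $z\in\overline{\mathbb{D}}$ at the cost of a geometric constant.

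For the general case I would, after bounding $\frac{1}{|z-x|}\le\frac{1-\beta}{-1-\beta}\frac{1}{1-x}$ and $\frac{Q_m^2}{\omega}\le\|Q_m^2/\omega\|_{L^\infty([\alpha,\beta])}$, reduce to
$$
  |f^{[\mu]}(z)-r_m^{[\mu]}(z)|\ \le\ C\,\frac{|\omega(z)|}{|Q_m(z)|^2}\,\Bigl\|\frac{Q_m^2}{\omega}\Bigr\|_{L^\infty([\alpha,\beta])},
$$
use the Freud Lemma to pass from $Q_m$ to the minimiser over $\deg Q\le m$ of $\frac{|\omega(z)|}{|Q(z)|^2}\|Q^2/\omega\|_{L^\infty([\alpha,\beta])}$, and then invoke the weighted-Chebyshev/Blaschke computation from the proof of Theorem~\ref{thmprincR} verbatim at the complex point $z$: its value is $\frac{4|G_{2m}(z)|}{|1+G_{2m}(z)|^2}$. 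Taking the supremum over $z\in\overline{\mathbb{D}}$ and using $|G_{2m}(z)|\le\eta'_{2m}<1$, hence $|1+G_{2m}(z)|\ge 1-\eta'_{2m}$, yields $\|f^{[\mu]}-r_m^{[\mu]}\|_{L^\infty(\overline{\mathbb{D}})}\le C\,\frac{4\eta'_{2m}}{(1-\eta'_{2m})^2}$, as claimed; the $(1-\eta'_{2m})^2$ loss is exactly what is recovered in the even-multiplicity case.

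For even multiplicities, write $\omega=W^2$ with $\deg W=m$. Following Ganelius \cite{Gan82} and Braess \cite{Braess87} I would not crudely majorise $\frac{1}{|z-x|}$ by $\frac{1}{-1-x}$ but analyse the error integral more carefully, so that the relevant $L^\infty$ extremal problem is the one for the weight $1/|W|$ on the support $[\alpha,\beta]$, which brings in $\max_{x\in[\alpha,\beta]}\bigl|\prod_{j=1}^{2m}\frac{1-xz_j}{x-z_j}\bigr|$ — a Blaschke-type product for the \emph{disk}, evaluated on $[\alpha,\beta]$; this is why the statement is phrased through that quantity rather than through $\eta'_{2m}$ (the two being comparable for well-chosen nodes, cf.\ the Zolotarev duality in Corollary~\ref{cor_a_priori}). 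I expect the only genuinely delicate point to be the constant in this step: Braess's \cite[Eqn.~(2.7)]{Braess87} is off by a factor $\beta-\alpha$ precisely here (and is moreover improvable by a factor~$2$), so a careful re-run of the Freud-Lemma estimate — together with the elementary kernel bound above, which is what keeps $C$ finite even when $\alpha=-\infty$ — is needed to land on the clean constant $C=\frac{1-\beta}{-1-\beta}f(-1)$.
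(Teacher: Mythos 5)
Your skeleton is right in several places: the kernel inequality $\frac{1}{1-x}\le\frac{1}{|z-x|}\le\frac{1}{-1-x}\le\frac{1-\beta}{-1-\beta}\frac{1}{1-x}$ for $|z|\le 1$, $x\le\beta<-1$ is exactly the paper's \eqref{inequality_disk} and does produce the constant $C=\frac{1-\beta}{-1-\beta}f(-1)$; and your treatment of the general case (evaluate the weighted Chebyshev identity $\frac{P^2(z)}{\omega(z)}=\frac{(1+G_{2m}(z))^2}{4G_{2m}(z)}$ at complex $z$ by analytic continuation and bound by $\frac{4\eta'_{2m}}{(1-\eta'_{2m})^2}$) matches the paper. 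But the pivotal step — how to legitimately replace the fixed orthonormal denominator $Q_m$ by a free polynomial $Q$ of degree $\le m$ at a \emph{non-real} $z$ — is left unproved: you invoke ``the Freud Lemma \dots at the cost of a geometric constant'' and then concede that ``a careful re-run of the Freud-Lemma estimate is needed.'' That is precisely the point where the proof lives or dies. The Freud Lemma in its standard form carries the factor $\mathrm{diam}([\alpha,\beta])/(2\,\mathrm{dist}(z,[\alpha,\beta]))$, which is the spurious $\beta-\alpha$ you yourself flag in Braess and which is \emph{infinite} in the case $\alpha=-\infty$ explicitly allowed by the theorem; without an additional device (e.g.\ the inversion $x\mapsto 1/x$) it cannot yield the stated $C$. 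The paper does not use the Freud Lemma at all. Instead it exploits orthogonality of $Q_m$ to $x\mapsto\frac{Q(x)/Q(z)-1}{z-x}$ to write $f^{[\mu]}-r_m^{[\mu]}=\frac{\omega}{Q_mQ}\int\frac{Q_mQ}{\omega}\frac{d\mu(x)}{z-x}$ for \emph{any} such $Q$, applies Cauchy--Schwarz to split this into the geometric mean of the $Q_m^2$-integral and the $Q^2$-integral, and then reabsorbs the $Q_m^2$-factor into $\frac{1-\beta}{-1-\beta}\,|f^{[\mu]}-r_m^{[\mu]}|$ via $\frac{1}{|z-x|}\le\frac{1-\beta}{-1-\beta}\,\mathrm{Re}\frac{1}{z-x}$. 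Dividing out one power of the error leaves exactly $C\min_{\deg Q\le m}\|\omega/Q^2\|_{L^\infty(\partial\mathbb D)}\|Q^2/\omega\|_{L^\infty([\alpha,\beta])}$ with no diameter factor. This is the missing idea.

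A secondary gap: in the even-multiplicity case you suggest avoiding the ``crude'' majorisation of $1/|z-x|$, but the paper uses it there too; what makes that case work is the explicit choice $Q(z)=(1-z_1z)\cdots(1-z_mz)$, for which $|\omega(z)/Q(z)^2|=\bigl|\prod_{j=1}^{m}\frac{z-z_j}{1-z_jz}\bigr|^2$ is the modulus of a Blaschke product for the disk, hence $\le 1$ on $\overline{\mathbb D}$ by the maximum principle, while $\|Q^2/\omega\|_{L^\infty([\alpha,\beta])}$ is exactly $\max_{x\in[\alpha,\beta]}\bigl|\prod_{j=1}^{2m}\frac{1-xz_j}{x-z_j}\bigr|$. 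You correctly guess that this product arises from the extremal problem on $[\alpha,\beta]$, but never exhibit the polynomial that realises it.
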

\begin{proof}
   Define $\omega$ as in \eqref{equaomega}.
   Let $Q$ be any polynomial of degree at most $m$ with real coefficients, then $x\mapsto \frac{Q(x)/Q(z)-1}{z-x}$ is a polynomial of degree at most $m-1$, and thus orthogonal to $Q_m$ with respect to the measure $\mu/\omega$. This leads to the well-known fact that
   $$
           f^{[\mu]}(z)-r^{[\mu]}_m(z) =
         \frac{\omega(z)}{Q_{m}^{2}(z)} \inalbe \frac{Q_{m}^{2}(x)}{\omega(x)}\frac{d\mu(x)}{z-x}
         = \frac{\omega(z)}{Q_{m}(z)} \inalbe \frac{Q_{m}(x)}{\omega(x)}\frac{d\mu(x)}{z-x}
         = \frac{\omega(z)}{Q_{m}(z)Q(z)} \inalbe \frac{Q_{m}(x)Q(x)}{\omega(x)}\frac{d\mu(x)}{z-x}.
   $$
   We now observe that
   \begin{equation} \label{inequality_disk}
     \mbox{for $|z|=1$ and $x\leq \beta<-1$:} \quad
        \frac{1}{1-x} \leq \mbox{Re}(\frac{1}{z-x}) \leq \frac{1}{|z-x|} \leq \frac{1}{-1-x}
        \leq  \frac{1}{1-x} \frac{1-\beta}{-1-\beta}   ,
   \end{equation}
   and apply the Cauchy-Schwarz inequality in the last integral in order to obtain
   \begin{eqnarray*} |f^{[\mu]}(z)-r_m^{[\mu]}(z)|^2
       &\leq&
       \Bigl| \frac{\omega(z)}{Q_{m}^{2}(z)} \Bigr| \, \inalbe \frac{Q_{m}^{2}(x)}{\omega(x)}\frac{d\mu(x)}{|z-x|} \,
       \Bigl| \frac{\omega(z)}{Q_{}^{2}(z)} \Bigr|\,
       \inalbe  \frac{Q_{}^{2}(x)}{\omega(x)}\frac{d\mu(x)}{|z-x|}  .
       \\
       &\leq&
       \frac{1-\beta}{-1-\beta}\, |f^{[\mu]}(z)-r^{[\mu]}_m(z)| \,
       \Bigl| \frac{\omega(z)}{Q_{}^{2}(z)} \Bigr|\,
       \inalbe  \frac{Q_{}^{2}(x)}{\omega(x)}\frac{d\mu(x)}{|z-x|}  ,
   \end{eqnarray*}
   the second inequality following from \eqref{inequality_disk}. Thus we get for  $|z|=1$ the following upper bounds for the absolute and relative interpolation errors
   \begin{eqnarray} \label{goal_disk}
      && |f^{[\mu]}(z)-r^{[\mu]}_m(z)| \leq C \,
      \min_{\deg Q \leq m} \| \frac{\omega}{Q_{}^{2}}\|_{L^\infty(\partial \mathbb D)} \, \| \frac{Q_{}^{2}}{\omega} \|_{L^\infty([\alpha,\beta])},
      \\ \nonumber
      && \Bigl|1 - \frac{r^{[\mu]}_m(z)}{f^{[\mu]}(z)} \Bigr| \leq (\frac{1-\beta}{-1-\beta})^2 \,
      \min_{\deg Q \leq m} \| \frac{\omega}{Q_{}^{2}}\|_{L^\infty(\partial \mathbb D)} \, \| \frac{Q_{}^{2}}{\omega} \|_{L^\infty([\alpha,\beta])},
   \end{eqnarray}
   the second following from the first since $C=\frac{1-\beta}{-1-\beta}\, f^{[\mu]}(-1) \leq (\frac{1-\beta}{-1-\beta})^2 \, \mbox{Re}f^{[\mu]}(z)\leq (\frac{1-\beta}{-1-\beta})^2 \, |f^{[\mu]}(z)|$, again by \eqref{inequality_disk}.

   In the case of interpolation points of even multiplicity, say, $z_{m+j}=z_j$ for $j=1,...,m$, we get the upper bound claimed in Theorem~\ref{thmprinc2} by taking $Q(x)=(1-z_1z)...(1-z_mz)$, which can be shown using the maximum principle for analytic functions to be the extremal polynomial in \eqref{goal_disk} in this special case. Finally, in the general case we use the same polynomial for the interval $[\alpha,\beta]$ as in the proof of Theorem~\ref{thmprincR}, which can be shown to be optimal for \eqref{goal_disk} up to the factor $4/(1-\eta'_{2m})^2$.
\end{proof}

\begin{Rem}
   As in the previous chapter, one may ask for a single optimal interpolation point $z_1=...=z_{2m}$, or for a configuration of distinct points minimizing $\eta'_{2m}$. Here the results of the previous chapter remain valid, by choosing $[c,d]=[1/\beta,1/\alpha]$ in \eqref{eq_choice_T}, we refer the reader to  \cite{these_Bisch} for further details. As a rule of thumb, "good" interpolation points are in $[1/\beta,1/\alpha]$.

   In contrast, in his study of Faber-Padé approximants \cite{Kniz08}, Knizhnerman considered the special case $z_1=...=z_{2m}=0$ and hence $\eta_{2m}'=(1/\beta)^{2m}$. In this or in the more general case $z_j \in [1/\alpha,0]$, the error analysis simplifies considerably: since the maximum of the error $|f^{[\mu]}-r^{[\mu]}_m|$ on the unit circle can be shown to be attained at $z=-1$, and the same is true for the statement of Theorem~\ref{thmprinc2}. In addition, the factor $\frac{1-\beta}{-1-\beta}$ can be dropped.
\end{Rem}

\begin{Rem}\label{rem_FoV}
   Braess \cite{Braess87} used the Carath\'eodory-Fej\'er method to derive $L^\infty([-1,1])$ estimates from $L^\infty(\overline{\mathbb D})$ estimates for the interpolation error. Comparing our Theorems~\ref{thmprincR} and~\ref{thmprinc2}, our interval estimates seem to be sharper.

   More generally, for a general convex compact set $\mathbb E$ being symmetric with respect to the real axis, one may use the Faber map $\mathcal F$ (see \cite{Gaier87} or \cite{beckermann09}) and its modification $\mathcal F_+(h)=\mathcal F(h)+h(0)$  to get
   good rational approximants on $\mathbb E$ from those on $\mathbb D$. More precisely,
   Ellacott \cite[Theorem 1.1]{Ella83} showed that $r\in \mathcal R_{m-1,m}$ iff $\mathcal F(r)\in \mathcal R_{m-1,m}$ and simultaneously \cite{Kniz08} and \cite{beckermann09} found out that the Faber pre-image of a Markov function is a Markov function, with explicit formulas for the measure. Thus one may use the inequality
   $$
        \| \mathcal F_+(f^{[\mu]}) - \mathcal F_+(r_m^{[\mu]}) \|_{L^\infty(\mathbb E)} \leq  2 \,
        \| f^{[\mu]} - r_m^{[\mu]} \|_{L^\infty(\overline{\mathbb D})}
   $$
   shown in \cite[Theorem 2]{Gaier87}
   together with our findings in Theorem~\ref{thmprinc2} to find good rational approximants on $\mathbb E$ for Markov functions. In the context of Markov functions of matrices, we should also mention the result \cite[Theorem~2.1]{beckermann09} that
   \begin{equation} \label{est_function_of_matrices}
        \| \mathcal F_+(f^{[\mu]})(A) - \mathcal F_+(r^{[\mu]}_m)(A) \| \leq  2 \,
        \| f^{[\mu]} - r^{[\mu]}_m \|_{L^\infty(\overline{\mathbb D})}
   \end{equation}
   provided that the field of values of the square matrix $A$ is a subset of $\mathbb E$.
\end{Rem}

    To summarize, in \S\ref{sec2} we presented a detailed study on the relative error (in exact arithmetic) obtained by approaching $f^{[\mu]}$ by $r_{m}^{[\mu]}$ with $r_m^{[\mu]}$ a rational interpolant of $f^{[\mu]}$ of type $[m-1|m]$ at quite arbitrary interpolation points. This allowed us to find quasi-optimal interpolation points which minimize our upper bound for the error, together with the explicit and simple a priori bound \eqref{optimal_nodes2}. Also, several new a posteriori error bounds are provided, which allowed to derive in Remark~\ref{rem_stopping} a new heuristic stopping criterion for finding the degree $m$ leading to a small relative error even in the context of floating point arithmetic. However, for a successful implementation we need to discuss how to represent and compute our interpolants.

\section{The computation of rational interpolants for distinct real interpolation nodes}\label{sec3}

The computation of a rational interpolant $r_m=P_m/Q_m$ of type $[m-1|m]$ or $[m|m]$ of $f$ or its evaluation at some argument $z\in [c,d]$ is strongly connected to the way how we represent our rational interpolant. We will suppose in what follows our (finite or infinite sequence of) interpolation points $z_j$ for $j\geq 1$ are distinct, real, and ordered such that
\begin{equation} \label{ordering}
       \beta < c \leq z_1 < z_2 < ... \leq d.
\end{equation}
\subsection{Computing separately numerator and denominator}
A first perhaps naive approach would be to represent both numerator and denominator in the same polynomial basis, and then solve for the coefficients in this basis by writing a homogeneous system of $2m$ equations and $2m+1$ unknowns translating the interpolation conditions $f(z_j)Q_m(z_j)-P_m(z_j)=0$ for $j=1,...,2m$.
Using the basis of monomials, interesting complexity results for evaluating $P_m(A)$ and $Q_m(A)$ are given in \cite{Fasi19}. %\footnote{\textcolor{red}{Massimiliano Fasi, Optimality of the Paterson–Stockmeyer method for evaluating matrix polynomials and rational matrix functions, Linear Algebra and its Applications Volume 574, 1 August 2019, Pages 182-200}}
However, in practice the underlying matrix of coefficients turns out to be quite often very ill-conditioned, there is a rule of thumb for Pad\'e approximants using monomials \cite[Section~2.1]{BGM96}
that we might loose at least $m$ decimal digits of precision in solving such systems.
Of course, other (scaled) polynomial bases, like Chebyshev polynomials scaled to the interval $[c,d]$ or a Newton basis corresponding to a suitable ordering of the interpolation points might lead to better conditioning, but a "good" basis should not only depend on $[c,d]$ but also on the function $f$ to be interpolated. Thus we have not implemented such an approach.
\subsection{Computing poles and residuals in a partial fraction decomposition}\label{sec3.1}

Since for Markov functions $f$ the interpolant $r_m$ has $m$ simple poles $x_1,...,x_m$ in $(\alpha,\beta)$, we may look directly for the partial fraction decomposition
\begin{equation} \label{pfd}
    r_m(z) = \frac{a_1}{z-x_1}+...+\frac{a_m}{z-x_m} .
\end{equation}
By the work of Mayo and Antoulas \cite{MAYO07} nicely summarized in the recent paper \cite[Section 2]{Emb19}, $r_m$ may be represented as a transfer function of a SISO dynamical system with help of a matrix pencil: we have that $r_m(z)=W(\mathbb{L}_{s}-z\mathbb{L})^{-1}V^T$ with the row vectors
$W=(f(z_{2j-1}))_{j=1,\ldots,m}$, $ V=(f(z_{2j}))_{j=1,\ldots,m} $ and the Loewner matrices
$$
     \mathbb{L}=\Bigg(
        \frac{f(z_{2j})-f(z_{2k-1})}{z_{2j}-z_{2k-1}}
      \Bigg)_{\substack{j=1,\ldots,m \\ k=1,\ldots,m}}\hspace*{0.2cm} \text{ and }\hspace*{0.2cm}
     \mathbb{L}_{s}=\Bigg(
     \frac{z_{2j}f(z_{2j})-z_{2k-1}f(z_{2k-1})}{z_{2j}-z_{2k-1}}
      \Bigg)_{\substack{j=1,\ldots,m \\ k=1,\ldots,m}}.
$$
Thus the poles are the eigenvalues of the Loewner matrix pencil $\mathbb{L}_{s}-z\mathbb{L}$ and can be computed with standard software, we used the Matlab function {\tt eig}. We then compute the residuals by a least square fitting,\footnote{However, the underlying rectangular Cauchy matrix $(\frac{1}{z_j-x_k})$ might be quite ill-conditioned, even after row or column scaling  \cite[Cor 4.2]{BBTS19}. A closer analysis seems to show that the given bound for the inverse condition number is related to the rate of best rational approximants of our $f$ on the interval $[c,d]$.}
 see Algorithm~\ref{algo_pfd}.
\begin{algorithm}[h!]
\SetAlgoLined
\KwResult{Poles $x_1,...,x_m$ and residuals $a_1,...,a_m$ in \eqref{pfd}.}
 \Begin{Find the eigenvalues $x_1,...,x_m$ of the Loewner matrix pencil $\mathbb{L}_{s}-z\mathbb{L}$ of Mayo and Antoulas\;
 Compute the solution $y=(a_1,...,a_m)^T$ of the least square problem of minimizing
 $$
    \| \Bigl(\frac{1}{z_j-x_k}\Bigr)_{j=1,...,2m,k=1,...,m} y - \Bigl(f(z_j)\Bigr)_{j=1,...,2m} \|
 $$.}
 \caption{Given a function $f$ and interpolation points $z_1,...,z_{2m}$, compute poles $z_j$ and residuals $a_j$ of the partial fraction decomposition \eqref{pfd} of the rational interpolant of $f$ of type $[m-1|m]$.}\label{algo_pfd}
\end{algorithm}

\subsection{Barycentric rational functions}\label{sec3.2}
The barycentric representation \cite{Berrut05}
%\footnote{ Ajouter reference J.-P.  Berrut,  R.  Baltensperger,  and  H.  D.  Mittelmann.   Recent  developments  in  barycentricrational interpolation.  In: Trends and Applications in Constructive Approximation, pages27–51. Springer, 2005.}
of a rational function $r\in \mathcal{R}_{m,m}$ with distinct support points $t_0,...,t_m$ is given by
\begin{equation}\label{bary}
r(z)=\sum_{j=0}^{m}\frac{\alpha_{j}}{z-t_{j}}\Bigg{/}\sum_{j=0}^{m}\frac{\beta_{j}}{z-t_{j}}
  ,
\end{equation}
we refer the reader to \cite[p.551]{Higham04} and \cite[Proposition 2.4.3]{Celis08} and the discussion in \cite[Section~2.3]{BB18} for backward and forward stability results on evaluating such rational functions. For constructing rational interpolants of type $[m|m]$ of $f$, one typically choses $\alpha_j=f(t_j)\beta_j$ ensuring that $r$ interpolates $f$ at these support points. Such interpolating rational functions with well-chosen support points have been the building block for a new implementation {\em minimax} of the rational Remez algorithm \cite{BB18}, which allows to compute best rational approximants of type $[m'|m]$ of Markov functions for $m',m\leq 40$ to machine precision in double precision arithmetic where previous implementations required high precision arithmetic to achieve this goal.

For computing the rational interpolant of type $[m|m]$, we choose $t_{j}=z_{2j+1}$ for $j=0,...,m$, and it remains to solve a homogeneous linear system for $\beta_0,...,\beta_m$ in order to get also interpolation at the points $z_{2k}$ for $k=1,...,m$, see Algorithm~\ref{algo_bary}. Notice that the underlying matrix of coefficients is the transposed of the Loewner matrix $\mathbb L$ seen in \S\ref{sec3.1}, bordered with one additional column.
\begin{algorithm}[h!]
\SetAlgoLined
\KwResult{For $j=0,1,...,m$: Support points $t_j$ and weights $\beta_j$, $\alpha_j=f(t_j) \beta_j$ in \eqref{bary}.}
\Begin{Define support points $t_j=z_{2j+1}$ for $j=0,1,...,m$\;
 Compute a solution $y=(\beta_0,...,\beta_m)^T$ of the homogeneous system of linear equations
 $$
    \Bigl(\frac{f(z_{2k})-f(t_j)}{z_{2k}-t_j}\Bigr)_{k=1,2,...,m,j=0,1,...,m} y = 0 .
 $$}
 \caption{Given a function $f$ and interpolation points $z_1,...,z_{2m+1}$, compute support points $t_j$ and weights $\beta_j$, $\alpha_j=f(t_j)\beta_j$ of the rational interpolant \eqref{bary} of $f$ of type $[m|m]$.}\label{algo_bary}
\end{algorithm}
For ensuring stability, numerical experiments show that it is mandatory that the support points and the other interpolation points interlace, see \eqref{ordering}. This is supported in \cite[Cor 4.5]{BB18} saying that, after suitable explicit column and row scaling, the Cauchy matrix $(\frac{1}{z_{2j}-z_{2k-1}})_{j,k}$ is unitary provided that we have the interlacing \eqref{ordering}.

Things become slightly more technical for the interpolant of type $[m-1|m]$, here we have taken the support points $t_0=z_1$ and $t_j=z_{2j}$ for $j=1,...,m$ ensuring nearly interlacing with the remaining interpolation points. Again we have to solve a homogeneous linear system for $\beta_0,...,\beta_m$ to impose interpolation at $z_{2k-1}$ for $k=2,...,m$, where the additional
%\footnote{Erreur (de frappe?) dans votre manuscrit.}
equation $f(t_0)\beta_0+....+f(t_m) \beta_m=0$ ensures that the degrees are correct.

\subsection{Thiele continued fractions}\label{sec3.3}
We finally turn to a representation of rational interpolants through continued fractions.
%which turns out to be our method of choice both for scalar and matrix arguments. We start with some definitions.
Following \cite[Section 7.1]{BGM96}, for given interpolation points $z_1,z_2,...$ and parameters $f^{(1)}_1,f^{(2)}_2,... \in \mathbb C$,
the $M$th convergent of a Thiele continued fraction is the rational function
\begin{equation} \label{Thiele}
      R_{M}^{(1)}(z) = f^{(1)}_1 + \cfr{z-z_1}{f^{(2)}_2}+...+ \cfr{z-z_{M-1}}{f^{(M)}_M} .
\end{equation}
We refer to a positive Thiele fraction if all $f^{(j)}_j$ are strictly positive, and \eqref{ordering} holds.
Given a function $f^{(1)}$, we define its reciprocal differences by
\begin{equation} \label{T_diff}
       \forall 1 \leq k \leq M : \quad f^{(1)}_k=f^{(1)}(z_k) , \quad
       \forall 1 \leq j < k \leq M :
       \quad \quad f^{(j+1)}_k= \frac{z_k-z_j}{f^{(j)}_k-f^{(j)}_j} ,
\end{equation}
where we tacitly suppose that there is no breakdown (that is, no division by $0$). Then $R_M^{(1)}(z_k)=f^{(1)}(z_k)$ for $k=1,...,M$, more precisely, $R^{(1)}_{2m+1}$ is the rational interpolant of type $[m|m]$ of $f^{(1)}$ at the interpolation points $z_1,...,z_{2m+1}$, and $R^{(1)}_{2m}$ is the rational interpolant of type $[m|m-1]$ of $f^{(1)}$  at the interpolation points $z_1,...,z_{2m}$. Setting $f^{(1)}(z)=1/f(z)$, we conclude that $1/R_{2m}^{(1)}$ is the desired rational interpolant of type $[m-1|m]$ of $f$.
This interpolation property becomes immediate by introducing the families of functions
\begin{equation} \label{T_functions}
    \forall 1 \leq j < k \leq M :
    \quad f^{(j+1)}(z)=\frac{z-z_j}{f^{(j)}(z)-f^{(j)}(z_j)},
    \quad R^{(j+1)}_M(z)=\frac{z-z_j}{R^{(j)}_M(z)-R^{(j)}_M(z_j)},
\end{equation}
since then $f^{(j)}_k=f^{(j)}(z_k)=R^{(j)}_M(z_k)$ for $1 \leq j \leq k \leq M$, and
$$
      R_{M}^{(j)}(z) = f^{(j)}_j + \cfr{z-z_j}{f^{(j+1)}_{j+1}}+...+ \cfr{z-z_{M-1}}{f^{(M)}_M} .
$$
In particular, $R_{M}^{(j)}$ is a rational interpolant of $f^{(j)}$ at the interpolation points $z_j,z_{j+1},...,z_M$.
The backward evaluation scheme at a fixed argument $z$ of a Thiele continued fraction given the parameters $f_j^{(j)}$ is given by
\begin{equation} \label{T_evaluation}
    R_M^{(M)}(z)=f_M^{(M)} , \quad \mbox{and for~~} j=M-1,M-2,..., 1 : \quad R_M^{(j)}(z) = f_{j}^{(j)} +
    \frac{z-z_j}{R^{(j+1)}_M(z)} .
\end{equation}
In his stability analysis of this scheme, Graves-Morris \cite{GM81} observed that, before computing $f^{(j+1)}_k$ for $k=j+1,...,M$ via \eqref{T_diff}, it is important to reorder the couples $(f^{(j)}_k,z_k)$ for $k=j,j+1,...,M$ such that, after reordering,
\begin{equation} \label{T_pivot}
       |f^{(j)}_j | = \min \{ | f^{(j)}_k| : k=j,j+1,...,M \},
\end{equation}
reminding of partial pivoting in Gaussian elimination. A combination of \eqref{T_diff}, \eqref{T_pivot}, and \eqref{T_evaluation} gives the modified Thacher-Tukey algorithm of \cite{GM81} and
\cite[Section 7.1]{BGM96}, which we have simplified a bit by omitting the case of breakdown
in \eqref{T_diff}, see Algorithm~\ref{algo_Thiele}.
\begin{algorithm}[h!]
\SetAlgoLined
\KwResult{Coefficients $f_{1}^{(1)},...,f_{M}^{(M)}$ in \eqref{Thiele}
         and value $R_M^{(1)}(z)$ of the interpolant.}
 \Begin{
   \For{$k=1,2,...,M$}{initialize $f^{(1)}_k=f^{(1)}(z_k)$\;}
   \For{$j=1,2,...,M-1$}{
      Permute $(f^{(j)}_k,z_k)$ for $k=j,j+1,...,M$ such that, after reordering,
        \eqref{T_pivot} holds\;
         \For{$k=j+1,j+2,...,M$}{$f^{(j+1)}_k=(z_k-z_j)/(f^{(j)}_k-f^{(j)}_j)$;}
    }
    initialize $R_M^{(M)}(z)=f_M^{(M)}$\;
    \For{$j=M-1,M-2,..., 1$}{$R_M^{(j)}(z) = f_{j}^{(j)} +
    \frac{z-z_j}{R^{(j+1)}_M(z)}$\;}
 }
 \caption{Given a function $f^{(1)}$ and interpolation points $z_1,...,z_{M}$, compute and evaluate at $z\in \mathbb C$ via the modified Thacher-Tukey algorithm of \cite{GM81} the Thiele continued fraction representation \eqref{Thiele} of the rational interpolant of $f^{(1)}$ of type $[m|m-1]$ (if $M=2m$) or of type $[m|m]$ (if $M=2m+1$).}\label{algo_Thiele}
\end{algorithm}

Notice that if $R_M^{(1)}$ is a positive continued fraction then by recurrence on $k-j$ using \eqref{ordering} and \eqref{T_diff} one shows that $0 < f^{(j)}_j < f^{(j)}_k$ for $1 \leq j < k\leq M$, that is, there is no breakdown in \eqref{T_diff}, and we obtain \eqref{T_pivot} without pivoting. However, we are not aware of results in the literature on classes of functions where the interpolating Thiele continued fraction is positive. Such a class is given in our first main result, the proof is presented later.

\begin{Thm}\label{Thm_Markov}
   If this is true for $1/f^{(1)}$, then all functions $1/f^{(j)}$ defined in \eqref{T_functions} are Markov functions with a measure $\mu^{(j)}$ having an infinite support $\subset [\alpha,\beta]$.
\end{Thm}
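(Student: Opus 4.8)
The plan is to argue by induction on $j$: for $j=1$ the hypothesis says $1/f^{(1)}=f=f^{[\mu]}$. A glance at $1/f^{(2)}(z)=\bigl(1/f(z)-1/f(z_1)\bigr)/(z-z_1)$, which tends to $1/\mu([\alpha,\beta])\neq 0$ at infinity whenever that total mass is finite, shows that the statement has to be read as: each $1/f^{(j)}$ is a Markov function up to a nonnegative additive constant,
$$
   \frac{1}{f^{(j)}(z)}=c_j+\int_\alpha^\beta\frac{d\mu^{(j)}(x)}{z-x},\qquad c_j\ge 0,\quad \mu^{(j)}\ge 0,\quad \supp(\mu^{(j)})\subset[\alpha,\beta]\ \text{infinite}.
$$
In the induction I would carry the slightly stronger hypothesis that, in addition, $G_j:=1/f^{(j)}$ has no zero in $\mathbb C\setminus[\alpha,\beta]$; this is the property that makes the step close, since it is exactly what prevents a spurious pole of $1/f^{(j+1)}$ from appearing outside $[\alpha,\beta]$.

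For the inductive step, assume $G_j$ has the representation above and is zero-free off $[\alpha,\beta]$. Then $f^{(j)}=1/G_j$ maps the upper half plane into the upper half plane (the reciprocal of a map $\mathbb C_+\to\mathbb C_-$ maps $\mathbb C_+\to\mathbb C_+$), it is analytic and real on $\mathbb R\setminus[\alpha,\beta]$, and --- since $G_j(\infty)=c_j$ is finite --- it grows at most linearly at infinity, so it has a Herglotz representation $f^{(j)}(z)=A_j+B_j z+\int_\alpha^\beta\frac{d\tau_j(t)}{t-z}$ with $B_j\ge 0$ and $\tau_j\ge 0$, the measure being supported on $[\alpha,\beta]$ because $f^{(j)}$ extends analytically and real-analytically across the rest of the line. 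Substituting this into \eqref{T_functions}, the constant $A_j$ cancels and the difference quotient simplifies to
$$
   \frac{1}{f^{(j+1)}(z)}=\frac{f^{(j)}(z)-f^{(j)}(z_j)}{z-z_j}=B_j+\int_\alpha^\beta\frac{d\tau_j(t)}{(t-z)(t-z_j)}=B_j+\int_\alpha^\beta\frac{d\mu^{(j+1)}(t)}{z-t},\qquad d\mu^{(j+1)}(t):=\frac{d\tau_j(t)}{z_j-t}.
$$
By the ordering \eqref{ordering} we have $z_j>\beta\ge t$ for $t\in[\alpha,\beta]$, so the weight $z_j-t$ is positive and $\mu^{(j+1)}\ge 0$; together with $B_j\ge 0$ this already exhibits $1/f^{(j+1)}$ in the required form, with $c_{j+1}=B_j$ and $\supp(\mu^{(j+1)})\subset[\alpha,\beta]$. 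Moreover $\mu^{(j+1)}$ has infinite support: if it were finite then $1/f^{(j+1)}$, hence $f^{(j+1)}$, would be rational, and running \eqref{T_functions} backwards this would force $f^{(1)}=1/f$ and so $f$ to be rational, contradicting the infinite support of $\mu$.

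It remains to verify the extra hypothesis at level $j+1$, namely that $G_{j+1}=1/f^{(j+1)}$ has no zero in $\mathbb C\setminus[\alpha,\beta]$ --- this is the crux. Off the real axis it is immediate, since by the integral representation just obtained (with $\mu^{(j+1)}\not\equiv 0$) $G_{j+1}$ maps each open half plane into the opposite one. On $(\beta,+\infty)$ one has $G_{j+1}(z)=B_j+\int_\alpha^\beta\frac{d\mu^{(j+1)}}{z-t}>B_j\ge 0$. On $(-\infty,\alpha)$ --- the only remaining case, and only when $\alpha$ is finite --- I would use
$$
   G_{j+1}(z)=\frac{G_j(z_j)-G_j(z)}{(z-z_j)\,G_j(z)\,G_j(z_j)},\qquad z\in(-\infty,\alpha),
$$
together with the following facts from the inductive hypothesis: $z-z_j<0$; $G_j(z_j)>0$ because $G_j=c_j+\int\frac{d\mu^{(j)}}{z-\cdot}>c_j\ge 0$ on $(\beta,+\infty)$; $G_j$ is real and has constant sign on $(-\infty,\alpha)$ because it is zero-free there; and $G_j(z)<c_j<G_j(z_j)$, hence $G_j(z_j)-G_j(z)>0$, because $G_j$ is strictly decreasing on each component of $\mathbb R\setminus[\alpha,\beta]$ with limit $c_j$ at $\pm\infty$. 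These combine to show that $G_{j+1}$ has constant, nonzero sign on $(-\infty,\alpha)$, completing the induction.

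The main obstacle is precisely this last point: excluding zeros of $1/f^{(j)}$ (equivalently, spurious real poles of $1/f^{(j+1)}$) on $(-\infty,\alpha)$, which forces one to track signs of $G_j$ and $f^{(j)}$ on the real line throughout the induction rather than reasoning purely about integral representations. Everything else is routine manipulation of the Herglotz representation, the one substantive ingredient being that dividing the positive measure $\tau_j$ by the positive weight $z_j-t$ --- positivity of which is exactly the assumption $z_j>\beta$ built into \eqref{ordering} --- again yields a positive measure supported on $[\alpha,\beta]$.
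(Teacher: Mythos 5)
Your proof is correct, but it follows a genuinely different route from the paper's. The paper reduces everything to Taylor coefficients at a single node: it characterizes Markov functions by the definiteness of the Hankel matrices $\mathcal H_n^{(0)}$, $\mathcal H_n^{(1)}$ (Lemma~\ref{Lem_moments}, whose converse is proved by a Pad\'e/weak-star compactness argument), observes that the Taylor coefficients of $1/f^{(2)}$ at $z_1$ are the shifted coefficients of $f^{(1)}$, and transfers the sign information from $1/f^{(1)}$ to $f^{(1)}$ via the Hadamard bigradient determinant identity. You instead work directly with the function: $f^{(j)}=1/G_j$ is a Herglotz map of the upper half-plane, its Nevanlinna representation hands you the measure $\tau_j$ for free, and the divided difference in \eqref{T_functions} kills the affine part and divides $\tau_j$ by the positive weight $z_j-t$. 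What your approach buys is transparency about two points the determinant route obscures: (i) the genuinely nontrivial step is excluding zeros of $G_j$ (equivalently spurious real poles of $f^{(j)}$, i.e.\ point masses of $\tau_j$) on $(-\infty,\alpha)$, which you handle by the sign bookkeeping $G_j<c_j$ on $(-\infty,\alpha)$ versus $G_j(z_j)>c_j$; and (ii) the additive constant $c_j=B_{j-1}\ge 0$. On the latter you have caught a real inaccuracy: when $\mu$ has finite total mass $\theta$, $1/f^{(2)}(z)\to 1/\theta\neq 0$ at infinity, so $1/f^{(2)}$ is a Markov function only up to a positive additive constant, and the theorem (and the converse of Lemma~\ref{Lem_moments}, whose weak-star limit can lose exactly this constant to a pole escaping to $-\infty$) must be read accordingly. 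Since the downstream use of the theorem only needs $1/f^{(j)}$ positive and decreasing on $(\beta,+\infty)$, your corrected formulation is the right one and the application is unaffected. The only cosmetic gaps are that you do not spell out the base case of the zero-free hypothesis (clear, since a Markov function is non-real off $\mathbb R$ and of constant sign on each component of $\mathbb R\setminus[\alpha,\beta]$) and that for $\alpha=-\infty$ the Nevanlinna kernel needs its usual normalization, whose correction terms cancel in the divided difference exactly as your computation assumes.
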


Since a Markov function as in Theorem~\ref{Thm_Markov} is positive and decreasing in $(\beta,+\infty)$, we conclude with \eqref{ordering} that $f^{(j)}(z_k) > f^{(j)}(z_j) > 0$ for $k>j$, that is, the interpolating Thiele continued fraction of $f^{(1)}$ is positive.
\begin{Ex}
   Take $f^{(1)}(z)=\sqrt{z}$ such that $1/f^{(1)}(z)$ is a Markov function with support $[\alpha,\beta]=(-\infty,0]$, a limiting case of \eqref{sqrt}. Then the reader easily verifies by recurrence that $f^{(1)}_k=\sqrt{z_k}$ and, for $j \geq 2$,
   $$
        f^{(j)}(z)=\sqrt{z} + \sqrt{z_{j-1}}, \quad f_k^{(j)} = \sqrt{z_k} + \sqrt{z_{j-1}} > 0.
   $$
   In particular, also $1/f^{(j)}(z)$ is a Markov function with support $[\alpha,\beta]=(-\infty,0]$, and the interpolating Thiele continued fraction
   $$
                \sqrt{z}=\sqrt{z_1}
                + \cfr{z-z_1}{\sqrt{z_2}-\sqrt{z_1}} + \cfr{z-z_2}{\sqrt{z_3}-\sqrt{z_2}} +...
   $$
   is positive. We have not seen before such an explicit formula for the interpolating Thiele continued fraction, only the limiting case of Padé approximants, see, e.g., \cite[Theorem~5.9]{higham2008}.
\end{Ex}

We now state and prove our second main result of this subsection on the backward stability of the modified Thacher-Tukey algorithm: an error in finite precision in \eqref{T_diff}  gives parameters of a continued fraction with exact values at $z_k$ not far from the desired values $f^{(1)}(z_k)$, provided that we use the standard model \cite[Eqn.\ (2.4)]{High02} for finite precision arithmetic between real machine numbers. In our proof of this result we have been inspired by a similar result \cite[Theorem 4.1]{GM80} of Graves-Morris, who considered non necessarily positive Thiele continued fractions with pivoting \eqref{T_pivot}, made a first order error analysis and got an additional growth factor $2^k$ for the error which we are able to eliminate.

\begin{Thm}\label{Thm_finite_precision}
   Let $1/f^{(1)}$ be a Markov function as before, and
   suppose that the quantities $\widetilde f^{(j)}_k$ for $1\leq j \leq k \leq M$ are computed via \eqref{T_diff} using finite precision arithmetic with machine precision $\varepsilon$. Denote by $\widetilde R^{(1)}_{M}$ the (exact) continued fraction constructed with the (inexact) parameters $\widetilde f_1^{(1)},...,\widetilde f_M^{(M)}$ which are supposed to be $>0$ (despite finite precision, see Remark~\ref{Rem_finite_precision}).  Then %we have the following two backward stability result
   $$
          k=1,...,M : \quad | \widetilde R^{(1)}_{M}(z_k)- f^{(1)}(z_k) |   \leq \frac{3k \varepsilon}{1-3 k^2 \varepsilon} .| \widetilde R^{(1)}_{M}(z_k) |.
   $$
\end{Thm}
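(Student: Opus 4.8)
The plan is to carry out a backward error analysis of the recurrence \eqref{T_diff} for the reciprocal differences, combined with the fact (Theorem \ref{Thm_Markov}) that in the positive case there is no pivoting and the exact parameters satisfy $0 < f^{(j)}_j < f^{(j)}_k$ for $j<k$. The key point that lets us avoid the spurious growth factor $2^k$ of \cite[Theorem 4.1]{GM80} is to track the \emph{relative} error in the computed quantities rather than the absolute error, and to observe that the map $t \mapsto (z_k - z_j)/(t - f^{(j)}_j)$ appearing in \eqref{T_diff}, evaluated at two nearby positive arguments both exceeding $f^{(j)}_j$, is a contraction in the relative sense because $|t - f^{(j)}_j| \ge $ (a controlled fraction of) $|t|$ when $t > f^{(j)}_j > 0$. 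Concretely, I would first record, using the standard model \cite[Eqn.\ (2.4)]{High02}, that one step of \eqref{T_diff} in floating point produces $\widetilde f^{(j+1)}_k = (1+\theta)\,(z_k-z_j)/(\widetilde f^{(j)}_k - \widetilde f^{(j)}_j)$ with $|\theta|\le \gamma_3 := 3\varepsilon/(1-3\varepsilon)$ (three rounding errors: one subtraction in the denominator, one in the numerator which is in fact exact if $z_k,z_j$ are data, and one division — I would be a little careful about exactly which operations are rounded and absorb any constant into the ``$3k$'').

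Next I would define, for each pair $j\le k$, the quantity $R^{(j)}_M(z_k)$ obtained by running the \emph{exact} backward evaluation \eqref{T_evaluation} on the \emph{computed} parameters $\widetilde f^{(i)}_i$; by construction $\widetilde R^{(1)}_M(z_k)$ is the $j=1$ instance, and $\widetilde R^{(k)}_M(z_k) = \widetilde f^{(k)}_k$. The idea is to prove by downward induction on $j$ (from $j=k$ to $j=1$) a bound of the form
$$
   \Bigl| R^{(j)}_M(z_k) - f^{(j)}(z_k) \Bigr| \le c_{k,j}\,\varepsilon \, \bigl| R^{(j)}_M(z_k) \bigr|,
   \qquad c_{k,j} \le 3(k-j+1) + (\text{lower order}),
$$
using the relation $R^{(j)}_M(z_k) = f^{(j)}_j + (z_k-z_j)/R^{(j+1)}_M(z_k)$ on the computed side versus $f^{(j)}(z_k) = f^{(j)}_j + (z_k-z_j)/f^{(j+1)}(z_k)$ on the exact side, together with the perturbation in $f^{(j)}_j$ itself (which is $\widetilde f^{(j)}_j$, carrying an error inherited from level $j-1$ — so really the induction should be organized so that the errors in the stored parameters $\widetilde f^{(i)}_i$ are controlled first, and then fed into the evaluation). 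The positivity $f^{(j)}(z_k) > f^{(j)}(z_j) = f^{(j)}_j > 0$ is what guarantees that the denominators never come close to zero and that the relative-error amplification at each of the $k$ levels is $1 + O(\varepsilon)$ rather than a constant $>1$; summing $k$ contributions of size $3\varepsilon$ and collecting the $O(\varepsilon^2)$ cross terms into the denominator $1 - 3k^2\varepsilon$ yields the stated bound. Finally, since $\widetilde R^{(1)}_M(z_k) - f^{(1)}(z_k)$ is exactly the $j=1$, level-$k$ instance of this estimate, the theorem follows.

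The main obstacle I anticipate is bookkeeping the interaction between two distinct sources of error in a way that keeps the constant linear in $k$: the error committed when \emph{forming} the parameter $\widetilde f^{(i)}_i$ via \eqref{T_diff} (which propagates forward through all later levels through the appearance of $\widetilde f^{(i)}_k$ for $k>i$), and the error committed when \emph{evaluating} the continued fraction backwards via \eqref{T_evaluation}. A naive treatment double-counts these and reintroduces a factor like $2^k$; the fix of \cite{GM80} in first order must be made rigorous. I expect the clean way is to phrase everything in terms of the exact functions $f^{(j)}(\cdot)$ of \eqref{T_functions}: show that the computed parameters $\widetilde f^{(i)}_i$ are the \emph{exact} values at $z_i$ of a \emph{perturbed} family $\widehat f^{(i)}$ built by the same recurrence but with the interpolation nodes $z_k$ replaced by slightly perturbed nodes, or equivalently with $f^{(1)}$ replaced by a nearby function — i.e.\ a genuine backward-stability statement — and then invoke the Lipschitz dependence of a positive Thiele convergent on its (positive) parameters, which is benign precisely because of the interlacing \eqref{ordering} and the positivity. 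Verifying that this perturbed family stays positive (so that Remark \ref{Rem_finite_precision} applies and no pivoting is triggered) is the delicate quantitative step, and it is exactly where the hypothesis that $1/f^{(1)}$ is a Markov function — hence every $1/f^{(j)}$ is, by Theorem \ref{Thm_Markov} — does the real work.
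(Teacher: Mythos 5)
Your overall strategy (induction, relative-error tracking, positivity to remove the $2^k$ growth factor) points in the right direction, but the induction invariant you propose is the wrong one, and the mechanism you invoke to control each step is stated backwards. You claim the map $t\mapsto (z_k-z_j)/(t-f^{(j)}_j)$ is benign because $|t-f^{(j)}_j|\ge c\,|t|$ for $t>f^{(j)}_j>0$. That lower bound is false: nothing in \eqref{ordering} or the Markov hypothesis prevents $f^{(j)}(z_k)$ from being very close to $f^{(j)}(z_j)$, so the subtraction in \eqref{T_diff} can cancel catastrophically and the relative \emph{forward} error of $\widetilde f^{(j+1)}_k$ with respect to $f^{(j+1)}(z_k)$ need not be $O(\varepsilon)$. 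What positivity actually buys is the \emph{opposite} inequality, $0<\widetilde R^{(j)}_M(z_k)-\widetilde f^{(j)}_j\le \widetilde R^{(j)}_M(z_k)$ (a difference of two positive quantities is bounded above by the larger one); this is the single place where the hypothesis $\widetilde f^{(j)}_j>0$ enters, and it is precisely what replaces the bound $|\widetilde f^{(j)}_k-\widetilde f^{(j)}_j|\le 2|\widetilde f^{(j)}_k|$ of Graves-Morris and hence eliminates the factor $2^k$.

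Consequently your invariant $|R^{(j)}_M(z_k)-f^{(j)}(z_k)|\le c_{k,j}\,\varepsilon\,|R^{(j)}_M(z_k)|$ with $c_{k,j}$ linear in $k-j$ cannot be established: already its base case $j=k$ reads $|\widetilde f^{(k)}_k-f^{(k)}(z_k)|\lesssim \varepsilon\,|\widetilde f^{(k)}_k|$, a forward-stability statement about the computed reciprocal differences that fails under the cancellation just described (the paper explicitly defers forward bounds to Remark~\ref{Rem_finite_precision} as ``quite involved''). The theorem is a purely backward result, and its proof never compares anything with $f^{(j)}$ for $j\ge 2$. The workable invariant is $\widetilde f^{(j)}_k=(1+\delta_{j,k})\,\widetilde R^{(j)}_M(z_k)$ with $|\delta_{j,k}|\le 3(k-j)\varepsilon/(1-3(k-j)^2\varepsilon)$, proved by induction on $k-j$ with the trivial base case $\widetilde R^{(j)}_M(z_j)=\widetilde f^{(j)}_j$; one writes $\widetilde f^{(j)}_k-\widetilde R^{(j)}_M(z_k)=\bigl(\tfrac{1+\epsilon_{j+1,k}}{1+\delta_{j+1,k}}-1\bigr)\bigl(\widetilde R^{(j)}_M(z_k)-\widetilde f^{(j)}_j\bigr)$ and applies the upper bound above. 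The exact function enters exactly once, at $j=1$, through the initial rounding $f^{(1)}(z_k)=\widetilde f^{(1)}_k(1+\epsilon_{1,k})$. Note also that positivity of the computed parameters is \emph{assumed} in the statement, so the ``delicate quantitative step'' you identify at the end is not part of this proof; your alternative of realizing the computed parameters as exact data of a perturbed function is closer in spirit to what is needed, but as sketched it is not carried out and its key Lipschitz estimate is essentially equivalent to the induction above.
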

\begin{proof}
   The standard model for finite precision arithmetic of \cite[Eqn.(2.4)]{High02}
   %\footnote{Cite the book of N. Higham on finite precision, in particular his lemma }
   gives the following finite precision counterpart of \eqref{T_diff}: for $k>j$
   $$
        f^{(1)}(z_k) = \widetilde f_k^{(1)}  (1+\epsilon_{1,k}), \quad
        \widetilde f_k^{(j+1)}= \frac{z_k-z_j}{\widetilde f_k^{(j)}-\widetilde f_j^{(j)}} (1+\epsilon_{j+1,k}),
   $$
   where $\epsilon_{1,k}$ comes from rounding $f^{(1)}(z_k)$, the term $\epsilon_{j+1,k}$ translates errors in the two subtractions and the division, and $|\epsilon_{j,k}|\leq \frac{3\varepsilon}{1-3\varepsilon}$ by \cite[Lemma 3.1]{High02}.
   In accordance to \eqref{T_evaluation}, we consider the rational functions defined by
   $$
    \widetilde R_M^{(M)}(z)=\widetilde f_M^{(M)} , \quad \mbox{and for~~} j=M-1,M-2,..., 1 : \quad \widetilde R_M^{(j)}(z) = \widetilde f_{j}^{(j)} +
    \frac{z-z_j}{\widetilde R^{(j+1)}_M(z)} ,
   $$
   and claim that
   \begin{equation} \label{T_claim}
          \widetilde f^{(j)}_k = (1+\delta_{j,k}) \widetilde R_M^{(j)}(z_k)  , \quad |\delta_{j,k}| \leq \gamma_{k-j} , \quad \gamma_\ell = \frac{3\ell \varepsilon}{1-3\ell^2 \varepsilon}.
   \end{equation}
   We argue by recurrence on $k-j$ and notice that the case $k=j$ is trivial since $\widetilde R_M^{(j)}(z_j)=\widetilde f^{(j)}_j$ by definition. In case $k>j$ we may write
   \begin{eqnarray*} &&
       \widetilde f^{(j)}_k - \widetilde R_M^{(j)}(z_k) =
       \widetilde f^{(j)}_k -\widetilde f^{(j)}_j - (\widetilde R_M^{(j)}(z_k) -\widetilde f^{(j)}_j)
       \\&&  =
       \frac{z_k-z_j}{\widetilde f^{(j+1)}_k} (1+\epsilon_{j+1,k})
        - (\widetilde R_M^{(j)}(z_k) -\widetilde f^{(j)}_j)
        =
       \Bigl( \frac{1+\epsilon_{j+1,k}}{1+\delta_{j+1,k}} - 1 \Bigr)
        (\widetilde R_M^{(j)}(z_k) -\widetilde f^{(j)}_j).
   \end{eqnarray*}
   Our claim \eqref{T_claim} then follows by observing\footnote{Without this positivity assumption, \cite[Theorem 4.1]{GM80} observed with \eqref{T_pivot} that, up to $\mathcal O(\varepsilon)$, we have that $|\widetilde R_M^{(j)}(z_k) -\widetilde f^{(j)}_j|\approx |\widetilde f^{(j)}_k -\widetilde f^{(j)}_j| \leq |\widetilde f^{(j)}_k| + |\widetilde f^{(j)}_j|
   \leq 2 |\widetilde f^{(j)}_k|\approx 2 \,|\widetilde R_M^{(j)}(z_k)|$, leading to some exponentially increasing growth factor.} that $|\widetilde R_M^{(j)}(z_k) -\widetilde f^{(j)}_j|=\widetilde R_M^{(j)}(z_k) -\widetilde R_M^{(j)}(z_j)\leq \widetilde R_M^{(j)}(z_k)$ by assumption $\widetilde f^{(j)}_j>0$ for $j=1,...,M$, and by the inequality
   $$
        | \frac{1+\epsilon_{j+1,k}}{1+\delta_{j+1,k}} - 1 | \leq
        \frac{|\epsilon_{j+1,k}| + \gamma_{k-j-1}}{1-\gamma_{k-j-1}} \leq \gamma_{k-j}.
   $$
   In a similar manner, we deduce the assertion of the Theorem from \eqref{T_claim} for $j=1$.
\end{proof}

\begin{figure}[!h]
	\centering
%	\begin{minipage}[c]{1\textwidth}
		\includegraphics[width=1\linewidth]{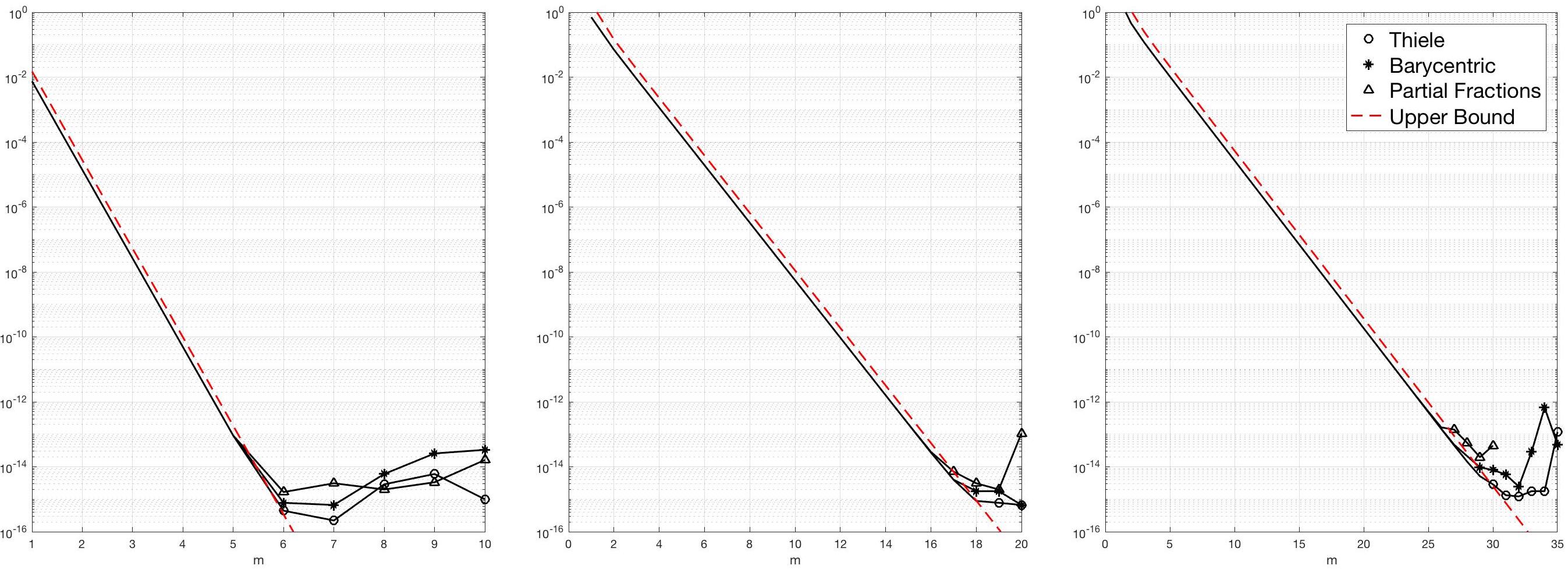}
		\caption{Relative $L^\infty$ error on the interval $[c,d]$ of rational interpolants of type $[m-1|m]$ of the Markov function $f(z)=1/\sqrt{z}$, with $\alpha=-\infty,\beta=0$, $d=1$ and $c\in \{ 1/2,10^{-3},10^{-6}\}$ (from the left to the right). For each $c$ and $m$, we take the quasi-optimal interpolation points of \eqref{optimal_nodes} (depending on $m$ and $\alpha,\beta,c,d$), and show the relative error of the same rational interpolant (black solid line), computed with three different methods: the partial fraction decomposition of \S\ref{sec3.1} (triangle markers), the barycentric representation of \S\ref{sec3.2} (star markers) and finally the Thiele interpolating continued fraction of \S\ref{sec3.3} (circle markers). The fourth graph (red dashed) gives the a priori upper bound \eqref{optimal_nodes2} of Corollary~\ref{cor_a_priori}. }\label{fig_representation1}
%	\end{minipage}
\end{figure}

\begin{Rem}\label{Rem_finite_precision}
   Extensive numerical experiments showed us that the parameters $\widetilde f^{(j+1)}_k$ of Theorem~\ref{Thm_finite_precision} only fail to be positive if the error $R_j^{(1)}(z)-f^{(1)}(z)$ for $z\in [c,d]$ is already close to machine precision. To prove such a statement, one requires a (rough) forward stability result on $\widetilde f_k^{(j)}-f_k^{(j)}$ which seems to be possible but quite involved, we omit details.
%   In our implementation of the Thiele interpolating continued fraction with pivoting \eqref{T_pivot} we stop the computations and return $\widetilde R_j^{(1)}$ if one of the parameters $\widetilde f^{(j+1)}_k$ becomes negative or if a permutation occurs due to pivoting.
   %\footnote{Jene suis pas sur de cette condition d'arret. Peut-etre il faudra essayer l'alternative que l'on applique le pivotage \eqref{T_pivot}, et on affiche si le pivotage a ete vraiment necessaire, car dans ce cas on passe ne notre backward stability a celle de Graves-Morris, avec un facteur de croissance exponentiel.}
\end{Rem}

\begin{Ex}
   In Figure~\ref{fig_representation1} we represent the relative $L^\infty([c,d])$ error of the same interpolants $r_m$ for the same Markov function $f(z)=1/\sqrt{z}$ and interpolation points \eqref{optimal_nodes} depending on $m$, computed with the three different methods discussed so far. Here we have discretized $[c,d]$ by $500$ cosine points, the entries of some diagonal matrix $A$. Recall that, in exact arithmetic, all curves should have identical behavior, and stay below the a priori upper bound \eqref{optimal_nodes2}. However, in finite precision arithmetic we observe that, once the method and the value of $c$ is fixed, the corresponding error polygon crosses the upper bound once and, afterwards, does hardly decrease, and sometimes even increases. We use markers on the error curves for indices $m$ which have been rejected by our stopping criterion of Remark~\ref{rem_stopping}. If we denote by $m'$ the index such that $m'+1$ is the first rejected index, the error of $m'$ is below the a priori bound, and the crossing happens between the indices $m'$ and $m'+1$. Also, we observe without theoretical evidence that the error for any $m>m'$ is never smaller than $1/10$ times the error for $m'$. This confirms that our stopping criterion works well in practice. Notice that, for any of the three methods, the final relative error is about the same size (not far from machine precision), and increases only modestly with $d/c$. In Section~\ref{sec_experiences} we will see that such a behavior is no longer true if we evaluate our interpolants at general matrix arguments instead of scalar arguments.
\end{Ex}

%\textcolor{red}{From Figure~\ref{fig_representation1} we see that we should not use the partial fraction approach of \S\ref{sec3.1}, and that the Thiele approach of \S\ref{sec3.3} seems to be the most reliable. In addition, our stopping criterion strategy of Remark~\ref{rem_stopping} finds automatically the index $m$ leading to smallest relative error.}

%\begin{Rem}\label{stopping} \ref{cor_posteriori}
%   Also, a stopping strategy is necessary for our barycentric approach of \S\ref{sec3.2} and the Thiele continued fraction of \S\ref{sec3.3}. Such a stopping criterion can be deduced from the first part of Corollary~\ref{cor_posteriori} together with \\eqref{optimal_nodes2}, we omit details here, since details are given later in for the particular that residual bound of Corollary Such a stopping criterion cannot be only based in our a priori upper bound, unless we are aiming for low precision. We do not have a satisfying answer for the barycentric case. However, the Thiele polygons cross the a priori bound for $m\geq 7,17,29$ (from the left to the right), which exactly is the set of indices where the numerical parameters are no longer $>0$, or a pivoting was necessary. Thus, instead of returning a lower degree rational function as suggested in Remark~\ref{Rem_finite_precision}, we should stop to compute $r_m$ for $m\geq M$ once a wrong sign or a permutation is detected for index $m=M$. Though we have no theoretical evidence, such a stopping criterion seems to work well in practice, also for other examples.
%\end{Rem}}

It still remains to present a proof of Theorem~\ref{Thm_Markov} which will be based on the following Lemma which is partly known from the classical Stieltjes moment problem up to a change of variables, see for instance \cite[Sections 5.2 et 5.3]{BGM96} or \cite[Thm V.4.4]{bra86}. In the remainder of this section we suppose that $\alpha<\beta <z_0$. For a function $g$ analytic in some neighborhood of $z_0$ the Hankel matrices are defined with help of the Taylor coefficients of $g$ at $z_0$
\begin{equation} \label{M_Hankel}
    \mathcal H_n^{(\ell)}(g) =
    \left[\begin{array}{cccc} g_{\ell} & g_{\ell+1} & \cdots & g_{n+\ell}
    \\ g_{\ell+1} & g_{\ell+2} & \cdots & g_{n+\ell+1}
    \\ \vdots & \vdots & & \vdots \\
    g_{n+\ell} & g_{n+\ell+1} & \cdots & g_{2n+\ell}
\end{array}\right] , \quad g(z) = \sum_{j=0}^\infty g_j (z-z_0)^j .
\end{equation}
The following lemma will be applied for $z_0\in \{ z_1,z_2,...\}$ with the $z_1,z_2,..$ as in \eqref{ordering}.

\begin{Lemme}\label{Lem_moments}
    If $f$ is a Markov function with measure $\mu$ having an infinite support included in $[\alpha,\beta]$ then, for all $n\geq 0$,  the Hankel matrices $\mathcal H_n^{(0)}(f)$ are positive definite, and the Hankel matrices $\mathcal H_n^{(1)}(f)$ are negative definite. \\
    Conversely, if $f$ is analytic in $\mathbb C \setminus [\alpha,\beta]$, with Hankel matrices $\mathcal H_n^{(0)}(f)$ positive definite and $\mathcal H_n^{(1)}(f)$ negative definite for all $n\geq 0$, then $f$ is a Markov function with measure $\mu$ having an infinite support included in $[\alpha,\beta]$.
\end{Lemme}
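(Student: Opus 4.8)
The plan is to transport the whole statement, via the Möbius substitution $t=1/(x-z_0)$, to the classical (Hamburger/Stieltjes) moment problem recalled in \cite[Sections~5.2--5.3]{BGM96} and \cite[Thm~V.4.4]{bra86}, and then, for the converse only, to invest a little extra work in localising the representing measure inside $[\alpha,\beta]$. Throughout I keep the expansion point $z_0>\beta$ of \eqref{M_Hankel} fixed, write $g_j$ for the $j$-th Taylor coefficient at $z_0$ of the function under consideration, and use that $x\mapsto t=1/(x-z_0)$ is a decreasing homeomorphism of $[\alpha,\beta]$ (the point $x=\alpha=-\infty$, if present, mapped to $t=0$) onto the compact interval $[\,1/(\beta-z_0),\,1/(\alpha-z_0)\,]\subset(-\infty,0]$.

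For the direct implication I would expand $\frac{1}{z-x}=\sum_{j\ge0}(-1)^j(x-z_0)^{-j-1}(z-z_0)^j$ under the integral, so that $g_j=(-1)^j\int\frac{d\mu(x)}{(z_0-x)^{j+1}}$, and then read off, for $v=(v_0,\dots,v_n)^T$ and $P_v(x)=\sum_{i=0}^n v_i(x-z_0)^{-i}$, the identities
\[
 v^T\mathcal H_n^{(0)}(f)\,v=\int\frac{P_v(x)^2}{z_0-x}\,d\mu(x),\qquad
 v^T\mathcal H_n^{(1)}(f)\,v=-\int\frac{P_v(x)^2}{(z_0-x)^2}\,d\mu(x).
\]
Since $z_0-x>0$ on $\supp(\mu)\subset[\alpha,\beta]$ and $P_v$ (having at most $n$ zeros) can vanish on the infinite set $\supp(\mu)$ only if $v=0$, the first form is strictly positive and the second strictly negative for every $v\ne0$, which is exactly the claimed definiteness; equivalently, $g_j=\int t^j\,d\sigma(t)$ where $\sigma\ge0$ is the push-forward of $d\mu(x)/(z_0-x)$ under $t=1/(x-z_0)$, an infinitely supported measure on a compact subinterval of $(-\infty,0)$.

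For the converse I would first use positive definiteness of all $\mathcal H_n^{(0)}(f)$ to obtain, via Hamburger's theorem and the fact that a finitely supported measure makes some Hankel matrix singular, an infinitely supported $\sigma\ge0$ on $\mathbb R$ with $g_j=\int t^j\,d\sigma(t)$; then analyticity of $f$ on $\mathbb C\setminus[\alpha,\beta]\ni z_0$ gives radius of convergence $\ge z_0-\beta$, hence $\limsup_j|g_j|^{1/j}\le 1/(z_0-\beta)$, which together with $g_{2k}\ge R^{2k}\sigma(\{|t|>R\})$ forces $\supp(\sigma)$ into $[-1/(z_0-\beta),\,1/(z_0-\beta)]$; and negative definiteness of all $\mathcal H_n^{(1)}(f)$, i.e.\ $\int t\,p(t)^2\,d\sigma(t)\le0$ for every polynomial $p$, localises $\supp(\sigma)\subset(-\infty,0]$. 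Because $\sigma(\{0\})=\lim_{z\to+\infty}f(z)$, which vanishes for a function of Markov type, the mass at $t=0$ (the image of $x=\infty$) is absent, so I can put $x=z_0+1/t$, $d\mu(x)=(z_0-x)\,d\sigma(t)\ge0$, obtaining an infinitely supported $\mu$ in $(-\infty,\beta]$; summing the (now absolutely convergent) geometric series gives $f(z)=\int\frac{d\sigma(t)}{1-(z-z_0)t}=\int\frac{d\mu(x)}{z-x}=f^{[\mu]}(z)$ near $z_0$, and this propagates to all of the connected set $\mathbb C\setminus(-\infty,\beta]$ by analytic continuation.

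The last task — upgrading $\supp(\mu)\subset(-\infty,\beta]$ to $\supp(\mu)\subset[\alpha,\beta]$ — is the one place where the analyticity of $f$ \emph{beyond a neighbourhood of $z_0$} enters, and I expect it to be the main obstacle. If $\alpha=-\infty$ nothing is needed; if $\alpha$ is finite and some $x_0\in\supp(\mu)$ had $x_0<\alpha$, I would take an open interval $U\ni x_0$ with $\overline U\subset(-\infty,\alpha)$: on $U$ the function $f$ is analytic by hypothesis and coincides off $\mathbb R$ with $f^{[\mu]}$, so $f^{[\mu]}$ extends analytically across $U\cap\mathbb R$, is real there (from $f^{[\mu]}(\bar z)=\overline{f^{[\mu]}(z)}$), and the Stieltjes inversion formula then gives $\mu(U\cap\mathbb R)=-\tfrac1\pi\lim_{\varepsilon\downarrow0}\int_{U\cap\mathbb R}\operatorname{Im}f^{[\mu]}(x+i\varepsilon)\,dx=0$, contradicting $x_0\in\supp(\mu)$. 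So $f=f^{[\mu]}$ is the desired Markov function. Beyond this endpoint argument, the delicate bookkeeping is in handling the limiting case $\alpha=-\infty$ consistently (where $t=0$ corresponds to $x=-\infty$) and in making rigorous the three support-localisations above — boundedness from the radius of convergence, one-sidedness from the sign of $\mathcal H_n^{(1)}$, and the absence of an atom at the point at infinity — rather than merely plausible.
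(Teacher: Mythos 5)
Your first half coincides with the paper's own argument (the quadratic form $y^T\mathcal H_n^{(\ell)}(f)y$ is an integral of $P(x)^2/(z_0-x)^{\ell+1}$ against $\mu$, of fixed sign because the support is infinite), so there is nothing to add there. For the converse you take a genuinely different route: you pass to the classical Hamburger moment problem via $t=1/(x-z_0)$, obtain a representing measure $\sigma$ from positive definiteness of the $\mathcal H_n^{(0)}$, confine its support to a compact set by the radius-of-convergence estimate $g_{2k}\ge R^{2k}\sigma(\{|t|>R\})$, push it into $(-\infty,0]$ using $\int t\,p(t)^2\,d\sigma\le 0$, and transform back. The paper instead constructs the measure from the Padé approximants of type $[m-1|m]$ of $f$ at $z_0$: a determinant representation of $q_m$, a three-term recurrence and a Sturm-sequence argument show that $r_m$ is the Cauchy transform of a positive discrete measure $\mu_m$, and a weak-star limit of the $\mu_m$ furnishes $\widetilde\mu$. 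Your route is shorter and rests on standard moment-problem facts; the paper's is constructive and exhibits the orthogonal-polynomial structure it reuses elsewhere. Both proofs end with the same analytic-continuation step to force the support into $[\alpha,\beta]$, and your Stieltjes-inversion argument for the finite-$\alpha$ endpoint is a correct way of making that step precise.

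The one step that does not close is the removal of the atom of $\sigma$ at $t=0$. You justify it by ``$\sigma(\{0\})=\lim_{z\to+\infty}f(z)$, which vanishes for a function of Markov type'' --- but that $f$ is of Markov type is the conclusion, not a hypothesis, so this is circular. Moreover the stated hypotheses genuinely do not exclude the atom: for $c>0$ the function $f=c+f^{[\mu]}$ is analytic in $\mathbb C\setminus[\alpha,\beta]$, the matrix $\mathcal H_n^{(0)}(f)$ differs from $\mathcal H_n^{(0)}(f^{[\mu]})$ only in its top-left entry $g_0$, which increases by $c$ (so it stays positive definite), and $\mathcal H_n^{(1)}(f)=\mathcal H_n^{(1)}(f^{[\mu]})$ since $g_0$ never enters $\mathcal H_n^{(1)}$ --- yet $f$ is not a Markov function. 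So the converse requires the additional normalization $f(z)\to 0$ as $z\to+\infty$ (or the conclusion must be weakened to ``Markov function plus a nonnegative constant''); once that hypothesis is added, your identity $\sigma(\{0\})=\lim_{z\to+\infty}f(z)$, which does follow from the continued representation $f=\sigma(\{0\})+f^{[\mu]}$ on $\mathbb C\setminus(-\infty,\beta]$, legitimately kills the atom. It is worth noting that the paper's own proof is exposed at exactly the same point: if mass of the $\mu_{m_\ell}$ escapes to $-\infty$, the $k=0$ moment $\int\frac{d\mu_{m_\ell}(x)}{z_0-x}$ need not converge to $\int\frac{d\widetilde\mu(x)}{z_0-x}$ in the weak-star limit the paper invokes. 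You have therefore isolated the genuinely delicate point of the lemma; you only need to state the missing normalization instead of assuming it.
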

\begin{proof}
    A proof of the first part is elementary, noticing that the Taylor coefficients are moments of $\mu$ given by $f_j = \int \frac{d\mu(x)}{(z_0-x)(x-z_0)^j}$, leading to an integral expression of $y^T \mathcal H_n^{(\ell)}(f)y$ for any $y\in \mathbb R^{n+1}$ with a unique sign depending only on the parity of $\ell$ provided that $y\neq 0$, for details see \cite{these_Bisch}.\\
    To show the converse implication, one considers $r_m=p_m/q_m$ being the Padé approximant of type $[m-1|m]$ of $f$ at $z_0$. The sign assumption on the Hankel determinants allows to conclude that $q_m$ has a determinant representation $q_m(z)=\det\Big( \mathcal{H}^{(0)}_{m-1}(f)-(z-z_{0})\mathcal{H}^{(1)}_{m-1}(f) \Big)$. Moreover, there is a three term recurrence between three consecutive denominators with the sign of the coefficients being known. An additional Sturm sequence argument allows us to conclude that $r_m=p_m/q_m$ has $m$ distinct poles $x_{1,m},...x_{m,m}\in (-\infty,z_0)$ and positive residuals $a_{j,m}$, that is
    \begin{equation}\label{finite_measure}
         r_m(z) = \frac{p_m(z)}{q_m(z)} = \int \frac{d\mu_m(x)}{z-x} , \quad \mu_m = \sum_{j=1}^m a_{j,m} \delta_{x_{j,m}} .
    \end{equation}
    As in \cite[Proof of Thm. V.4.4]{bra86}, there exists a subsequence $(\mu_{m_\ell})_\ell$ of $(\mu_m)_m$ having the weak-star limit $\widetilde \mu$, $\supp(\widetilde \mu)\subset (-\infty,z_0]$, and for all $k \geq 0$
    $$
           \lim_{\ell \to \infty} \int \frac{d\mu_{m_\ell}(x)}{(z_0-x)^{k+1}}
           = \int \frac{d\widetilde \mu(x)}{(z_0-x)^{k+1}} = (-1)^k \frac{g^{(k)}(z_0)}{k!},
    $$
    with the Markov function $g(z)=\int\frac{d\widetilde \mu(x)}{z-x}$.
    From the interpolation conditions of a Padé approximant of $f$ at $z_0$ we also know that, for $k \leq 2m$, 
    $$
            \int\frac{d\mu_m(x)}{(z_0-x)^{k+1}} =
              (-1)^k \frac{r_m^{(k)}(z_0)}{k!} = (-1)^k \frac{f^{(k)}(z_0)}{k!}.          
    $$
    Combining these two relations we find that $g^{(k)}(z_0)$ is finite, and 
    $g^{(k)}(z_0)=f^{(k)}(z_0)$ for all $k\geq 0$. In particular, with $f$ also $g$ is analytic in a neighborhood $U$ of $z_0$, and $f(z)=g(z)$ for all $z\in U$. 
    Recalling that by assumption $f$ is analytic in $\mathbb C \setminus [\alpha,\beta]$, we see that our Markov function $g$ for the measure $\widetilde \mu$ has an analytic continuation $f$ in $\mathbb C \setminus [\alpha,\beta]$, and hence $\supp(\widetilde \mu)\subset [\alpha,\beta]$. Thus also the converse implication is true. 
%%    . We may conclude that $p_m/q_m$ tends to the Markov function $g(z)=\int\frac{d\widetilde \mu(x)}{z-x}$ on any compact subset of $\mathbb C \setminus (-\infty,z_0]$.
    %\footnote{Ici j'ai abrégé pas mal, le raisonnement me semble etre assez proche du raisonnement connu de Stieltjes. Mais il me semble que l'on n'avait pas demontre que $f=g$, ce qui est le but des considerations suivantes...}
%%    We now observe that, for any $\gamma>z_0$ and $k\geq 0$,
 %%   \begin{eqnarray*}
 %%             0 &<& (-1)^k \frac{g^{(k)}(\gamma)}{k!} 
%%              = \int\frac{d\widetilde \mu(x)}{(\gamma-x)^{k+1}} 
%%              = \lim_{m\to \infty} \int\frac{d\mu_m(x)}{(\gamma-x)^{k+1}} 
%%              \\          &\leq& \lim_{m\to \infty} \int\frac{d\mu_m(x)}{(z_0-x)^{k+1}} =
%%              \lim_{m\to \infty}  (-1)^k \frac{r_m^{(k)}(z_0)}{k!} = (-1)^k \frac{f^{(k)}(z_0)}{k!},
%%    \end{eqnarray*} 
 %%   the last equality following from the interpolation condition of a Padé approximant of $f$ at $z_0$. By assumption on $f$, the $k$th root of the expression on the right has a $\lim\sup>0$ not depending on the choice of $\gamma$. By choosing $\gamma$ sufficiently close to $z_0$ we conclude that $g$ is analytic in a neighborhood of $z_0$, and hence $z_0\not\in \supp(\widetilde \mu)$. 
%%    Finally, the more precise inclusion $\supp(\mu)\subset [\alpha,\beta]$ is a consequence of the fact the Markov function $g$ cannot have an analytic continuation in a neighborhood of a point in $\supp(\mu)$. We again refer the reader to \cite{these_Bisch} for further details.
\end{proof}

\begin{proof}[Proof of Theorem~\ref{Thm_Markov}.]
   We only need to show this statement for $j=1$. Let $f^{(1)}=1/f$, with $f$ a Markov function with measure $\mu$ having an infinite support included in $[\alpha,\beta]$.
   Since $f(z)\neq 0$ for $z\not\in [\alpha,\beta]$, we conclude that $f^{(1)}$ is analytic in $\mathbb C \setminus [\alpha,\beta]$, and that the same is true for
   $$
        g(z)=\frac{f^{(1)}(z)-f^{(1)}(z_1)}{z-z_1} .
   $$
   Moreover, since $f$ is non-real in $\mathbb C \setminus \mathbb R$ and strictly decreasing in $\mathbb R \setminus [\alpha,\beta]$, we also observe using \eqref{T_functions} that $f^{(2)}=1/g$
   is analytic in $\mathbb C\setminus [\alpha,\beta]$. As a consequence of the first part of Lemma~\ref{Lem_moments} applied to $f$, the Hankel matrices
   $$
          \mathcal H_n^{(0)}(\frac{1}{f^{(1)}}) = \mathcal H_n^{(0)}(f) , \quad \mbox{and} \quad
          - \mathcal H_n^{(1)}(\frac{1}{f^{(1)}}) = - \mathcal H_n^{(1)}(f)
   $$
   are positive definite for all $n\geq 0$. According to  the second part of Lemma~\ref{Lem_moments} applied to $g=1/f^{(2)}$, it only remains to show that the Hankel matrices
   $$
          \mathcal H_n^{(0)}(\frac{1}{f^{(2)}}) = \mathcal H_n^{(1)}(f^{(1)}) , \quad \mbox{and} \quad
          - \mathcal H_n^{(1)}(\frac{1}{f^{(2)}}) = - \mathcal H_n^{(2)}(f^{(1)})
   $$
   are positive definite for all $n\geq 0$. The latter is a consequence of the Hadamard bigradient identity \cite[Thm 2.4.1]{BGM96}:
   $$
            \det H_n^{(m)}(f^{(1)}) = (-1)^{(n+1)+(m-1)(m-2)/2} f^{(1)}(z_1)^{m+2n+1} \det H_{m+n-1}^{(2-m)}(\frac{1}{f^{(1)}}),
   $$
   since then
   \begin{eqnarray*} &&
       \det \mathcal H_n^{(0)}(\frac{1}{f^{(2)}}) = \det \mathcal H_n^{(1)}(f^{(1)})
       = (-1)^{n+1} f^{(1)}(z_1)^{2n+2} \det H_{n}^{(1)}(\frac{1}{f^{(1)}}) > 0 ,
       \\&&
       (-1)^{n+1} \det \mathcal H_n^{(1)}(\frac{1}{f^{(2)}}) = (-1)^{n+1} \det \mathcal H_n^{(2)}(f^{(1)})
       = f^{(1)}(z_1)^{2n+3} \det H_{n+1}^{(0)}(\frac{1}{f^{(1)}}) > 0 ,
   \end{eqnarray*}
   as required to conclude.
\end{proof}

    To summarize, in Section~\ref{sec3} we addressed the question how to represent and compute the rational interpolant $r_m$ for given real interpolation nodes in finite precision arithmetic (since, as reported in \cite[Sections~2.1]{BGM96}, a naive implementation might lead to a loss of at least $m$ decimal digits of precision). Here we compare three approaches: firstly the partial fraction decomposition of \S\ref{sec3.1} promoted by Mayo and Antoulas \cite{MAYO07} where the poles are the eigenvalues of a Loewner matrix pencil and the residuals are found through a least square problem. Secondly we analyze the barycentric representation of \S\ref{sec3.2} which recently \cite{BB18} has been used quite successfully for stabilizing the rational Remez algorithm, and for which backward and forward results are known for evaluating such rational functions in finite precision arithmetic. Finally we consider the Thiele interpolating continued fraction in \S\ref{sec3.3} which generalizes the concept of Stieljes continued fraction representation of Padé approximants of Markov functions.  Our main original contributions in this section are Theorem~\ref{Thm_Markov} showing that parameters of the Thiele interpolating continued fraction of a Markov function are positive, and Theorem~\ref{Thm_finite_precision} where we provide a proof of backward stability of Thiele interpolating continued fractions improving a result of Graves-Morris \cite[Theorem 4.1]{GM80}.
    Numerical experiences presented in Figure~\ref{fig_representation1} show that any of these three methods combined with our stopping criterion of Remark~\ref{rem_stopping} allows to attain nearly machine precision for scalar arguments, but this will be no longer true for matrix arguments.

\section{Functions of Toeplitz-like matrices}\label{sec4}

In the last years, several authors tried to take advantage of structure in a square matrix $A$ in order to speed up the approximate computation of matrix functions $f(A)$. One possible approach is to consider algebras of structured matrices as for instance hierarchical matrices in HODLR or HSS format which are closed under addition, multiplication with a scalar and inversion, and contain the identity, see for instance the recent paper \cite{MRK20}
%{\footnote{S. Massei, Leonardo  Robol, D. Kressner, hm-toolbox:  Matlab software for HODLR and HSS matrices, arXiv:1909.07909v3 (2020).}
and the references therein. The hierarchical rank $k$ of $A\in \mathbb R^{n\times n}$ gives a complexity parameter and, roughly speaking, the above matrix operations can be carried out in $\mathcal O(k^2n)$ or $\mathcal O (k^2n \log(n))$ operations, see \cite[Table in \S 4.3]{MRK20}.
Replacing $f$ by a rational function $r$ of type $[m-1|m]$ and evaluating $r(A)$ within the hierarchical algebra following the operations described in \S \ref{sec3.1}, \S\ref{sec3.2}, or \S\ref{sec3.3}, requires to compute about $2m$ shifted inverses, and in the worst case might increase the hierarchical rank from $k$ (for $A$) to $2mk$ (for $r(A)$). It is therefore important to know that the above operations are combined with a compression procedure of the same complexity, in order to keep the hierarchical rank as small as possible.

Another structural property allowing for the approximate computation of $f(A)$ in low complexity %that can assert a low-rank $f(A)$
is {\itshape displacement structure}, which was discovered independently by Heinig and Rost \cite{HR84,Heinig1989}, and in a series of works by Kailath and others, e.g.,~\cite{Kail79,Kail91,Kail95,Pan93}.  Our inspiration to use this kind of structure was a work of Kressner and Luce~\cite{Kress18}, who used displacement structure for the fast computation of the matrix exponential for a Toeplitz matrix.

The displacement operator that we will use here is the so-called
{\itshape Sylvester} displacement operator $S: \mathbb C^{n \times n}
\rightarrow \mathbb C^{n \times n}$, defined by
\begin{equation} \label{Displacement_def}
    S(A) = Z_{1} A - A Z_{-1} ,\quad \text{where} \;
Z_{\theta}=\begin{pmatrix}
0 & \ldots & \ldots & 0 & \theta \\
1 & \ddots & & & 0 \\
0 & \ddots & \ddots &  & \vdots \\
\vdots & \ddots & \ddots & \ddots & \vdots \\
0 & \ldots & 0 & 1 & 0
\end{pmatrix}.
\end{equation}
The number $\tau=\tau(A):=\rank(S(A))$ is called the {\itshape displacement rank of $A$}, and any pair of matrices $G, B \in \mathbb C^{n\times \tau}$ such that $S(A) = GB^*$ is called {\itshape a generator} for $S(A)$. It is readily verified that the displacement rank of a Toeplitz matrix \eqref{toeplitz} is at most two, and the same is true for the shifted matrix $A - zI$ (being a Toeplitz matrix itself), and also for the resolvent $(zI - A)^{-1}$.  It is customary to say that a matrix $A \in \mathbb C^{n \times n}$ is {\itshape Toeplitz-like}, if its displacement rank is ``small'', i.e., if $\tau = \rank(S(A)) \ll n$.

We will now briefly review how the displacement rank behaves under elementary operations.  Consider two (Toeplitz-like) matrices $A_1, A_2$ having displacement ranks $\tau_1$ and $\tau_2$, respectively.
\begin{itemize}
    \item The identity matrix has displacement rank one.
    \item For a scalar $0 \neq s\in \mathbb C$ we have $\tau(s A_1) = \tau_1$.
    \item For sums of Toeplitz-like matrices we have $\tau(A_1 + A_2) \le \tau_1 + \tau_2$.
    \item For products of Toeplitz-like matrices we have $\tau(A_1 A_2) \le \tau_1 + \tau_2 + 1$.
\end{itemize}
It is interesting to note that for many operations it is possible to efficiently compute a generator of the result directly from the generators of the operands.  For example, for a Toeplitz-like matrix $A$ with generator $(G, B)$ one finds that $S(A^{-*}) = -(Z_1 A^{-*} B)(Z^*_{-1} A^{-1} G)^*$, implying that a generator for $A^{-*}$ can be obtained by solving two linear system with $A^*$ and $A$, respectively (and that the displacement rank does not increase here either).  An exhaustive description and discussion of operations among Toeplitz-like matrices, and the effects on the displacement rank is given in~\cite{these_Bisch}.

In order to understand the arithmetic complexity of evaluating a rational function $r(A)$ of a Toeplitz-like matrix $A$, we will need to evaluate matrix-vector products with $A$, and solve linear systems of equations with $A$.  A matrix-vector product with a $A$ (or $A^*$) can be computed in $\mathcal O(\tau n \log(n)$), using the FFT.  The currently best asymptotic complexity for solving linear systems of equations with a Toeplitz-like matrix is in $\mathcal O(\tau^2 n \log^2(n))$\footnote{This algorithm is based on transforming via FFT a Toeplitz-like  matrix into a Cauchy-like matrix being a hierarchical matrix of hierarchical rank $k=\mathcal O(\tau \log(n))$, and then to use the hierarchical solver of {\tt hm-toolbox}, see \cite[\S 5.1 and Table in \S 3.4]{MRK20} and \cite{Xia12}.
%(Ajouter reference originale J. Xia, Y. Xi, and M. Gu.  A superfast structured solver for Toeplitz linear systems via randomized sampling. SIAM J. Matrix Anal. Appl., 33(3):837–858, 2012.).
}, plus a compression procedure based on a singular value decomposition of $S(A)$ in complexity $\mathcal O (\tau^2 n)$ where we drop contributions from singular values below the machine precision.  Again, we refer to the thesis~\cite{these_Bisch}, where the details and further references are spelled out.

In our numerical experiments reported in \S\ref{sec_experiences}, we use the ``TLComp'' Matlab toolbox\footnote{\url{https://github.com/rluce/tlcomp}}, which offers an automatic dispatch of operations with Toeplitz-like matrices to implementations that work directly with generators (as opposed to the full, unstructured matrix).  Note that in contrast to the best possible asymptotic complexity pointed out above, this toolbox solves linear systems of equations using the GKO algorithm~\cite{GKO95}, which (only) has a complexity in $\mathcal O(\tau n^2)$.  Extensive numerical experiments have shown, however, that for the practical range of dimension $n \in [10^3,10^5]$ under consideration here, the classical GKO algorithm turns out to be much faster, which is why we stick to it in our numerical experiments.  Of course, we still phrase complexity results in Theorem~\ref{Thm_computation} below with respect to the better asymptotic bound.  This toolbox also allows the fast computation (or approximation) of various norms of Toeplitz-like matrices, and the reconstruction of the full matrix $A$ from its generators in (optimal) complexity $\mathcal O(\tau n^2)$.  A complete description of ``TLComp'' and its functionality will be subject of a future publication.

In all our experiments we considered only real symmetric Toeplitz and Toeplitz-like matrices with spectrum in a given interval $[c,d]$, where the different operations even simplify, and we have the error estimates
$$
       \| f(A) - r_m(A) \| \leq \| f - r_m \|_{L^\infty([c,d])} , \quad
       \| I - r_m(A)f(A)^{-1} \| \leq \| \frac{f - r_m}{f} \|_{L^\infty([c,d])} .
$$
Notice that a priori there is no reason to expect that $f(A)$ has a small displacement rank, even for our special case of a Markov function $f=f^{[\mu]}$. However, for a rational function $r_m^{[\mu]}$ of type $[m-1|m]$, the displacement rank of $r^{[\mu]}_m(A)$ is at most $\mathcal O(m (\tau+1))$. Also, we will always choose rational interpolants $r^{[\mu]}_m$ with interpolation nodes given by \eqref{optimal_nodes}, which according to \eqref{optimal_nodes2} allows us to achieve precision $\delta>0$ for $m=\mathcal O(\log(1/\delta))$, with the hidden constant depending only on the cross ratio of $\alpha,\beta,c,d$. Indeed, in all experiments reported in \S\ref{sec_experiences}, the displacement rank of $r_m^{[\mu]}(A)$ (after compression) grows at most linearly with $m$, and sometimes even less if the precision increases.

We summarize our findings in the following theorem, in its proof we also discuss the different implementations of our three approaches of \S 3 in the algebra of Toeplitz-like matrices.

\begin{Thm}\label{Thm_computation}
    Let $ f^{[\mu]}:z\mapsto \inalbe\frac{d\mu(x)}{z-x} $ be a Markov function with $\supp(\mu)\subset [\alpha,\beta]$, $\delta>0$, $m \geq 1$, $A\in \mathbb R^{n\times n}$ a symmetric Toeplitz-like matrix with displacement rank $\tau$, and spectrum included in the real interval $[c,d]$, with $c>\beta$. Furthermore, denote by $r^{[\mu]}_m$ the rational interpolant of $f^{[\mu]}$ of type $[m-1|m]$  (in exact arithmetic) at the interpolation nodes \eqref{optimal_nodes} (depending only on $m$ and $\alpha,\beta,c,d$). Then for $m=\mathcal O(\log(1/\delta))$, $r^{[\mu]}_m(A)$ of displacement rank $\mathcal O(m \tau)$ is an approximation of $f^{[\mu]}(A)$ of (relative) precision $\mathcal O(\delta)$.
    \\
    Furthermore, computing the generators of $r_m^{[\mu]}(A)$ through the techniques of \S\ref{sec3.1}, \S\ref{sec3.2}, and \S\ref{sec3.3} within the algebra of Toeplitz-like matrices has complexity
    $\mathcal O (m \tau^3 n \log^2(n))$ for the first two approaches, and
    $\mathcal O (m^2 \tau^3 n \log^2(n))$  for the Thiele continued fraction.
\end{Thm}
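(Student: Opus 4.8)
The claim splits into an approximation statement and a complexity statement, which I would prove separately. For the approximation statement the plan is to invoke Corollary~\ref{cor_a_priori} directly: since the interpolant uses the nodes \eqref{optimal_nodes}, we have $\|(f^{[\mu]}-r_m^{[\mu]})/f^{[\mu]}\|_{L^\infty([c,d])}\le 8\rho^{2m}/(1-2\rho^{2m})^2$ with $\rho=\exp(-1/\capa([\alpha,\beta],[c,d]))<1$. Taking $m\ge\tfrac{1}{2}\capa([\alpha,\beta],[c,d])\log(16/\delta)$, which is $\mathcal O(\log(1/\delta))$ with the hidden constant depending only on the cross ratio of $\alpha,\beta,c,d$, forces $2\rho^{2m}\le\delta/8$, so the relative error on $[c,d]$ is $\le 2\delta$; since $A$ is symmetric with $\sigma(A)\subset[c,d]$, the relative matrix error $\|I-r_m^{[\mu]}(A)f^{[\mu]}(A)^{-1}\|$ is bounded by that sup-norm, hence $\mathcal O(\delta)$. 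For the displacement rank I would use the partial fraction form \eqref{pfd}, $r_m^{[\mu]}(A)=\sum_{j=1}^m a_j(A-x_jI)^{-1}$: each $A-x_jI$ is Toeplitz-like with displacement rank $\le\tau+1$ (as $Z_1-Z_{-1}$ has rank one), forming the inverse does not raise the displacement rank, and $\tau$ is subadditive under matrix sums, so $\tau(r_m^{[\mu]}(A))=\mathcal O(m\tau)$.

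For the complexity statement the plan is to fix the cost model of \S\ref{sec4} — a matrix--vector product with a Toeplitz-like matrix of displacement rank $\tau$ in $\mathcal O(\tau n\log n)$, a linear solve with such a matrix in $\mathcal O(\tau^2 n\log^2 n)$, hence a generator of one resolvent $(A-xI)^{-1}$ (one solve for each of the $\mathcal O(\tau)$ generator columns) in $\mathcal O(\tau^3 n\log^2 n)$, and compression of a width-$w$ generator in $\mathcal O(w^2 n)$ — and then to traverse the three schemes of \S\ref{sec3}. Using $\tau(A_1+A_2)\le\tau_1+\tau_2$, $\tau(A_1A_2)\le\tau_1+\tau_2+1$ and the factored form of a rational function of $A$, every Toeplitz-like intermediate stays a rational function of $A$ of degree $\le m$ and thus keeps displacement rank $\mathcal O(m\tau)$; hence each compression encountered costs $\mathcal O(m^2\tau^2 n)$ and each purely scalar subproblem costs $\mathcal O(m^3)$, both dominated as $n\to\infty$ by the solve cost. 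For the partial fraction scheme of \S\ref{sec3.1} one computes the poles as the eigenvalues of the $m\times m$ Loewner pencil $\mathbb{L}_{s}-z\mathbb{L}$ and the residuals from the $2m\times m$ least squares problem ($\mathcal O(m^3)$, $n$-independent), then assembles $\sum_j a_j(A-x_jI)^{-1}$ from $m$ resolvent generators with poles $x_j\in(\alpha,\beta)$ safely separated from $\sigma(A)$, at cost $\mathcal O(m\tau^3 n\log^2 n)$. For the barycentric scheme of \S\ref{sec3.2} one additionally forms $r_m^{[\mu]}(A)=N(A)D(A)^{-1}$ from $m+1$ shared resolvent generators and a structured quotient; since $D(A)$ is again a rational function of $A$ of degree $\le m$, its inverse is obtained — after an $\mathcal O(m^3)$ generalized eigenproblem for the zeros of the barycentric denominator — from $\mathcal O(m)$ further resolvents of $A$, so the $n$-dependent cost is again $\mathcal O(m\tau^3 n\log^2 n)$. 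For the Thiele scheme of \S\ref{sec3.3} the reciprocal differences \eqref{T_diff} cost $\mathcal O(m^2)$, and the backward recursion \eqref{T_evaluation}, $R_M^{(j)}(A)=f_j^{(j)}I+(A-z_jI)R_M^{(j+1)}(A)^{-1}$ for $j=M-1,\dots,1$ followed by $r_m^{[\mu]}(A)=R_M^{(1)}(A)^{-1}$, runs over $M-1=\mathcal O(m)$ levels, each requiring one inversion of the Toeplitz-like matrix $R_M^{(j+1)}(A)$ — a rational function of $A$ of degree $\le m$, invertible at cost $\mathcal O(m\tau^3 n\log^2 n)$ — plus a product with $A-z_jI$ and a compression; summing over the $\mathcal O(m)$ levels gives $\mathcal O(m^2\tau^3 n\log^2 n)$.

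The delicate part is the complexity bookkeeping, not the approximation estimate. One has to verify that no Toeplitz-like intermediate in any of the three schemes ever exceeds displacement rank $\mathcal O(m\tau)$ — so that neither the fast solves nor the SVD-based compressions blow up — and that the $\mathcal O(m^3)$ scalar subproblems and the $\mathcal O(m^2\tau^2 n)$ compressions are genuinely dominated, for $n\to\infty$, by the $n$-dependent solve cost; and, for the Thiele scheme, one must organize each of the $\mathcal O(m)$ nested inversions so that its $n$-dependent cost is only $\mathcal O(m\tau^3 n\log^2 n)$ and not a higher power of $m$, which is exactly what produces one extra factor $m$ relative to the first two schemes. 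The full catalogue of operations on Toeplitz-like matrices and the associated cost and displacement-rank estimates underlying all of this are carried out in detail in \cite{these_Bisch}.
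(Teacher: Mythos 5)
Your proposal is correct and follows essentially the same route as the paper: the first part is read off from Corollary~\ref{cor_a_priori} and the displacement-rank arithmetic of \S\ref{sec4}, and the complexity part is obtained by counting, for each of the three schemes, the number of Toeplitz-like solves at cost $\mathcal O(\tau^2 n\log^2 n)$ each, with the extra factor $m$ for Thiele coming from the growing displacement rank of the nested intermediates $R^{(j+1)}_{2m}(A)$. The only (harmless) deviation is in the barycentric case, where you invert $Q(A)$ by re-expanding it into $\mathcal O(m)$ resolvents of $A$ after an auxiliary $\mathcal O(m^3)$ eigenproblem, whereas the paper inverts the assembled Toeplitz-like matrix $Q(A)$ directly within the structured algebra and asserts the cost is a constant multiple of the partial-fraction cost.
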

\begin{proof}
    It only remains to show the last part, where we ignore the cost of computing poles/residuals or other parameters which is of complexity $O(m^3)$. The partial fraction decomposition \eqref{pfd} in \S\ref{sec3.1} seems to be the easiest approach to compute the generators of $r^{[\mu]}_m(A)$: we just have to compute the generators of each resolvent $(A- x_j I)^{-1}$ (with displacement rank bounded by $\tau+1$), combine and compress. Here the essential work is to compute $m$ times the generator of a resolvent, by solving at most $2m(\tau+1)$ systems of Toeplitz-like matrices, leading to the claimed complexity.

    The barycentric representation \S\ref{sec3.2}  requires to compute separately the generators of
    $$
         P(A) = \sum_{j=0}^{m} f(t_j) \beta_{j} (A-t_jI)^{-1}, \, \,
         Q(A) = \sum_{j=0}^{m} \beta_{j} (A-t_jI)^{-1}
    $$
    of displacement rank at most $(m+1)(\tau+1)$, then those of $Q(A)^{-1}$ and finally those of $P(A)Q(A)^{-1}$ with a cost being about 4 times the one discussed before.

    Finally, for insuring stability in \S\ref{sec3.3}, we use the backward evaluation of $R_{2m}^{(1)}(A)$ via \eqref{T_evaluation}, leading to $R_{2m}^{(2m)}(A)=f_{2m}^{(2m)} I $, and $R_{2m}^{(j)}(A) = f_{j}^{(j)} I +
    (A-z_j I) R^{(j+1)}_{2m}(A)^{-1}$ for $j=2m-1,2m-2,..., 1$. Here the cost is dominated by finding the generators of the inverse of $R^{(j+1)}_{2m}(A)$, of displacement rank at most $(\tau+1)(2m+2-j)/2$.
\end{proof}

    To summarize, in \S\ref{sec4} we have reported about how to efficiently evaluate $r^{[\mu]}_m(A)$ for a Toeplitz matrix $A\in \mathbb R^{n\times n}$ in complexity $\mathcal O(n \log^2(n))$ (with the hidden constant depending on $m$ and the desired precision), by exploiting the additional Toeplitz structure. This part has been strongly inspired by previous work of Kressner and Luce \cite{Kress18}, see also \cite{MRK20}, who exploited the theory of small displacement rank and Toeplitz-like structured matrices. Let us illustrate these findings by some numerical experiments.
    %, see for instance \cite{Heinig84,Kail79,Kail91}.

\section{Numerical experiments}\label{sec_experiences}

In this section, we illustrate our findings for Toeplitz matrices by reporting several numerical experiments. In all figures to follow, for a fixed symmetric positive definite Toeplitz matrix $A$ of order $500$ with extremal eigenvalues $\lambda_{\min},\lambda_{\max}$ and a fixed Markov function $f^{[\mu]}$ recalled in the caption, we present four different cases (displayed from the left to the right)
\begin{equation} \label{4experiences}
   \begin{array}{ll}
    (i) & \mbox{Toeplitz-like arithmetic, $[c,d]=[\lambda_{\min},\lambda_{\max}]$,}
\\
    (ii) & \mbox{Toeplitz-like arithmetic, $[c,d]=[\frac{1}{2}\lambda_{\min},2\lambda_{\max}]$,}
\\
    (iii) & \mbox{without Toeplitz-like arithmetic, $[c,d]$ as in $(i)$,}
\\
    (iv) & \mbox{$[c,d]$ as in $(i)$, diagonal matrix of order $500$ with entries being cosine points in $[c,d]$.}
\end{array}
\end{equation}
Hence, the impact of an enlarged spectral interval can be measured in comparing cases $(i)$ and $(ii)$, the impact of our particular Toeplitz-like arithmetic by comparing cases $(i)$ and $(iii)$, and finally the impact of a matrix-valued argument instead of a scalar argument (again due to finite precision arithmetic) by comparing the cases $(i)--(iii)$ with $(iv)$.  For each of the four cases, we show the relative error of the same rational interpolant of type $[m-1|m]$ with the quasi-optimal interpolation points of \eqref{optimal_nodes} depending on $m$ (black solid line). Due to finite precision arithmetic, we obtain three error curves: the partial fraction decomposition of \S\ref{sec3.1} (triangle markers), the barycentric representation of \S\ref{sec3.2} (star markers) and finally the Thiele interpolating continued fraction of \S\ref{sec3.3} (circle markers). As in Figure~\ref{fig_representation1}, we use a marker for index $m$ if \eqref{eq_stopping} holds, in other words, this index is rejected by our stopping criterion of Remark~\ref{rem_stopping}.
The forth graph (red dashed) gives the a priori upper bound \eqref{optimal_nodes2} of Corollary~\ref{cor_a_priori}. Notice that computing the relative error requires to evaluate $f(A)$ via built-in matrix functions of Matlab (such as {\tt logm()}), which explains why we consider only matrices of moderate size.

\begin{Ex}\label{Ex5.1}
   We present a first numerical example where we approach $\log(A)$, with $A$ being a symmetric positive definite Toeplitz matrix of order $500$, with extremal eigenvalues $\lambda_{\min}=25,\lambda_{\max}=139.2$ and a condition number $5.568$, created by using random generators and shifting the spectrum.  Notice that $f(z)=\log(z)/(z-1)$ is a Markov function with $\alpha=-\infty$ and $\beta=0$. Denoting by $r_m$ a rational interpolant of type $[m-1|m]$ of $f$, we will thus approach $\log(A)$ by $(A-I)r_m(A)$, leading to the same relative error as approaching $f(A)$ by $r_m(A)$. %In order to display the relative errors in each of the four cases of Figure~\ref{fig6}, we have computed explicitly $\log(A)$ via the Matlab function {\tt logm} (and thus we deal with matrices of moderate size).
   The four different cases (from left to the right) are those explained above in \eqref{4experiences}.
    Some observations are in place:
    \begin{enumerate}
    \item[(a)] In any of the four cases and 3 methods, the stopping criterion of Remark~\ref{rem_stopping} works perfectly well: all accepted indices give errors below our a priori bound being nicely decreasing for increasing $m$. Also, all rejected indices correspond to errors above our a priori bound, and these errors are never much smaller than the error at the last accepted index.
    \item[(b)] In the case $(iv)$ of diagonal matrices $A$ (or, equivalently, for scalar arguments), all three methods for evaluating $r_m(A)$ are equivalent, this confirms similar observations in Figure~\ref{fig_representation1}.
    \item[(c)] In any of the cases $(i)-(iii)$, that is, for full matrices $A$, the barycentric representation of $r_m(A)$ leads to much larger errors, in particular if one uses Toeplitz-like arithmetic.
    \item[(d)] The partial fraction decomposition and the Thiele continued fraction approaches have a similar behavior, and lead to a small error of order $10^{-12}$.
    \item[(e)] Enlarging the spectral interval as in case $(ii)$ does not lead to a smaller error, but might require to compute interpolants of higher degree for achieving the same error.
   \item[(f)] For measuring the complexity, it is also interesting to observe that the displacement rank of $r_m(A)$ in the cases $(i)$ and $(ii)$ is increasing in $m$, it first increases linearly, and then stabilizes around $22$ (for Thiele and partial fractions, the double for barycentric), once a good precision is reached.
    \end{enumerate}
%    notice that all error curves cross the a priori upper bound due to finite precision. Comparing the three approaches of \S 3, it seems that the partial fraction approach always fails first, and that the Thiele continued fraction approach nearly always gives the best results, here a relative precision between $10^{-13}$ and $10^{-15}$, depending on the case. We notice that the barycentric approach which in \S 3 gave satisfactory results in the case of scalar arguments (or, what amounts the same, for diagonal matrices as displaced in case $(iv)$ on the right), might also fail for Toeplitz matrix arguments.
%   We also observe that the error curves get more erratic for Toeplitz-like arithmetic compared to the case $(iii)$ where we disregard the structure, once they cross the a priori upper bound. A phenomenon we observed for other matrices is that enlarging the spectral interval (case (ii)) sometimes smoothes this erratic behavior, but does non always lead to a smaller final error. Notice also that our residual stopping criteria of Remark~\ref{rem_stopping} works surprisingly well, no matter how we computed $r_m(A)$: on each error curve, the error is displayed by a marker if the condition \eqref{eq_stopping} is satisfied. Thus, following Remark~\ref{rem_stopping}, for example for the case $(i)$, we should stop at $m=8$ for Thiele, and $m=6$ for the other two methods, each time the index where the smallest error is reached.
%
\end{Ex}

\begin{figure}[!h]
	\begin{center}
		\includegraphics[width=0.95\linewidth]{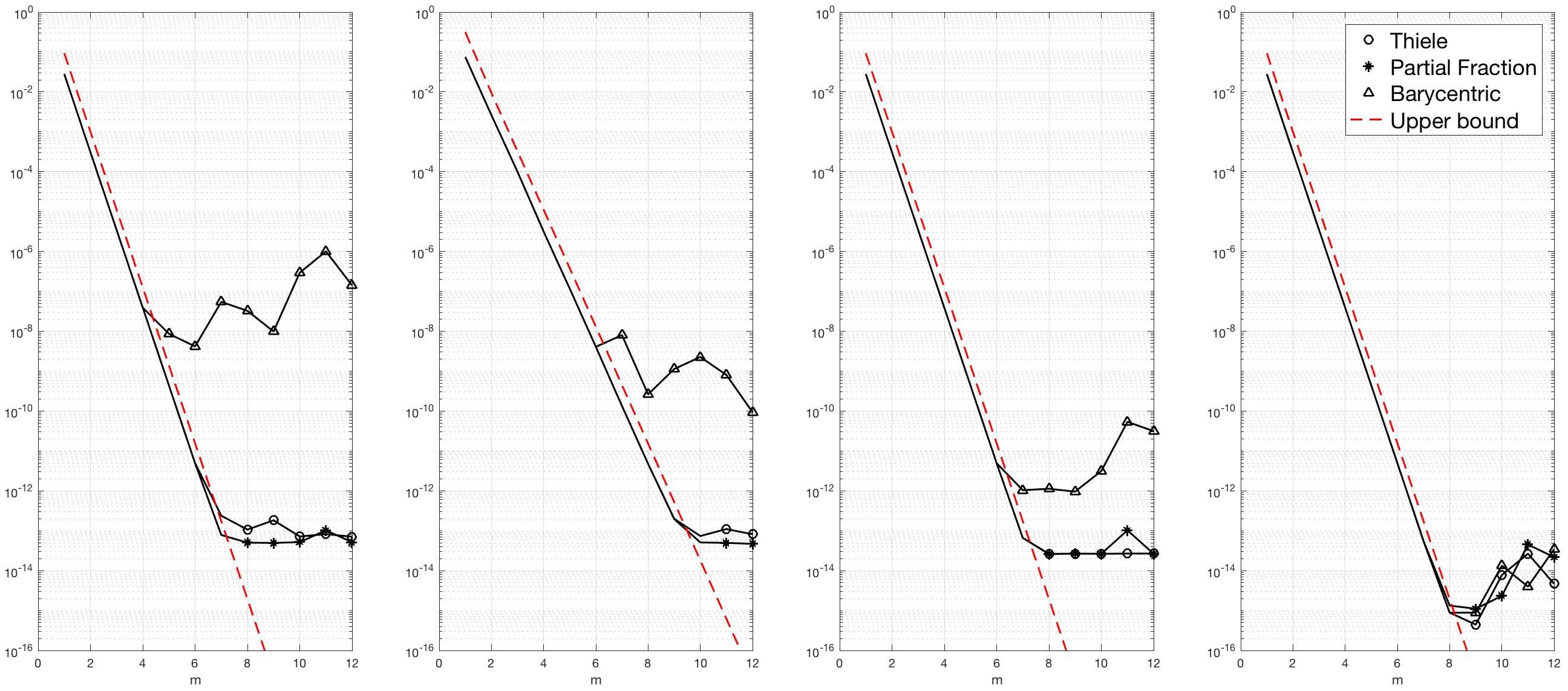}
    \end{center}
		\caption{Relative errors for approaching $f(A)$ for the Markov function $f(z)=\log(z)/(z-1)$, and a symmetric positive definite Toeplitz matrix of order $500$, with extremal eigenvalues $\lambda_{\min}=25,\lambda_{\max}=139.2$ and a condition number $5.568$. Further explanations on the legend and the four different cases $(i)-(iv)$ (displayed from the left to the right) are given in the first paragraph of \S\ref{sec_experiences}.
%We present four different experiments (displayed from the left to the right) denoted by $(i)-(iv)$, and  explained in \eqref{4experiences}. For each experiment, we display (black solid lines) the relative error at our matrix argument of the rational interpolants $r_m$ as a function of the denominator degree $m$, and interpolation points \eqref{optimal_nodes}. Here we show the results of the three approaches of \S 3, namely partial fraction decomposition (marked by stars), the barycentric representation (marked by triangles), and the Thiele continued fraction (marked by circles). We also displayed (red dashed line) the a priori upper bound \eqref{optimal_nodes2}. Notice that the error curves are nearly identical as long as they are below the a priori bound. On each error curve, the error is displayed by a marker if the condition \eqref{eq_stopping} of our stopping criteria of Remark~\ref{rem_stopping} is satisfied.
}\label{fig6}
\end{figure}

The observations (a)--(f) obtained in Example~\ref{Ex5.1} for a particular matrix can also be made for other symmetric positive definite Toeplitz matrices, as long as the condition number remains modest, say, below $10$. Notice that, with $\alpha=-\infty$ and $\beta=0$, the condition number of $A$ equals the cross ratio in \eqref{eq_choice_T} (for the cases $(i)$, $(iii)$ and $(iv)$), and hence determines the asymptotic rate of convergence \eqref{optimal_nodes2}, essentially the slope of our a priori upper bound.

\begin{figure}[!h]
	\begin{center}
		\includegraphics[width=1.03\linewidth]{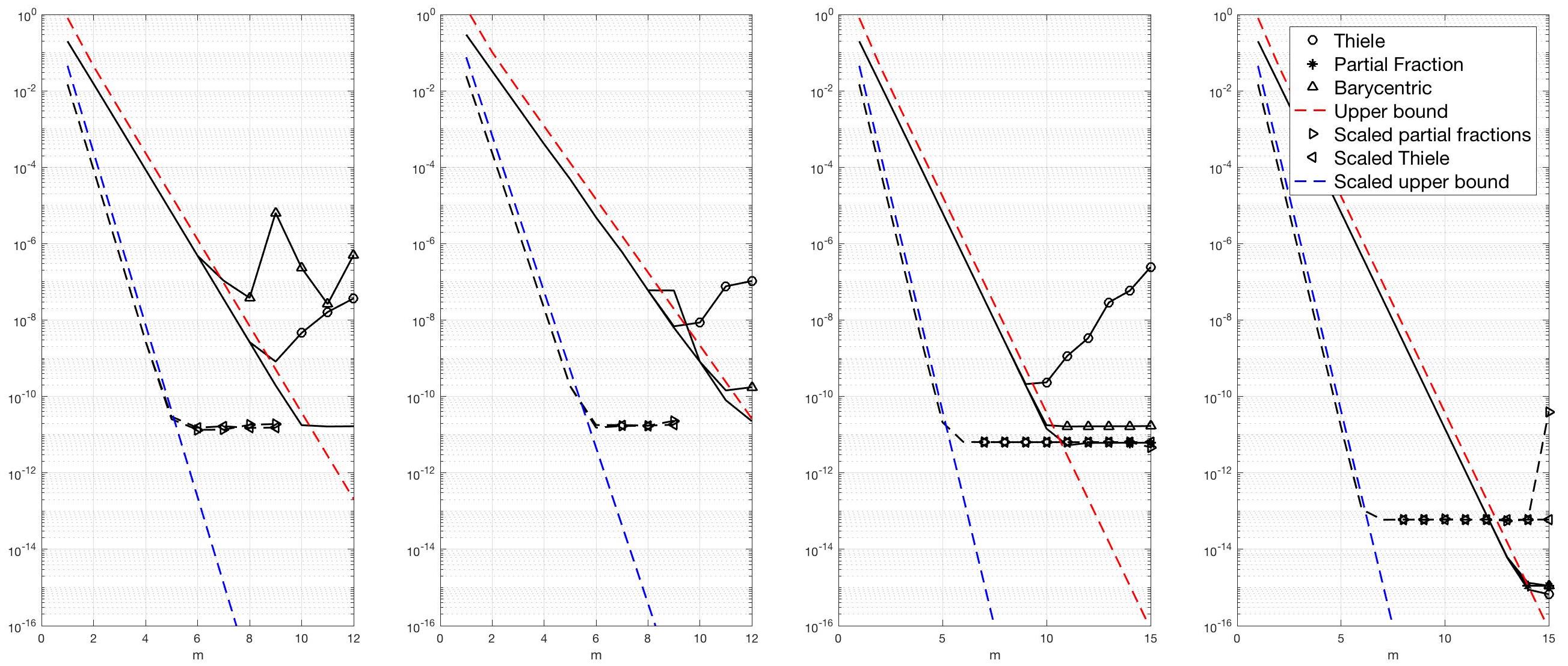}
		\\~\hspace{+0.0cm}
        \includegraphics[width=1.023\linewidth]{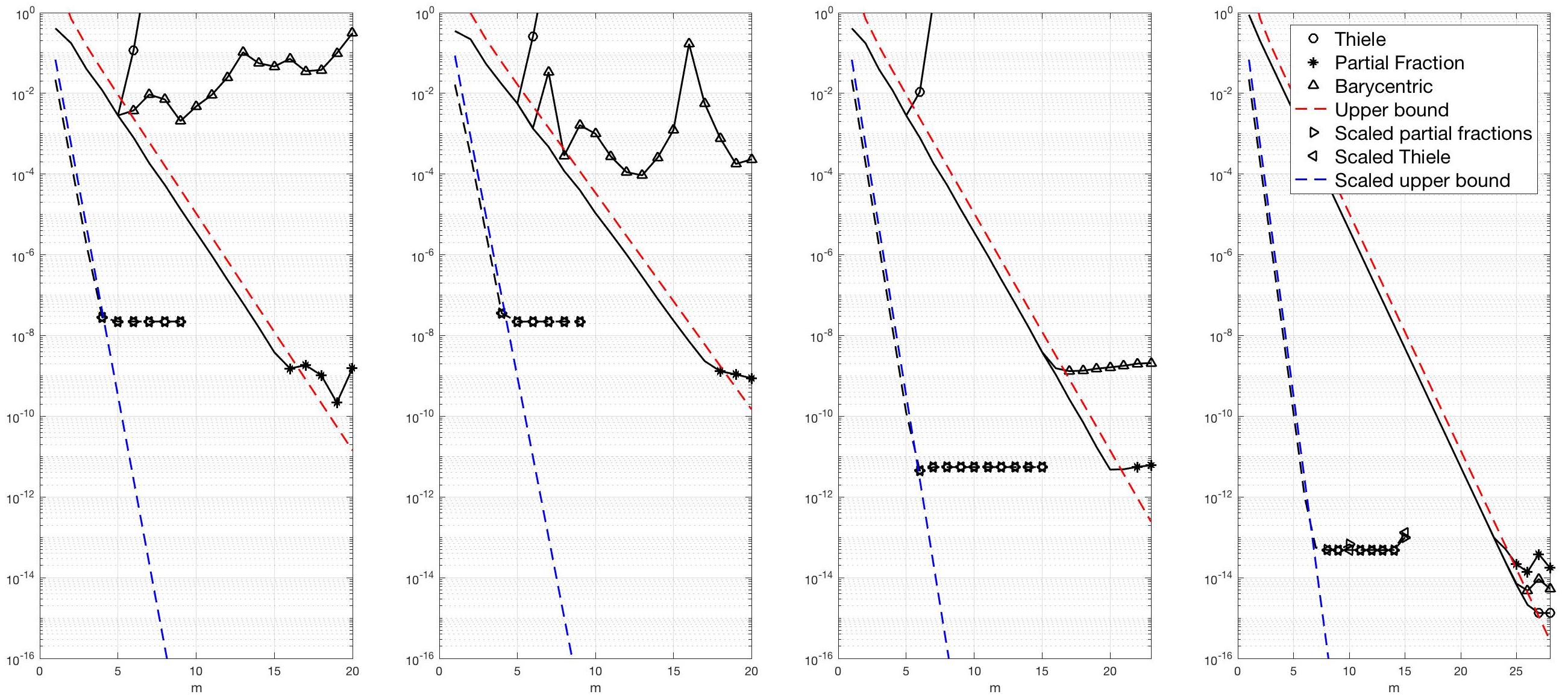}
    \end{center}
		\caption{Relative errors for approaching $f(A)$ for the Markov function $f(z)=\log(z)/(z-1)$, and two symmetric positive definite Toeplitz matrix of order $500$, with extremal eigenvalues $\lambda_{\min}=0.918,\lambda_{\max}=111.7$ and condition number $121.7$ for the first matrix (on the top), and $\lambda_{\min}=0.001,\lambda_{\max}=135$ and condition number $1.35 \cdot 10^5$ for the second matrix (on the bottom).
%We present four different experiments (displayed from the left to the right) denoted by $(i)-(iv)$, and  explained in \eqref{4experiences}. For each experiment, we display (black solid lines) the relative error at our matrix argument of the rational interpolants $r_m$ as a function of the denominator degree $m$, and interpolation points \eqref{optimal_nodes}. Here we show the results of the three approaches of \S 3, namely partial fraction decomposition (marked by stars), the barycentric representation (marked by triangles), and the Thiele continued fraction (marked by circles). We also displayed (red dashed line) the a priori upper bound \eqref{optimal_nodes2}. Notice that the error curves are nearly identical as long as they are below the a priori bound. On each error curve, the error is displayed by a marker if the condition \eqref{eq_stopping} of our stopping criteria of Remark~\ref{rem_stopping} is satisfied.
}\label{fig7}
\end{figure}

%  \begin{figure}
%  \begin{tabbing}
%  ~~~~ ~~~~  \= Given symmetric matrix $B$ with spectrum in $[c,d]$ and parameters $\mu_0,\mu_1,...$
%  \\ \> Initialize $M_0 = X_0 = B$ ,
%  \\ \> For \= $k=0,1,2,...$ until $\| I- M_k \| < tol$ :
%  \\ \> \> $M_{k+1} = \frac{1}{4} \Bigl( 2I + \mu_k^2 M_k + \frac{1}{\mu_k^2} M_k^{-1} \Bigr)$,
%           $X_{k+1}=\frac{1}{2} \mu_k (I + \mu_k^{-2} M_k^{-1}) X_k$.
%  \\ \> Return $X_k$ approximation of $B^{1/2}$
%  \end{tabbing}
%  \caption{Product form of the scaled DB iteration for approximating the matrix square root $B^{1/2}$.}\label{fig_Newton}
%  \end{figure}

However, observation (d) is no longer true for condition numbers larger than $10$: we present in Figure~\ref{fig7} two other examples for $\log(A)$ and two symmetric positive definite Toeplitz matrices $A$, with condition number $121.7$ (on the top) and $1.35 \cdot 10^5$ (on the bottom).\footnote{Further explanations on Figure~\ref{fig7} are given in Example~\ref{Ex5.2} below.} Here not only the barycentric representation but also the Thiele continued fraction approach fails completely to give acceptable relative errors, in particular for the two cases $(i)$ and $(ii)$ of Toeplitz-like arithmetic. In contrast, the partial fraction decomposition gives a relative error of order $10^{-11}$ for the top case, and $10^{-9}$ for the bottom case, which probably is acceptable for most applications.

An alternative to deal with ill-conditioned matrices $A$ is the classical inverse scaling and squaring technique, which can be applied for the logarithm and for the fractional power $x \mapsto x^\gamma$, $\gamma\in \mathbb R$,
  \begin{equation} \label{eq_sas}
        \log(A) = 2^\ell \log ( A^{\frac{1}{2^\ell}} ) , \quad
        A^\gamma = \Bigl( ( A^{\frac{1}{2^\ell}} )^\gamma \Bigr)^{2^\ell},
\end{equation}
see for instance \cite[Chapter 11.5]{higham2008} and \cite{Higham13}. In other words, we start by computing $\ell$ square roots $A_0=A$, and $A_j=(A_{j-1})^{1/2}$ for $j=1,...,\ell$, with the integer $\ell$ chosen such that
\begin{equation} \label{choice_ell}
  \mbox{cond}(A^{\frac{1}{2^\ell}})\leq (d/c)^{\frac{1}{2^\ell}} \leq 10.
\end{equation}
Here each square root is obtained by a scaled Newton method, and more precisely the product form of the scaled DB iteration\footnote{In the original formulation \cite{Denman76}, Denman and Beavers gave a coupled two-term recurrence for $X_k$ and $Y_k:=A^{-1} X_k$. The product form was obtained later in \cite{Cheng01}, where $Y_k$ is replaced by $M_k=X_kY_k$. These authors suggest the recurrence $X_{k+1}=\frac{1}{2} \mu_k X_k(I + \mu_k^{-2} M_k^{-1})$, but our recurrence seems to be more suitable in case where the matrices do no longer commute due to finite precision arithmetic. It seems that this slight modification has no impact on the analysis of stability and limiting accuracy, and even hardly no impact on the (relative) error.} given in Algorithm~\ref{fig_Newton}.
\begin{algorithm}[h!]
\SetAlgoLined
\KwResult{Return $X_k$ approximation of $B^{1/2}$}
\Begin{ $M_0 = X_0 = B$, $k=0$\;
 \While{$\| I- M_k \| > tol$}{
  $M_{k+1} = \frac{1}{4} \Bigl( 2I + \mu_k^2 M_k + \frac{1}{\mu_k^2} M_k^{-1} \Bigr)$\;
  $X_{k+1}=\frac{1}{2} \mu_k (I + \mu_k^{-2} M_k^{-1}) X_k$\;
  $k \leftarrow k+1$\;
 }}
 \caption{Product form of the scaled DB iteration for approximating the matrix square root $B^{1/2}$ for a symmetric positive definite matrix $B$ with spectrum in $[c,d]$ and parameters $\mu_0,\mu_1,...$.}\label{fig_Newton}
\end{algorithm}
Notice that $M_k-I=X_k B^{-1}X_k - I$ is what we have called in Corollary~\ref{cor_posteriori} the residual of the square root $B^{1/2}$. A suitable choice of parameters allows to speed up the first iterations of the Newton method. The following parameters have been suggested for scalar arguments by Rutish\"auser \cite{Rut63}, and discussed for matrix arguments by Beckermann \cite{beckermann2}, see also Zietak \& Zielinski \cite{ZielZietak07} and Byers \& Xu \cite{ByersRalph08},
  \begin{equation} \label{Newton_optimal}
        \mu_0 = \frac{1}{\sqrt[4]{c d}} , \quad \mu_1
        = \sqrt{\frac{2 \sqrt[4]{c d}}{\sqrt{c}+\sqrt{d}}}, \quad
        \mu_{k+1} = \sqrt{\frac{2\mu_k}{1+\mu_k^2}}
  \end{equation}
  for $k \geq 1$. For this choice of parameters (which are $\in(0,1)$ and tend quickly to $1$), one shows by recurrence that, for $k \geq 1$,
  $$
         \sigma(X_k B^{-1/2}) \subset [1,\frac{1}{\mu_k^2}], \quad \sigma(M_k) \subset [1,\frac{1}{\mu_k^4}] , \quad \| M_k - I\| \leq \frac{1-\mu_k^4}{\mu_k^4}.
  $$
  In order to keep stability and limiting accuracy shown for parameters $\mu_k=1$ in \cite[\S 6.4]{higham2008}, we suggest to proceed in two phases: in the first phase we apply Newton with parameters as in \eqref{Newton_optimal} until $\frac{1-\mu_K^4}{\mu_K^4}\leq 10^{-3}$ (for instance $K \leq 5$ for $\mbox{cond}(A)\leq 10^6$). For $k \geq K$, we then choose $\mu_k=1$ and thus $$
      \| M_{k+1} - I \| \leq \frac{1}{4} \| (M_{k} - I)^{2} M_k^{-1} \|
      \leq \frac{1}{3} \| M_{k} - I \|^{2}.
  $$ According to this quadratic convergence, $3$ Newton steps in the second phase should lead to high precision even in finite precision. After having computed $A^{\frac{1}{2^\ell}}$, we then evaluate rational interpolants of our particular Markov function $f$ at $A^{\frac{1}{2^\ell}}$, and finally perform the squaring or renormalization in order to approximate $f(A)$. In the cases $(i)-(ii)$ we implemented Newton and the squaring within the algebra of Toeplitz-like matrices, in order to speed up
  computation time. Notice that the cost of evaluating the interpolants for various values of $m$ is much higher than the cost for scaling or squaring, at least for $\mbox{cond}(A)\leq 10^{6}$ where we have to compute $\ell\leq 3$ square roots, and we have at most $8$ Newton steps for each square root.

\begin{Ex}
\label{Ex5.2}
   Reconsider the problem of approaching $\log(A)$ for the two symmetric positive definite Toeplitz matrices $A$ of Figure~\ref{fig7}. Beside the error curves described in the first paragraph of \S\ref{sec_experiences}, we have added in Figure~\ref{fig7} the a priori upper bound for the matrix $A^{\frac{1}{2^\ell}}$ (in blue dashed), as well as the relative error (black dashed) obtained by evaluating at $A^{\frac{1}{2^\ell}}$ the interpolant of $f(x)=\log(x)/(x-1)$ via a partial fraction decomposition (triangles pointing to the right), or via a Thiele continued fraction (triangles pointing to the left). As we have seen before, both approaches have a very similar behavior according to \eqref{choice_ell}. On the top, with a matrix of condition number $121.7$ (and hence $\ell=2$), we observe that this inverse scaling and squaring technique combined with Toeplitz-like arithmetic gives about the same relative error as partial fraction decomposition applied directly to $A$.
   On the bottom, with a matrix of condition number $1.35 \cdot 10^5$ and hence $\ell=3$, the conclusion is different: here our inverse scaling and squaring technique combined with Toeplitz-like arithmetic gives a relative error about 10 times larger than that for partial fraction decomposition applied directly to $A$.

   Detailed information about the rate of convergence and the final precision of each Newton iteration for computing $A_{j}=(A_{j-1})^{1/2}$ for $j=1,...,\ell$, $A_0=A$ are given in \cite{these_Bisch}, we only report here that the final relative error for scaled Thiele or scaled partial fraction decomposition is dominated by the relative error in computing the square root $A_1=A^{1/2}$, somehow as expected since this matrix has the worst condition number among the matrices $A_j$.
   Also, we tried other equivalent formulations of the Newton method, and obtained similar conclusions.
\end{Ex}

\begin{figure}[!h]
	\begin{center}
		\includegraphics[width=1.03\linewidth]{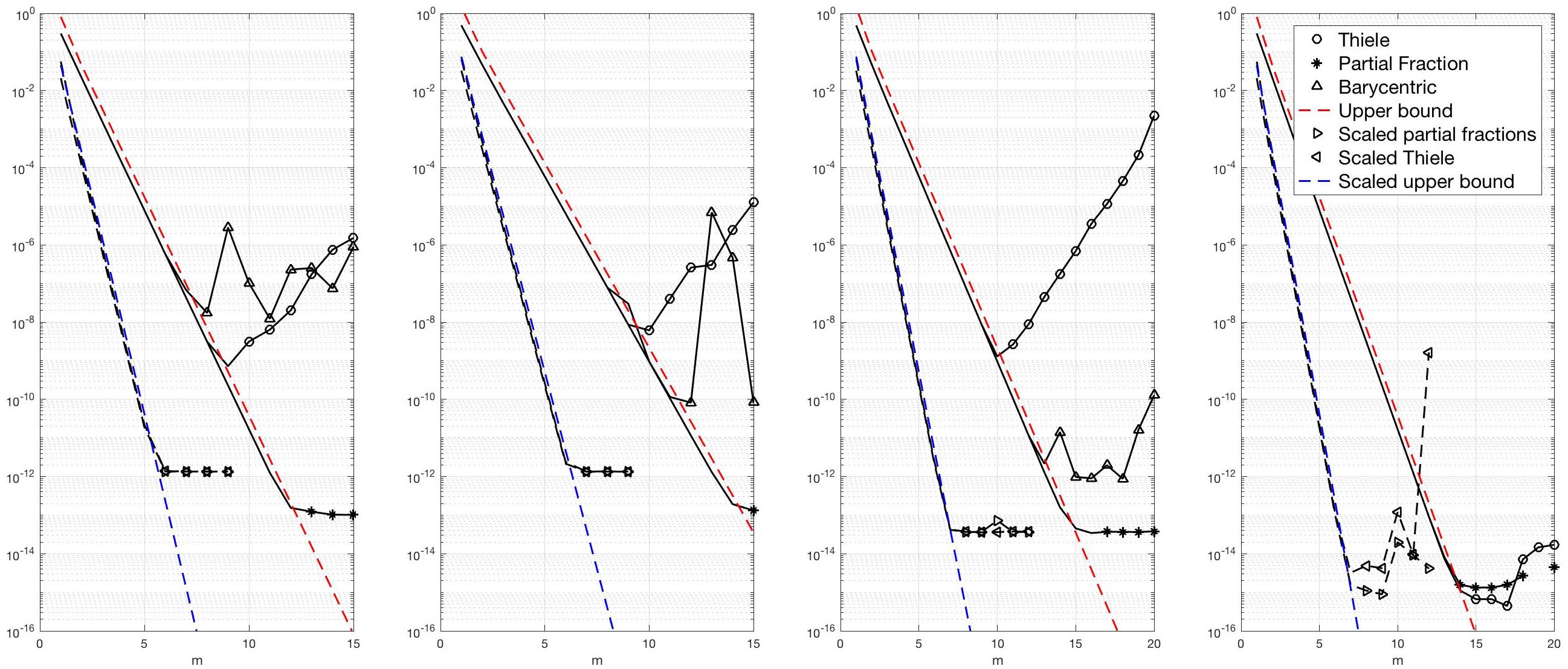}
		\\~\hspace{-0.2cm}
        \includegraphics[width=1.030\linewidth]{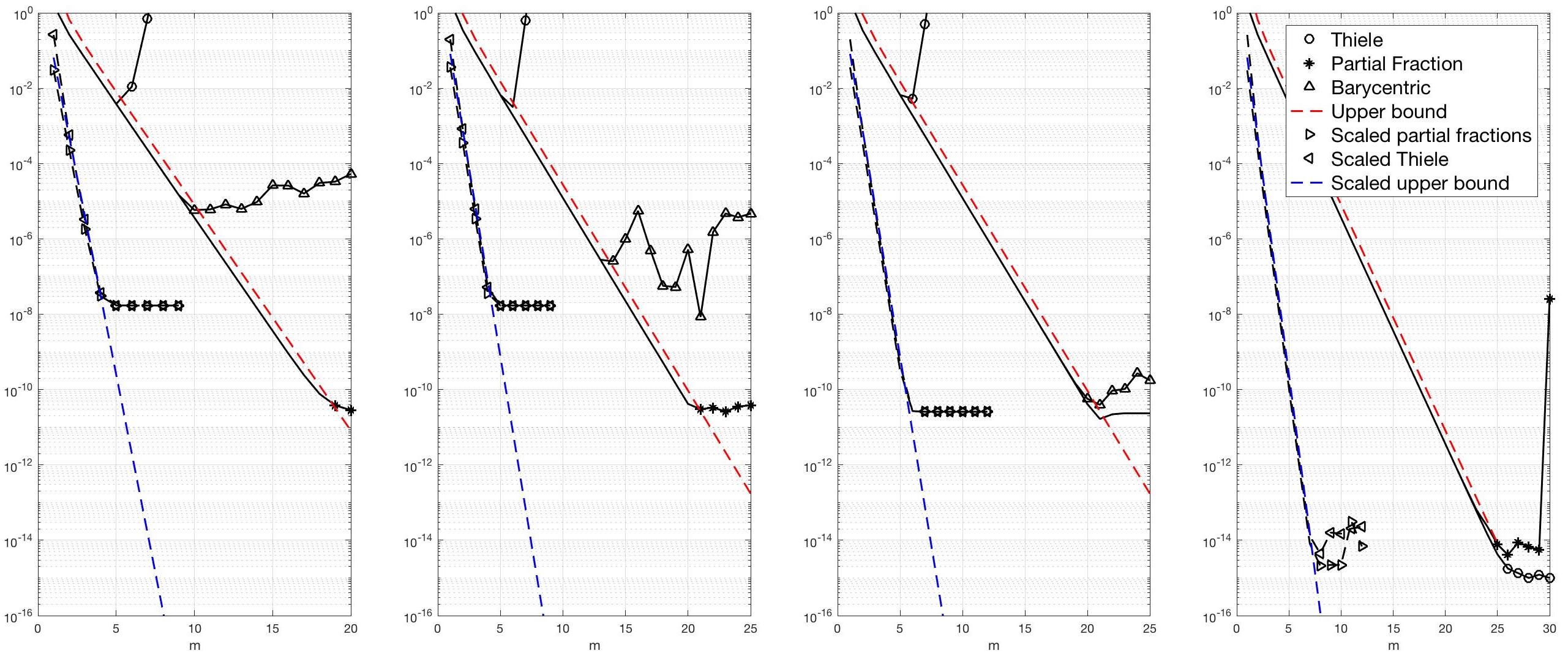}
    \end{center}
		\caption{Relative errors for approaching $A^{-1/3}$ for two symmetric positive definite Toeplitz matrices: on the top we find the same matrix as in Figure~\ref{fig7} of order $500$, with extremal eigenvalues $\lambda_{\min}=0.918,\lambda_{\max}=111.7$ and condition number $121.7$, and on the bottom the $1D$ discretized Laplacian of order $499$ (that is, the tridiagonal matrix containing $2$ on the main diagonal and $-1$ on the super- and subdiagonal), with extremal eigenvalues $\lambda_{\min}=3.95 \cdot 10^{-5},\lambda_{\max}=4.000$ and condition number $1.01 \cdot 10^5$.}\label{fig8}.
\end{figure}

\begin{Ex}
\label{Ex5.3}
  In our final example we study the fractional power $x\mapsto x^{-1/3}$ of two symmetric positive definite Toeplitz matrices, displayed in Figure~\ref{fig8}. We first modify slightly the approach described in \eqref{eq_sas}, since $x \mapsto x^\gamma$ is only a Markov function provided that $\gamma\in [-1,0)$. Also, preliminary numerical experiments not reported here indicate that squaring $\ell$ times seems to increase the relative error. For $\gamma\in \mathbb R$, we thus write $2^\ell \gamma = k + \gamma'$ with $k\in \mathbb Z$ and $\gamma'\in [-1,0)$, such that $A^\gamma=g(A^{\frac{1}{2^\ell}})(A^{\frac{1}{2^\ell}})^k$ with a Markov function $g(x)=x^{\gamma'}$, which is approached by $r_m(A^{\frac{1}{2^\ell}})(A^{\frac{1}{2^\ell}})^k$ with $r_m$ an interpolant of $g$. As in Example~\ref{Ex5.1}, we thus may apply our
  bounds for the relative error.\footnote{It is interesting to compare our findings to those in \cite{Higham13} where the authors approach $A^\gamma$ after scaling by evaluating Padé approximants at the single interpolation point $z_1=...=z_{2m}=1$ using Stieltjes continued fractions, somehow a confluent counterpart of our approach.}

  For the matrix on the top of Figure~\ref{fig8} with condition number $121.7$ we find that $\ell=2$ and hence $k=-1$, $\gamma'=-1/3$, whereas for the matrix on the bottom with condition number $1.01 \cdot 10^5$ we find $\ell=3$ and $k=-2$, $\gamma'=-2/3$.  Limiting ourselves to cases $(i)-(ii)$ using Toeplitz-like arithmetic, we obtain relative errors for unscaled Thiele of about $10^{-7}$ on the top and only $10^{-2}$ on the bottom. Both scaled Thiele or scaled partial fractions allow to achieve relative errors of about $10^{-12}$ on the top and $10^{-8}$ on the bottom.
  However,  the smallest relative error is obtained for an unscaled partial fraction decomposition, namely $10^{-13}$ on the top and $10^{-11}$ on the bottom.
\end{Ex}

%To summarize the numerical results of this section and others reported in \cite{these_Bisch}, for the logarithm and fractional powers, we get small relative errors by using rational interpolants of Markov functions, represented with help of the partial fraction decomposition of scaled

    To summarize, in \S\ref{sec_experiences} we have presented numerical results for two Markov functions and several symmetric positive definite Toeplitz matrices $A$, which show that our stopping criterion of Remark~\ref{rem_stopping} works surprisingly well in practice. Also, exploiting the Toeplitz structure gives an interesting complexity for large $n$, but in general also increases the error. If we exploit the Toeplitz structure, we should avoid the barycentric representation of \S\ref{sec3.2} and the Thiele interpolating continued fraction of \S\ref{sec3.3}, since the smallest relative error is obtained by the partial fraction decomposition of \S\ref{sec3.1}, especially for larger condition numbers of $A$. Finally, for the functions considered in \eqref{eq_sas}, one might also want to combine the partial fraction decomposition of \S\ref{sec3.1} with inverse scaling and squaring, which seems to increase the error, but has lower complexity since the involved rational functions have lower degree.

\section{Conclusion}\label{sec6}

    In this paper we presented a detailed study of how to efficiently and reliably approximate $f(A)$ by $r_m(A)$, with $f$ a Markov function, $A$ a symmetric Toeplitz matrix, and $r_m$ a suitable rational interpolant of $f$. Numerical evidence provided in Figure~\ref{fig_representation1} and case $(iv)$ on the right of Figures~\ref{fig6}--\ref{fig8} shows that, for scalar arguments $z$, we may nearly reach machine precision for the relative error using any of these three approaches discussed in \S\ref{sec3}. The picture changes however completely for the relative error $I-r_m(A) f(A)^{-1}$ evaluated at a Toeplitz matrix argument $A$. Here only the partial fraction decomposition of \S\ref{sec3.1} insures small errors, especially for larger condition numbers of $A$.

In this paper, we have hardly discussed the case of non necessarily symmetric (Toeplitz) matrices $A$, which is left as open question for further research. As explained in Remark~\ref{rem_FoV}, it is possible to construct rational approximants $r_m$, namely Faber images of rational interpolants, such that $f(A)-r_m(A)$ is bounded by $(1+\sqrt{2})$ times the maximum of $f-r_m$ on the field of values of $A$, which again can be related to the interpolation error of a Markov function on the unit disk. However, we expect such field-of-value estimates for non symmetric matrices $A$ not to be very sharp, and maybe other $K$-spectral sets of $A$ \cite[Section 107.2]{Badea13} would be more suitable. Also, it is not clear for us how to represent the rational function $r_m$, and what kind of stability results to expect for evaluating $r_m$ at a complex scalar argument, or at a general matrix $A$.

    Another direction of further research could be to work with variable precision in our compression procedure of computing the numerical displacement rank, which potentially could lead to a much more efficient implementation.

\bigskip

\noindent{\bf Acknowledgements.} The authors want to thank Stefan G\"uttel, Marcel Schweitzer and Leonid Knizhnerman for carefully reading a draft of this manuscript, and for their useful comments.

\bigskip

\noindent{\bf Conflict of interest.} Partial financial support was received from the Labex CEMPI (ANR-11-LABX-0007-01). The authors declare that they have no conflict of interest. 

\bibliography{bib_article}
\bibliographystyle{alpha}
\end{document}